% Geometric Convergence to an Extreme Limit Space with nonnegative scalar curvature
% 

\documentclass[12pt]{amsart}
\usepackage{bm}
\usepackage{amsmath, amsthm, amssymb}
\usepackage{graphicx}
\usepackage{mathbbol}
\usepackage{txfonts}
\usepackage[usenames]{color}
\usepackage{pgfplots}
\usepackage{comment}
\usepackage{enumerate}
\allowdisplaybreaks[4]
\usepackage{ circuitikz }

\newtheorem{thm}{Theorem}[section]
\newcommand{\bt}{\begin{thm}}
\newcommand{\et}{\end{thm}}

\newtheorem{cor}[thm]{Corollary}   %remember switch all {coro} to {cor}
\newcommand{\bc}{\begin{cor}}
\newcommand{\ec}{\end{cor}}

\newtheorem{lem}[thm]{Lemma}   %remember to switch all {lem} to {lem}
\newcommand{\bl}{\begin{lem}}
\newcommand{\el}{\end{lem}}

\newtheorem{prop}[thm]{Proposition}
\newcommand{\bp}{\begin{prop}}
\newcommand{\ep}{\end{prop}}

\newtheorem{defn}[thm]{Definition}
\newcommand{\bd}{\begin{defn}}    % This produces an error????
\newcommand{\ed}{\end{defn}}

\newtheorem{rmrk}[thm]{Remark}   %remember to switch all {rmrk} to {rmrk}

\newcommand{\br}{\begin{rmrk}}
\newcommand{\er}{\end{rmrk}}

\newtheorem{example}[thm]{Example}

\newcommand{\GHto}{\stackrel { \textrm{GH}}{\longrightarrow} }

\newcommand{\VFto}{\stackrel {\mathcal{VF}}{\longrightarrow} }

\newcommand{\Scal}{\operatorname{Scalar}}

\newcommand{\be}{\begin{equation}}

 \newcommand{\ee}{\end{equation}}

\newcommand{\diam}{\operatorname{Diam}}

\newcommand{\set}{{\rm{set}}}

%\newcommand{\nmass}[1]{{\mathbf N}(#1}
%\newcommand{\flnorm}[1]{{\mathbf F}(#1}

        %duality with lipschitz functions%
%\newcommnd{\curr}{{\mathbf M}}         %metric current%
      %metric normal current%
    %metric rect. current%
 %metric int. rect. current%
      %metric integral current%

\newcommand{\vol}{\operatorname{Vol}}

\newcommand{\Sph}{{\mathbb S}}         %unit sphere%

\def\({\left(}
\def\){\right)}
\def\d{\delta}

%%%%%%%%%%%%%%%%%%%%%%%%%%%%%%%%%%%%%%

\pgfplotsset{compat=1.18} 

\begin{document}

\title[Geometric Convergence to an Extreme Limit Space]{Geometric Convergence to an Extreme Limit Space with nonnegative scalar curvature}

\author{Christina Sormani}
\thanks{Prof. Sormani is partially supported by NSF DMS \#1006059.}
\address{Christina Sormani, CUNY Graduate Center, NY, NY 10016
and Lehman College, Bronx NY 10468}
\email{sormanic@gmail.com}

\author{Wenchuan Tian}
\address{Wenchuan Tian, Department of Mathematics, University of California, Santa Barbara, 
Santa Barbara, CA 93106-3080}
\email{tian.wenchuan@gmail.com}

\author{Wai-Ho Yeung}
\address{Wai-Ho Yeung, Department of Mathematics, 
University of Vienna, Austria 
}
\email{a12519833@unet.univie.ac.at}
\keywords{Scalar Curvature, Intrinsic Flat Convergence}

\begin{abstract}
In 2014, Gromov conjectured that sequences of manifolds with nonnegative scalar curvature should have subsequences which converge in some geometric sense to limit spaces with some notion of generalized nonnegative scalar curvature.  
In recent joint work with Changliang Wang, the authors found a sequence of warped product Riemannian metrics on $\Sph^2\times \Sph^1$ 
with nonnegative scalar curvature
 whose metric tensors converge in the $W^{1,p}$  sense for $p<2$ to an extreme warped product limit space where the warping function hits infinity at two points.  Here we study this extreme limit space as a metric space and as an integral current space and prove the sequence converges in the volume preserving intrinsic flat and measured Gromov-Hausdorff sense to this space.   This limit space may now be used to test any proposed definitions for generalized nonnegative scalar curvature.   One does not need expertise in Geometric Measure Theory or in Intrinsic Flat Convergence to read this paper.   
\end{abstract}

\maketitle

%\tableofcontents

\section{Introduction}

Gromov introduced the idea of geometric convergence for a sequence of compact Riemannian manifolds, $(M_j,g_j)$, by viewing them as metric spaces, $(M_j, d_j)$, where $d_j$ is the Riemannian distance, and finding limits that are metric spaces, $(M_\infty, d_\infty)$ \cite{Gromov-metric}.   He defined the Gromov-Hausdorff (GH) distance between metric spaces, by taking an infimum over all possible distance preserving maps, $\varphi_j:M_j \to Z$, over all possible compact metric spaces, $Z$, of the Hausdorff distance between the images in $Z$:
\be
d_{GH}((X_1,d_1), (X_2,d_2))=\inf \, d_H^Z(\varphi_1(X_1), \varphi_2(X_2)).
\ee
Note that if one has a sequence of distances, $d_j$, all defined on the same compact
metric space, $X$, and $d_j$ converge uniformly to $d_\infty$ as functions, then
$(X, d_j) \GHto (X,d_\infty)$.   Gromov-Hausdorff convergence has the advantage of
not requiring all the spaces to be the same.
Gromov proved a beautiful compactness theorem for GH convergence as well \cite{Gromov-metric}.

Gromov-Hausdorff convergence has been applied with great success to study sequences of Riemannian manifolds with nonnegative sectional curvature. The GH-limits of such sequences are Alexandrov spaces which are metric spaces satisfying a generalized notion of nonnegative sectional curvature.   See the work of Burago-Gromov-Perelman \cite{BGP} and others.
Gromov-Hausdorff convergence has also been applied combined with measure convergence
to study sequences of Riemannian manifolds with nonnegative Ricci curvature.   The measured GH-limits of such sequences are $CD$ and $RCD$ spaces which are metric spaces with Borel measures satisfying generalized notion of nonnegative Ricci curvature.  See the work of Cheeger-Colding, Lott-Villani, Sturm, and
Ambrosio-Gigli-Savare in
\cite{AGS} \cite{ChCo-PartI}  \cite{Lott-Villani}
\cite{Sturm}.

In \cite{Gromov-Plateau} and \cite{Gromov-Dirac}, Gromov vaguely conjectured that a sequence of Riemannian manifolds with nonnegative scalar curvature, $\Scal \ge 0$, should have a subsequence which converges in some geometric sense to a limit space with some generalized notion of ``nonnegative scalar curvature".  He suggested that the intrinsic flat convergence defined by Sormani and Wenger in \cite{SW-JDG} building upon work of Ambrosio-Kirchheim in \cite{AK} might be the appropriate notion.   Various conjectures have been formulated in this direction and related examples have been constructed.
See the survey by Sormani \cite{Sormani-conjectures} and papers that cite this survey.   

It is not yet known what notion of generalized nonnegative scalar curvature works best for the geometric limits of Riemannian manifolds with nonnegative scalar curvature.   In \cite{STW-Extreme}, Sormani-Tian-Wang constructed a sequence of warped product manifolds with nonnegative scalar curvature with fibres that stretch to infinite length (see Example~\ref{ex:sequence} below).
They proved this sequence converges smoothly away from the
singular fibres to an extreme warped product space
(see Example~\ref{ex:limit} below).  In this paper we
prove that this sequence converges in a geometric way and
describe the limit metric space (see Theorem~\ref{Thm:Main} and Theorem~\ref{thm:VF} within).
This specific limit space can then be used to test various notions of nonnegative scalar curvature.

The sequence of Riemannian warped product manifolds that we explore in this paper was first introduced Sormani-Tian-Wang in \cite{STW-Extreme}:

\begin{example}\label{ex:sequence}
Consider the sequence of warped product Riemannian manifolds, $\Sph^2\times_{g_j} \Sph^1$, which are diffeomorphic to $\Sph^2\times \Sph^1$, with Riemannian metrics 
\be
g_j=g_{\Sph^2}+f_j^2(r) g_{\Sph^1}=dr^2+ \sin^2(r) d\theta^2+f_j^2(r) d\varphi^2.
\ee
written using fixed $(r,\theta)$ coordinates on $\Sph^2$ and fixed $\varphi$ on the $\Sph^1$-fibres
where 
\be
f_j(r)=\ln\left(\frac{1+a_j}{\sin^2 r+a_j}\right)+\beta,
\ee
taking fixed $\beta\geq 2$, positive $a_j$ decreasing to $0$.   
 \end{example}

\begin{figure}[h] %  figure placement: here, top, bottom, or page
   \centering
   \includegraphics[width=3in]{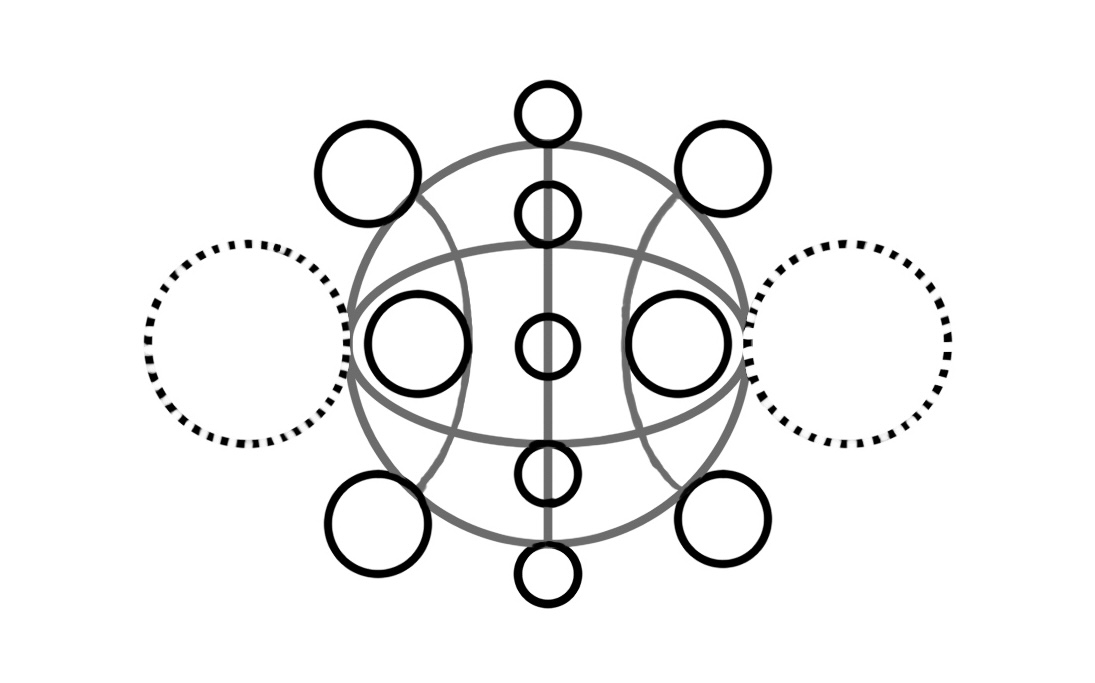} 
   \caption{ Example~\ref{ex:limit} is an extreme limit warped product space, $(\Sph^2\times \Sph^1,g_\infty)$, 
   in which the circular fibers, $r^{-1}(0)$ and $r^{-1}(\pi)$, have been stretched infinitely.  Since the lengths of any curves in these fibers are infinite, we depict these fibers as circles of discrete points.   Away from these
   two singular fibers, $(\Sph^2\times \Sph^1\setminus S,g_\infty)$ is a smooth Riemannian manifold.
   }
   \label{fig:STW-intro}
\end{figure}
 
 In \cite{STW-Extreme}, it was proven that this sequence has $\Scal_j \ge 0$ yet as $j\to \infty$ with diameter and volume bounded above. In addition, closed minimal surfaces in these manifolds cannot have area decreasing to $0$ as $j\to \infty$.  
   
In addition Sormani-Tian-Wang proved smooth convergence of the $f_j \to f_\infty$ on compact sets away from the
set
\be
S=r^{-1}(0) \cup r^{-1}(\pi)
\ee
and defined the following extreme limit space
using this limit function.

\begin{example}\label{ex:limit}
Consider the extreme warped product space, $\Sph^2\times_{f_\infty}\Sph^1$, 
with metric tensor
\be
g_\infty=g_{\Sph^2}+f_\infty^2(r) g_{\Sph^1}=dr^2+ \sin^2(r) d\theta^2+f_\infty^2(r) d\varphi^2.
\ee
written using $(r,\theta)$ coordinates on $\Sph^2$ and $\varphi$ on the $\Sph^1$-fibres with the extreme warping function defined by
\be \label{f-infty}
f_\infty(r, \theta)=\ln \left(\frac{1}{\sin^2 r}\right)+\beta=-2\ln \sin r+\beta,
\ee
where $\beta\geq 2$ as in Example \ref{ex:sequence}.   Note that $g_\infty$ is smooth away from the singular set
\be
\label{DefinitionSingularSet}
S=\{(r,\theta,\varphi)\in\Sph^2\times \Sph^1,\text{ such that } r=0\text{ or }r=\pi\}
\ee
so $(\Sph^2\times \Sph^1\backslash S, g_\infty)$ is a smooth open Riemannian manifold.
See Figure~\ref{fig:STW-intro}.
 \end{example}

In addition to proving $g_j$ converge to $g_\infty$
smoothly away from the singular set, $S$,
Sormani-Tian-Wang proved 
that the metric tensors, $g_j$, converge to $g_\infty$  in $W^{1,p}(\Sph^2\times \Sph^1)$ as $j\to \infty$ for  $p\in [1,2)$. They also proved 
the limit space has nonnegative distributional scalar curvature in the sense defined by D.Lee-LeFloch \cite{Lee-LeFloch} building upon the work of LeFloch-Mardare \cite{LM07}.  

In this paper, we study the geometric notions of convergence of this sequence and the geometric properties of the limit space.  

\begin{thm}\label{Thm:Main}
The sequence of warped product Riemannian manifolds in Example~\ref{ex:sequence} 
viewed as metric spaces,
$(\Sph^2\times \Sph^1,d_j)$, converges in the uniform and Gromov-Hausdorff sense to the metric space
$(\Sph^2\times \Sph^1,d_\infty)$ 
which 
is the metric completion of
the extreme limit space,
$(\Sph^2\times \Sph^1\setminus S,d_{g_\infty})$, of Example~\ref{ex:limit}.  This limit space is
a compact
metric space homeomorphic to the
isometric product space,
$(\Sph^2\times \Sph^1,d_0)$.
In addition the singular set $S$
inside this limit space has Hausdorff dimension $1$ but the
singular fibres are unrectifiable,
$\mathcal{H}^1_{d_\infty}(S)=\infty$.
Finally $
{\mathcal{H}}^3_{d_\infty}(\Sph^2\times \Sph^1, d_\infty)=(2\pi)^2(2\beta+4-2\ln 4).
$
\end{thm}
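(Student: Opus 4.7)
The plan is to prove the theorem in three pieces: the uniform and Gromov--Hausdorff convergence $d_j\to d_\infty$, the homeomorphism and completion claim, and the Hausdorff dimension and volume computations. The key structural observation is monotonicity of the warping functions. Writing
\[
f_j(r) - f_\infty(r) = \ln\!\left(\frac{(1+a_j)\sin^2 r}{\sin^2 r + a_j}\right) \le 0,
\]
with equality only at $r = \pi/2$, one checks that $f_j \uparrow f_\infty$ pointwise as $a_j \downarrow 0$. Hence $g_j \le g_{j+1} \le g_\infty$ as bilinear forms, every absolutely continuous curve $\gamma$ satisfies $L_{g_j}(\gamma) \uparrow L_{g_\infty}(\gamma)$ by monotone convergence, and taking infima over curves gives $d_j \le d_{j+1} \le d_\infty$ everywhere.

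\textbf{Uniform convergence and GH limit.} For the matching lower bound on $d_j$, the smooth convergence $g_j \to g_\infty$ on compact subsets of $\Sph^2\times\Sph^1\setminus S$ from \cite{STW-Extreme} together with the observation $f_j(0) = \ln(1/a_j) + \beta + o(1)\to\infty$ rules out nontrivial $d_j$-shortcuts through the singular fibres: any path concentrating near $r=0$ incurs $g_j$-length at least of order $f_j(0)|\Delta\varphi|$, which diverges. This yields pointwise convergence $d_j\to d_\infty$. Since the $d_j$ are continuous on the compact set $(\Sph^2\times\Sph^1)^2$ and increase monotonically to the continuous limit $d_\infty$, Dini's theorem upgrades pointwise to uniform convergence, whence Gromov--Hausdorff convergence follows immediately since all spaces share the same underlying set.

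\textbf{Completion, topology, and dimension of $S$.} The smooth part is $d_\infty$-dense since the radial curve from $(r,\theta,\varphi)$ to $(0,\theta,\varphi)$ has $g_\infty$-length exactly $r$. The identity map $(\Sph^2\times \Sph^1,d_0)\to(\Sph^2\times\Sph^1,d_\infty)$ is a continuous bijection between compact Hausdorff spaces, hence a homeomorphism. To analyze $S$, the through-the-bulk path passing to $r=\epsilon$, moving by $\Delta\varphi$, and returning yields
\[
d_\infty\bigl((0,\theta,\varphi_1),(0,\theta,\varphi_2)\bigr) \le D(|\Delta\varphi|) := \inf_{\epsilon>0}\bigl(2\epsilon + f_\infty(\epsilon)|\Delta\varphi|\bigr) \sim -2|\Delta\varphi|\ln|\Delta\varphi|,
\]
with a matching lower bound obtained by projecting absolutely continuous paths onto the $(r,\varphi)$-plane and applying the warped-product length formula. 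Covering $S$ by $n$ equal $\varphi$-arcs gives $\sum \mathrm{diam}^s \sim n\,D(2\pi/n)^s \to 0$ for any $s>1$, so $\dim_H(S) \le 1$; conversely, any cover of $d_\infty$-diameter at most $\delta$ forces each $\varphi$-width $x_i \le D^{-1}(\delta)\to 0$, yielding $\sum \mathrm{diam}(U_i) \ge c\sum x_i|\ln x_i| \ge 2\pi c|\ln D^{-1}(\delta)|\to\infty$, so $\mathcal{H}^1(S)=\infty$.

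\textbf{Volume and main obstacle.} Off $S$, $d_\infty$ agrees with the Riemannian distance, so $\mathcal{H}^3_{d_\infty}$ coincides with $\vol_{g_\infty}$ there, and the singular set has $\mathcal{H}^3$-measure zero by dimension. Hence $\mathcal{H}^3_{d_\infty}(\Sph^2\times\Sph^1) = (2\pi)^2\int_0^\pi \sin r(-2\ln\sin r + \beta)\,dr$, which via $\int_0^\pi \sin r\,dr = 2$ and the integration by parts $\int_0^\pi \sin r\ln \sin r\,dr = 2\ln 2 - 2$ (taking $u=\ln\sin r$, $dv=\sin r\,dr$) evaluates to $(2\pi)^2(2\beta + 4 - 2\ln 4)$. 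The main technical obstacle is the matching lower bound both for $d_j\to d_\infty$ and for the estimate along $S$: one must rule out that short paths exploit the large-but-finite warping near $r=0,\pi$ to beat $d_\infty$ by a definite amount. Once these controls are established, monotonicity, Dini's theorem, and direct integration produce the remaining statements.
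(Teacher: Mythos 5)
Your architecture matches the paper's (monotonicity $\Rightarrow$ pointwise limit, explicit comparison curves for upper bounds, homeomorphism $\Rightarrow$ compactness $\Rightarrow$ uniform/GH convergence, then completion, dimension, and volume), and several of your sub-arguments are legitimate variants: Dini's theorem in place of the Perales--Sormani monotone-to-GH theorem, and ``continuous bijection from a compact space to a Hausdorff space'' in place of the paper's observation that $d_0\le d_\infty$ makes the inverse $1$-Lipschitz. However, there is one genuine gap, and it sits exactly at the point you yourself flag as ``the main technical obstacle.'' To show that $(\Sph^2\times\Sph^1,d_\infty)$ is the metric completion of $(\Sph^2\times\Sph^1\setminus S,d_{g_\infty})$ you must prove $d_\infty=d_{g_\infty}$ on the complement of $S$ (density alone is not enough), and your justification --- ``any path concentrating near $r=0$ incurs $g_j$-length at least of order $f_j(0)|\Delta\varphi|$'' --- does not do this and is not even correct as stated. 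The dangerous competitors are not paths that change $\varphi$ while near $S$ (those are obviously long); they are near-minimizing $g_j$-curves that dip into a neighborhood of $S$ while changing $r$ and $\theta$, or that pass through $S$. Since $d_\infty\le d_{g_\infty}$ is automatic from monotone convergence of lengths, the substantive direction is an \emph{upper} bound on $d_{g_\infty}-d_\infty$. The paper's mechanism (Proposition~\ref{prop:limsup-lambda}) is to take a near-minimizing $g_j$-curve, truncate its $r$-coordinate at $R$ and $\pi-R$, and observe that this can only increase the $\sin^2(r)\,d\theta^2$ term (the $dr^2$ term drops and the $f_j^2(r)\,d\varphi^2$ term decreases because $f_j$ decreases away from the poles), so the length grows by at most $\sin(R)\int|\theta'|\le 3\pi\sin(R)$, after first arguing that a minimizing geodesic meets $S$ in at most one point and that the total $\theta$-variation is controlled. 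Without some quantitative statement of this kind your proof of the completion claim, and hence also of $\mathcal{H}^3_{d_\infty}=\vol_{g_\infty}$ off $S$, is incomplete.

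Two smaller points. First, Dini's theorem and the compact-to-Hausdorff argument both require continuity of the identity map $(\Sph^2\times\Sph^1,d_0)\to(\Sph^2\times\Sph^1,d_\infty)$ \emph{at points of $S$}, where $f_\infty$ blows up; your through-the-bulk estimate $d_\infty\le\inf_\epsilon(2\epsilon+f_\infty(\epsilon)|\Delta\varphi|)$ is the right tool (it is the paper's Lemma~\ref{LemmaDinfty-cLE} combined with Lemma~\ref{lem:lnsinx}), but you should actually carry out that estimate for a general nearby point, not only for points on the same fibre. Second, your route to $\mathcal{H}^1_{d_\infty}(S)=\infty$ via a matching \emph{lower} bound $d_\infty\gtrsim|\Delta\varphi|\,|\ln|\Delta\varphi||$ along the fibre is genuinely different from the paper's and would need the projection argument written out (project to the $(r,\varphi)$ half-plane, set $R=\max r$, and use $L\ge\max\{2R,\,f_j(R)|\Delta\varphi|\}$). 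The paper avoids this entirely: since $d_\infty\ge d_j$, partition sums for the fibre curve $c_r$ are bounded below by $\pi f_j(0)\to\infty$, so $c_0,c_\pi$ are unrectifiable injective curves and their images automatically have infinite $\mathcal{H}^1$. Your version buys a sharper modulus of continuity along $S$; the paper's buys a shorter proof.
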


We also prove volume preserving intrinsic flat convergence of our sequence to a subset of the Gromov-Hausdorff limit whose metric completion is the Gromov-Hausdorff limit.  See Theorem~\ref{thm:VF} stated within.
These theorems allow others to then test various geometric definitions of generalized nonnegative scalar curvature on this limit space.   
     
Note that this extreme limit space is not an Alexandrov Space with curvature bounded from below, nor is it $CD(3,K)$-space, nor an RCD space since the
Ricci curvature of $g_\infty$ is not bounded from below on $\Sph^2\times \Sph^1\backslash S$ as shown in \cite{STW-Extreme}.  It is outside the class of $C^0$
Riemannian manifolds studied by Bamler and Burkhardt-Guim \cite{Bamler-Gromov} \cite{Burkhardt_Guim_2019} and by Gromov and Chao Li in \cite{Gromov-Dirac}\cite{li2019positive} but
still might be studied using their notions of generalized scalar curvature.  It has lower regularity than the spaces studied by D.Lee-LeFloch \cite{Lee-LeFloch} and by M.C.Lee-Topping
\cite{lee2022metric}.   
It is also outside of the class
of Riemannian manifolds with isolated singularities studied by Li-Mantoulidis \cite{li2019positive}, because the singular set $S$ does not consist of isolated points.  We hope it will
provide new insight into the full class of
possible limit spaces.

This paper is written to be easily read by those who have not studied geometric notions of convergence before. We begin with a background section reviewing the properties of Example~\ref{ex:sequence}.  In Section 3, we
prove 
pointwise convergence of the distance functions, $d_j$ to $d_\infty$  and prove the claimed homeomorphism.   In Section 4 we review GH convergence and prove uniform and GH convergence of
$(\Sph^2\times\Sph^1,d_j)$
to $(\Sph^2\times\Sph^1,d_\infty)$ applying a
theorem of Perales-Sormani from \cite{PS-Monotone}.
In Section 5 we prove the
metric completion of the smooth limit away from the singular set is $(\Sph^2\times\Sph^1,d_\infty)$ and control the Hausdorff measures.   At the end of this section we complete the proof of Theorem~\ref{Thm:Main}.  In Section 6, we state and prove intrinsic flat convergence relying on readers to consult the work of Perales-Sormani for background \cite{PS-Monotone}.   In Section 7 we discuss open problems.

\tableofcontents

\section{Background}
\label{Sect-Seq}

In this section we review the properties of
the sequence of Riemannian warped product manifolds of Example~\ref{ex:sequence}.

\subsection{Warping functions
of Example~\ref{ex:sequence}}

The following was proven by
Sormani-Tian-Wang in \cite{STW-Extreme}:

\begin{lem} \label{lem:warp-inc}
For $f_j(r)$ and $g_j$ as defined in Example \ref{ex:sequence} we have
\be
f_j(r) \le f_{j+1}(r)
\ee
and, for all tangent vectors, $V$,
we have
\be 
g_j(V,V) \le  g_{j+1}(V,V).
\ee
\end{lem}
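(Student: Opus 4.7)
The plan is to reduce both inequalities to a single monotonicity statement about the scalar function $f_j(r)$ in the parameter $a_j$, and then extract the metric comparison coordinatewise.

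First I would look at the function
\[
h(a,r) = \frac{1+a}{\sin^2 r + a},
\]
so that $f_j(r) = \ln h(a_j,r) + \beta$. A direct computation gives
\[
\frac{\partial h}{\partial a}(a,r) = \frac{(\sin^2 r + a) - (1+a)}{(\sin^2 r + a)^2} = \frac{\sin^2 r - 1}{(\sin^2 r + a)^2} \le 0,
\]
with equality only where $\sin^2 r = 1$. Hence $a \mapsto h(a,r)$ is nonincreasing for each fixed $r$, and composing with the (increasing) logarithm, $a \mapsto \ln h(a,r) + \beta$ is nonincreasing as well. Since $a_{j+1} \le a_j$ by hypothesis, this yields the pointwise bound $f_j(r) \le f_{j+1}(r)$, which is the first assertion.

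Next I would lift this to the metric tensors. Since $h(a,r) \ge 1$ (the numerator dominates the denominator as $\sin^2 r \le 1$), we have $\ln h(a_j, r) \ge 0$, and together with the standing assumption $\beta \ge 2$ this gives $f_j(r) \ge 2 > 0$. Therefore squaring preserves the inequality: $f_j^2(r) \le f_{j+1}^2(r)$. Writing a tangent vector in the coordinate frame as $V = V^r \partial_r + V^\theta \partial_\theta + V^\varphi \partial_\varphi$, the warped product structure gives
\[
g_j(V,V) = (V^r)^2 + \sin^2(r)\,(V^\theta)^2 + f_j^2(r)\,(V^\varphi)^2.
\]
The first two terms are independent of $j$, and the third is nondecreasing in $j$ by what was just shown, so $g_j(V,V) \le g_{j+1}(V,V)$.

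There is no real obstacle here; the only point to be careful about is the sign of $f_j$, so that monotonicity of $f_j$ implies monotonicity of $f_j^2$. The inequality $h(a,r) \ge 1$ plus $\beta \ge 2$ takes care of this. Everything else is a single-variable calculus check and a coordinate expansion of the warped product metric.
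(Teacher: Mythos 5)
Your proof is correct: the sign computation $\partial_a h = (\sin^2 r - 1)/(\sin^2 r + a)^2 \le 0$ combined with $a_{j+1}\le a_j$ gives $f_j\le f_{j+1}$, and the positivity $f_j\ge \beta\ge 2$ legitimately upgrades this to $f_j^2\le f_{j+1}^2$ and hence to the coordinatewise comparison of the warped product metrics. The paper itself does not reprove this lemma (it cites Sormani--Tian--Wang \cite{STW-Extreme}), and your argument is the standard one that proof would use, so there is nothing to add.
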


\begin{lem} \label{lem:warp-unbounded}
The sequence $f_j$ as defined in Example \ref{ex:sequence} is unbounded in the sense that
\be
\lim_{j\to \infty} f_j(0) \to \infty \textrm{ and } \lim_{j\to \infty} f_j(\pi) \to \infty.
\ee
\end{lem}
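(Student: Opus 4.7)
The plan is to simply evaluate the formula for $f_j$ directly at the endpoints $r=0$ and $r=\pi$ and exploit that $a_j \downarrow 0$. At both of these values one has $\sin^2 r = 0$, so the defining formula
\[
f_j(r)=\ln\!\left(\frac{1+a_j}{\sin^2 r+a_j}\right)+\beta
\]
collapses to
\[
f_j(0)=f_j(\pi)=\ln\!\left(\frac{1+a_j}{a_j}\right)+\beta.
\]

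Next I would rewrite the argument of the logarithm as $1/a_j + 1$. Since by hypothesis the sequence of positive numbers $a_j$ decreases to $0$, the quantity $1/a_j$ diverges to $+\infty$, and so does $\ln(1/a_j+1)$. Adding the fixed constant $\beta\geq 2$ does not change the limiting behavior, so both $f_j(0)$ and $f_j(\pi)$ tend to $+\infty$ as $j\to\infty$, which is exactly the claim.

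There is essentially no obstacle here, since the argument is a direct substitution plus monotonicity of the logarithm. The only thing worth noting for the reader's convenience is that the same formula governs both endpoints (this is why a single computation suffices), and that this is consistent with \lemref{lem:warp-inc}, which already ensures $f_j(0)$ and $f_j(\pi)$ are monotone nondecreasing, so the limits exist in $(0,\infty]$ and the computation above shows they equal $+\infty$.
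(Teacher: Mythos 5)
Your computation is correct: substituting $r=0$ or $r=\pi$ gives $f_j(0)=f_j(\pi)=\ln\bigl(1/a_j+1\bigr)+\beta\to\infty$ since $a_j\downarrow 0$, which is exactly the intended (and essentially only) argument; the paper itself defers the proof to \cite{STW-Extreme}, where the same direct evaluation is used.
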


\begin{prop} \label{prop:warp-conv}
The sequence $g_j$ as defined in Example \ref{ex:sequence} 
converges smoothly away from $S$ to $g_\infty$ of Example~\ref{ex:limit}.   That is, on compact
subsets $K \subset (\Sph^2\times\Sph^1)\setminus S$ we have
\be
|f_j-f_\infty|_{C^\infty(K)}\to 0 \textrm{ and } |g_j-g_\infty|_{g_\infty,C^\infty(K)}\to 0
\ee
\end{prop}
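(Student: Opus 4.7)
The plan is to reduce everything to an elementary uniform estimate on the logarithm of the explicit warping functions and then bootstrap to derivatives. Write
\be
f_j(r) - f_\infty(r) = \ln(1+a_j) + \ln\!\left(\frac{\sin^2 r}{\sin^2 r + a_j}\right) = \ln(1+a_j) - \ln\!\left(1 + \frac{a_j}{\sin^2 r}\right).
\ee
Fix a compact set $K\subset (\Sph^2\times \Sph^1)\setminus S$. Since $K$ avoids the singular fibres $r=0,\pi$, there is $c>0$ with $\sin r\geq c$ on $K$. Therefore the function $h_j(r):=a_j/\sin^2 r$ satisfies $\|h_j\|_{L^\infty(K)}\leq a_j/c^2 \to 0$, giving pointwise-then-uniform convergence of $f_j$ to $f_\infty$ on $K$.

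Next I would promote this to $C^\infty(K)$ convergence. Since $1/\sin^2 r$ and all of its derivatives are bounded on $K$, multiplying by $a_j\to 0$ shows $h_j\to 0$ in $C^k(K)$ for every $k\in\mathbb{N}$. The function $u\mapsto \ln(1+u)$ is smooth on a neighbourhood of $0$, so Faà di Bruno (or an inductive product-rule computation) gives $\ln(1+h_j)\to 0$ in $C^k(K)$ as well. Combined with $\ln(1+a_j)\to 0$, this yields $|f_j-f_\infty|_{C^\infty(K)}\to 0$.

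Finally, I would pass from functions to metric tensors. Only the $d\varphi\otimes d\varphi$ component of the warped product metric depends on $j$, so
\be
g_j-g_\infty = \bigl(f_j^2 - f_\infty^2\bigr)\,d\varphi^2 = (f_j+f_\infty)(f_j-f_\infty)\,d\varphi^2.
\ee
The factor $f_\infty$ is smooth on $K$ because $\sin r\geq c$, and the $C^\infty$ convergence of $f_j\to f_\infty$ gives uniform $C^k$ bounds on $f_j$ on $K$. Estimating the $g_\infty$-pointwise norms of covariant derivatives of the right-hand side in terms of the $C^k(K)$ norms of $f_j\pm f_\infty$ (which is a routine computation since $g_\infty$ is a fixed smooth background metric on a neighbourhood of $K$) produces $|g_j-g_\infty|_{g_\infty,C^\infty(K)}\to 0$.

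The only obstacle is the bookkeeping for the composition $\ln(1+h_j)$ with its derivatives; apart from this standard chain-rule computation, the statement is essentially immediate once one observes that $a_j/\sin^2 r$ is uniformly small on any set avoiding the singular fibres.
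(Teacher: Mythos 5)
Your proof is correct. Note that the paper does not actually prove this proposition — it is quoted as background from Sormani--Tian--Wang \cite{STW-Extreme} — and your argument (the decomposition $f_j-f_\infty=\ln(1+a_j)-\ln\bigl(1+a_j/\sin^2 r\bigr)$, the uniform lower bound $\sin r\ge c$ on $K$, the chain-rule bootstrap to all derivatives, and the factorization $g_j-g_\infty=(f_j+f_\infty)(f_j-f_\infty)\,d\varphi^2$ against the fixed background metric) is exactly the standard direct verification one would expect that reference to contain.
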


\subsection{Volumes of Example~\ref{ex:sequence}}

The following proposition was proven by Sormani-Tian-Wang in \cite{STW-Extreme}:

\begin{prop}[Volumes]\label{prop-vol-sequence}
For $\Sph^2\times_{f_j}\Sph^1$ as defined in Example \ref{ex:sequence}, 
the volumes are uniformly bounded from above:
\be
\vol_j(\Sph^2\times_{g_j} \Sph^1) \le 4\pi^3\beta.
\ee
\end{prop}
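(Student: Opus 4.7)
The plan is to compute the volume as an explicit integral over $r \in [0,\pi]$ and then bound it using the monotonicity lemma (\lemref{lem:warp-inc}) together with the pointwise convergence in \propref{prop:warp-conv}, which reduces the task to evaluating the limit integrand $f_\infty$, where the integral is elementary.

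First I would write down the volume form. Since $g_j = dr^2 + \sin^2 r\, d\theta^2 + f_j^2(r)\, d\varphi^2$ is a warped-product metric, $\sqrt{\det g_j} = \sin(r)\, f_j(r)$, and integrating over $\theta,\varphi \in [0,2\pi]$ yields
\[
\vol_j(\Sph^2 \times_{g_j} \Sph^1) \;=\; 4\pi^2 \int_0^\pi \sin(r)\, f_j(r)\, dr.
\]
Next, by \lemref{lem:warp-inc} the $f_j$ are monotone increasing in $j$, and by \propref{prop:warp-conv} they converge to $f_\infty(r) = -2\ln\sin r + \beta$ pointwise on $(0,\pi)$; hence $f_j(r) \le f_\infty(r)$ for every $j$ and every $r \in (0,\pi)$. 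Substituting this bound and separating the two terms of $f_\infty$,
\[
\vol_j \;\le\; 4\pi^2 \int_0^\pi \sin(r)\, f_\infty(r)\, dr \;=\; 4\pi^2\!\left(2\beta - 2\int_0^\pi \sin r \ln \sin r\, dr\right).
\]
The remaining integral is classical: the substitution $u = \cos r$ turns it into $\tfrac{1}{2}\int_{-1}^1 \ln(1-u^2)\, du = 2\ln 2 - 2$, giving
\[
\vol_j \;\le\; 4\pi^2(2\beta + 4 - 4\ln 2).
\]

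The last step is the numerical check $4\pi^2(2\beta + 4 - 4\ln 2) \le 4\pi^3 \beta$, equivalently $(\pi-2)\beta \ge 4 - 4\ln 2$. Since $\ln 2 > 1/2$ one has $4 - 4\ln 2 < 2$, while $\beta \ge 2$ and $\pi > 3$ give $(\pi-2)\beta \ge 2(\pi-2) > 2$, so the inequality holds. There is no real obstacle here; the computation is elementary once monotonicity is used to replace $f_j$ by $f_\infty$. I note that the refined bound $4\pi^2(2\beta + 4 - 4\ln 2)$ is in fact sharp and matches the $\mathcal{H}^3_{d_\infty}$-measure asserted in \thmref{Thm:Main}, so the cleaner but weaker form $4\pi^3\beta$ in the proposition follows immediately.
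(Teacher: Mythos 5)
Your proof is correct. Note that the paper does not actually prove Proposition~\ref{prop-vol-sequence}; it simply cites \cite{STW-Extreme}, so there is no in-paper argument to compare against, but your direct integration of the warped-product volume form is the natural route and every step checks out: $\sqrt{\det g_j}=\sin(r)f_j(r)$, the monotone bound $f_j\le f_\infty$ (which also follows from the elementary inequality $\tfrac{1+a_j}{\sin^2 r+a_j}\le \tfrac{1}{\sin^2 r}$ without invoking convergence), the classical value $\int_0^\pi \sin r\,\ln\sin r\,dr=2\ln 2-2$, and the final numerical comparison $(\pi-2)\beta\ge 2(\pi-2)>2>4-4\ln 2$. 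Your sharp intermediate bound $4\pi^2(2\beta+4-4\ln 2)$ agrees with the limit volume $(2\pi)^2(2\beta+4-2\ln 4)$ asserted in Theorem~\ref{Thm:Main} and Proposition~\ref{prop:Hausdorff-vol-limit}, which is a good consistency check on the computation.
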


\subsection{Lengths on Example~\ref{ex:sequence}}

As for any Riemannian manifold, the length of a curve is defined by:

\begin{defn}[Length of a Curve]\label{DefinitionLengthCurve}
Given a continuously differentiable curve $c: [a,b]\to \Sph^2\times \Sph^1$, we define the length of $c$ as measured by the metric $g$ as
\be
L_g(c)=\int_{a}^b\sqrt{g(\dot{c},\dot{c})}\,dt,
\ee
where $\dot{c}$ denotes the derivative with respect to $t$.  
For the sequence, $g_j$, of Example~\ref{ex:sequence}, we will write, $L_j=L_{g_j}$. 
\end{defn}

Sormani-Tian-Wang proved the following lemma in \cite{STW-Extreme}:

\begin{lem}\label{lem-fibres}
The $\Sph^1$ fibres above the poles in Example~\ref{ex:sequence} parametrized by
the curves,
\be
c_0(t)=(0, 0, t) \textrm{ and }
c_\pi(t)=(\pi, 0, t),
\ee
where $t\in [0, 2\pi]$ 
have
lengths, $L_j(c_0)=2\pi f_j(0)$ and $L_j(c_\pi)=2\pi f_j(\pi)$,
which diverge to infinity as $j \to \infty$.
\end{lem}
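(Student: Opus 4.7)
The plan is to treat this as a direct computation from the Definition~\ref{DefinitionLengthCurve} of length, followed by an appeal to the already-stated Lemma~\ref{lem:warp-unbounded} for the divergence. There is no real geometric obstacle here; the point of the lemma is simply to record, for later use in the paper, that these two distinguished fibres have unbounded length along the sequence, which is what makes the limit space extreme.

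First I would unpack the tangent vectors. For $c_0(t)=(0,0,t)$ and $c_\pi(t)=(\pi,0,t)$ with $t\in[0,2\pi]$, the derivatives with respect to $t$ are $\partial_\varphi$ in both cases, with $\partial_r$- and $\partial_\theta$-components equal to zero. Plugging into
\[
g_j=dr^2+\sin^2(r)\,d\theta^2+f_j^2(r)\,d\varphi^2
\]
at the fixed values $r=0$ and $r=\pi$, the only surviving contribution is $f_j^2(r)$, so $g_j(\dot{c}_0,\dot{c}_0)=f_j^2(0)$ and $g_j(\dot{c}_\pi,\dot{c}_\pi)=f_j^2(\pi)$. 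Taking square roots (both $f_j$ values are positive since $\beta\ge 2$) and integrating over $[0,2\pi]$ in Definition~\ref{DefinitionLengthCurve} immediately gives $L_j(c_0)=2\pi f_j(0)$ and $L_j(c_\pi)=2\pi f_j(\pi)$.

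For divergence as $j\to\infty$, Lemma~\ref{lem:warp-unbounded} already supplies $f_j(0)\to\infty$ and $f_j(\pi)\to\infty$, so multiplying by the constant $2\pi$ preserves the divergence and concludes the proof. If one wishes to see the divergence explicitly rather than by reference, one could just substitute $r=0$ (or $r=\pi$) into the formula for $f_j$ in Example~\ref{ex:sequence}, giving $f_j(0)=f_j(\pi)=\ln\bigl((1+a_j)/a_j\bigr)+\beta$, which tends to $+\infty$ as $a_j\downarrow 0$. The hardest step is really just remembering which coordinate is being varied on each fibre; there is no estimate to control.
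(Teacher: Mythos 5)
Your computation is correct and is the standard direct argument: the velocity of each fibre curve is $\partial_\varphi$, so the integrand in Definition~\ref{DefinitionLengthCurve} is the constant $f_j(0)$ (resp.\ $f_j(\pi)$), giving length $2\pi f_j(0)$ (resp.\ $2\pi f_j(\pi)$), and divergence follows from Lemma~\ref{lem:warp-unbounded} or from the explicit value $f_j(0)=f_j(\pi)=\ln\bigl((1+a_j)/a_j\bigr)+\beta\to\infty$. The paper itself does not reprove this lemma but cites it from Sormani--Tian--Wang, and your argument is exactly the computation that reference relies on.
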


\subsection{Distances on Example~\ref{ex:sequence}}

On any Riemannian manifold the distance function is
defined as follows:

\begin{defn}[Distance Function]\label{defn:dj}
Given any Riemannian manifold, $(M,g)$,
	For any $p,\ q\in M$, define
	\be\label{eq:d_g}
	d_{g}(p,q)=\inf\{L_{g}(c)\},
	\ee
	where the infimum is taken over continuously differentiable curves, $c:[0,1]\to M$
    with $c(0)=p$ and $c(1)=q$ and 
    and where $L_g$ is defined as in Definition~\ref{DefinitionLengthCurve}.

    For Example~\ref{ex:sequence},
    $(\Sph^2\times\Sph^1,g_j)$
    is a smooth Riemannian manifold and
    we define $d_j=d_{g_j}$ as in (\ref{eq:d_g})
    using curves in $\Sph^2\times\Sph^1$.
    For Example~\ref{ex:limit},
    $(\Sph^2\times\Sph^1\setminus S,g_\infty)$
    is a smooth Riemannian manifold
    and we define $d_{g_\infty}$ as in (\ref{eq:d_g})
    using curves that stay
    within $\Sph^2\times\Sph^1\setminus S$
    avoiding the singular set, $S$.
    \footnote{We will define $d_\infty$ later 
    in Theorem~\ref{thm:ptwise-homeo} as the
    pointwise limit of the $d_j$.}
\end{defn}

Sormani-Tian-Wang proved the following three results in \cite{STW-Extreme}:

\begin{lem}[Distance Functions are Increasing]\label{LemDistanceFunctionsInc}
 The sequence of distance functions, $d_j$, as defined in Definition \ref{defn:dj} 
 satisfies the inequality:
 	\be
 	d_j(p,q)\leq d_{j+1}(p,q) \qquad \forall j \in {\mathbb N}.
 	\ee
\end{lem}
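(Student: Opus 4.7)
The plan is to deduce the monotonicity of the distance functions directly from the monotonicity of the metric tensors established in Lemma~\ref{lem:warp-inc}, by passing through the length functional.

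First I would fix $p,q \in \Sph^2\times\Sph^1$ and let $c:[0,1]\to \Sph^2\times\Sph^1$ be any continuously differentiable curve joining $p$ to $q$. By Lemma~\ref{lem:warp-inc}, for every $t\in[0,1]$ the tangent vector $\dot c(t)$ satisfies
\be
g_j(\dot c(t), \dot c(t)) \le g_{j+1}(\dot c(t), \dot c(t)),
\ee
so taking square roots and integrating gives
\be
L_j(c) = \int_0^1 \sqrt{g_j(\dot c, \dot c)}\, dt \le \int_0^1 \sqrt{g_{j+1}(\dot c, \dot c)}\, dt = L_{j+1}(c).
\ee

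Next I would take the infimum of both sides over all continuously differentiable curves $c$ from $p$ to $q$. Since $L_j(c) \le L_{j+1}(c)$ holds for every admissible $c$, the infimum of the left-hand side is bounded above by the infimum of the right-hand side, which by Definition~\ref{defn:dj} yields $d_j(p,q) \le d_{j+1}(p,q)$. As $p,q$ and $j$ were arbitrary, the claim follows.

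There is no real obstacle here; the proof is essentially a one-line consequence of the pointwise inequality $g_j \le g_{j+1}$ on tangent vectors. The only thing to be mindful of is that the inequality $L_j(c) \le L_{j+1}(c)$ holds for the same curve $c$ on both sides, so passing to the infimum over the common class of admissible curves preserves the inequality.
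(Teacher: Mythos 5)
Your proof is correct and is the standard argument: the pointwise monotonicity $g_j(V,V)\le g_{j+1}(V,V)$ from Lemma~\ref{lem:warp-inc} gives $L_j(c)\le L_{j+1}(c)$ for every admissible curve, and taking the infimum over the same class of curves preserves the inequality. The paper itself defers the proof to the cited work of Sormani--Tian--Wang, but this is precisely the intended one-line deduction, so there is nothing to add.
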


\begin{lem}[Uniform Bound for Distance Functions]\label{lem:bound-dj-LE}
Given any $p_1=(r_1,\theta_1,\varphi_1)$ and $p_2=(r_2,\theta_2,\varphi_2)$ in $\Sph^2\times_{f_j}\Sph^1$
as defined in Example \ref{ex:sequence}, 
	\be
d_j(p_1,p_2) \leq 
		|r_1-r_2|+\sin (r_2)\,d_{{\mathbb S}^1}(\theta_1,\theta_2)+f_j(r_2) \,d_{{\mathbb S}^1}(\varphi_1,\varphi_2).
\ee
\end{lem}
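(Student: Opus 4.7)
The plan is to construct an explicit test curve from $p_1$ to $p_2$ whose $g_j$-length matches exactly the three terms on the right-hand side, and then invoke the definition of $d_j$ as an infimum of lengths. The three terms on the right correspond very naturally to moving along the three coordinate directions one at a time.

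Concretely, I would split the trip from $p_1=(r_1,\theta_1,\varphi_1)$ to $p_2=(r_2,\theta_2,\varphi_2)$ into three consecutive legs. First, travel along the $r$-direction from $(r_1,\theta_1,\varphi_1)$ to $(r_2,\theta_1,\varphi_1)$ with $\theta$ and $\varphi$ held fixed; since $g_j(\partial_r,\partial_r)=1$ this leg has $g_j$-length exactly $|r_1-r_2|$. Second, at $r=r_2$, travel along a shortest arc on the $\theta$-circle from $(r_2,\theta_1,\varphi_1)$ to $(r_2,\theta_2,\varphi_1)$; since the $\theta$-coefficient of $g_j$ is $\sin^2(r_2)$ at this fixed $r$, the length is $\sin(r_2)\,d_{\Sph^1}(\theta_1,\theta_2)$. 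Third, at $r=r_2$ and $\theta=\theta_2$, travel along a shortest arc on the $\varphi$-circle to $(r_2,\theta_2,\varphi_2)$; since the $\varphi$-coefficient of $g_j$ is $f_j^2(r_2)$ at this fixed $r$, the length is $f_j(r_2)\,d_{\Sph^1}(\varphi_1,\varphi_2)$. Summing the three contributions and applying Definition~\ref{defn:dj} yields the claimed inequality, up to addressing the regularity of the concatenated curve.

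The one technical point is that Definition~\ref{defn:dj} uses continuously differentiable curves, while the curve I just described is only piecewise $C^1$ (with corners at the two junctions). I would handle this in the standard way: either smooth the corners in arbitrarily small neighborhoods, which increases the length by a quantity that can be taken arbitrarily small, or reparametrize each leg so that velocities vanish at the endpoints and then concatenate to obtain a genuinely $C^1$ curve of the same length. Since the target inequality is a non-strict upper bound, taking the limit of the vanishing correction recovers the stated bound. This is the only real point requiring care; the computation of each leg's length is a direct substitution into the warped product expression for $g_j$.

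I do not expect any substantive obstacle here; the lemma is essentially a diagonal triangle inequality adapted to the product-of-warped-factors form of $g_j$, and the asymmetry between $p_1$ and $p_2$ in the bound (note that the coefficients $\sin(r_2)$ and $f_j(r_2)$ are evaluated at $r_2$ rather than $r_1$) is a direct artifact of the choice to first move in the $r$-direction and only then execute the $\theta$- and $\varphi$-motions at the fixed level $r=r_2$. A symmetric version evaluated at $r_1$ could be obtained by reversing the order of the legs, but the version stated here is the one needed downstream and follows immediately from the three-leg construction.
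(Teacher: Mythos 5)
Your proposal is correct and is essentially the same argument as the one the paper relies on: the cited proof in Sormani--Tian--Wang (recalled in the proof of Lemma~\ref{LemmaDinfty-cLE}) uses exactly the same three-leg concatenation --- radial from $(r_1,\theta_1,\varphi_1)$ to $(r_2,\theta_1,\varphi_1)$, then a $\theta$-arc at $r=r_2$, then a $\varphi$-arc at $r=r_2$ --- with the same length computation from $g_j=dr^2+\sin^2(r)\,d\theta^2+f_j^2(r)\,d\varphi^2$. Your handling of the piecewise-$C^1$ corners is a standard and adequate way to reconcile the construction with Definition~\ref{defn:dj}.
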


\begin{prop}[Uniform Bound for Diameter]\label{prop:diamj}
	For $\Sph^2\times_{f_j}\Sph^1$ as defined in Example \ref{ex:sequence}, 
	 we have 
	 \be
\diam(\Sph^2\times_{f_j}\Sph^1)
\le D_0=(3+2\beta)\pi.
	 \ee
\end{prop}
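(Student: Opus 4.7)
The plan is to reduce the diameter bound to Lemma~\ref{lem:bound-dj-LE} by routing every path through a single fixed basepoint on the equator of $\Sph^2$, where the warping function $f_j$ attains its minimum value uniformly in $j$. This sidesteps the only apparent obstacle in the statement of Lemma~\ref{lem:bound-dj-LE}, namely that its upper bound contains $f_j(r_2)$, which blows up as $r_2\to 0$ or $r_2\to\pi$ and so is not uniformly bounded in $j$.

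First I would evaluate
\be
f_j(\pi/2)=\ln\!\Bigl(\frac{1+a_j}{\sin^2(\pi/2)+a_j}\Bigr)+\beta=\ln(1)+\beta=\beta,
\ee
which is independent of $j$. Next, I fix a basepoint $q_0=(\pi/2,0,0)\in \Sph^2\times\Sph^1$. For any $p=(r,\theta,\varphi)$, I apply Lemma~\ref{lem:bound-dj-LE} with $p_1=p$ and $p_2=q_0$, using $r_2=\pi/2$, to obtain
\be
d_j(p,q_0)\le |r-\pi/2|+\sin(\pi/2)\,d_{\Sph^1}(\theta,0)+f_j(\pi/2)\,d_{\Sph^1}(\varphi,0).
\ee
Since $r\in[0,\pi]$ we have $|r-\pi/2|\le \pi/2$, and each circle distance is at most $\pi$, so
\be
d_j(p,q_0)\le \tfrac{\pi}{2}+\pi+\beta\pi=\bigl(\tfrac{3}{2}+\beta\bigr)\pi.
\ee

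Finally, for any two points $p_1,p_2\in \Sph^2\times\Sph^1$ the triangle inequality together with the previous estimate gives
\be
d_j(p_1,p_2)\le d_j(p_1,q_0)+d_j(q_0,p_2)\le 2\bigl(\tfrac{3}{2}+\beta\bigr)\pi=(3+2\beta)\pi=D_0,
\ee
which yields the claimed diameter bound uniformly in $j$. There is no substantive obstacle; the only subtlety is recognizing that one must not apply Lemma~\ref{lem:bound-dj-LE} directly between $p_1$ and $p_2$ (as the $f_j(r_2)$ factor is unbounded), and instead route through the equatorial slice where $f_j\equiv \beta$ independently of $a_j$.
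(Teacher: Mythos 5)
Your proof is correct and follows essentially the same route the paper takes: the paper cites this proposition to Sormani--Tian--Wang but proves the analogous bound $\diam_\infty \le (3+2\beta)\pi$ for the limit space by exactly this idea of passing through the equatorial slice $r_0=\pi/2$, where $f_j(\pi/2)=f_\infty(\pi/2)=\beta$, via Lemma~\ref{LemmaDinfty-cLE2}. The only cosmetic difference is that you route both points through one fixed basepoint $q_0$ (giving exactly $(3+2\beta)\pi$), whereas the paper moves each point to the equator keeping its own $(\theta,\varphi)$ coordinates and connects them there, which actually yields the slightly sharper bound $(2+\beta)\pi$.
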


\subsection{The Isometric Product Space}

The isometric product of a sphere and a circle is defined as follows:

\begin{example}\label{ex:isom-prod}
The isometric product tensor
on $\Sph^2\times \Sph^1$ is
\be
	g_0=g_{\Sph^2}+ g_{\Sph^1}=dr^2+ \sin^2(r) d\theta^2+d\varphi^2,
	\ee
	written using $(r,\theta)$ coordinates on $\Sph^2$ and $\varphi$ on the $\Sph^1$-fibres.
The isometric product distance,
\be
	d_0(p,q)=d_{g_0}(p,q)=\inf\{L_{g_0}(c)\},
\ee
defined as in
Definition~\ref{defn:dj}.
\end{example}

The following lemma follows easily from
Definition~\ref{DefinitionLengthCurve},
Definition~\ref{defn:dj},
the definition of $g_0$ in
Example~\ref{ex:isom-prod},
and the definition of $g_j$ 
with $f_j\ge \beta\ge 1$ in
Example~\ref{ex:sequence}:

\begin{lem}\label{lem:d_j-ge-d_0}
The distances on Example~\ref{ex:sequence}
and the distances on the isometric product
in Example~\ref{ex:isom-prod}
satisfy
\be
d_j(p,q)\ge d_0(p,q) \quad \forall 
p,q\in \Sph^2\times \Sph^1.
\ee
\end{lem}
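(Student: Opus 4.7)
The plan is to derive the distance inequality from a pointwise comparison of the metric tensors $g_j$ and $g_0$, which then propagates through lengths of curves and finally through the infimum defining the distances.

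First I would verify that $f_j(r)\ge 1$ for all $r\in[0,\pi]$. Since $\sin^2 r+a_j\le 1+a_j$, the logarithm in the definition of $f_j$ is nonnegative, so
\[
f_j(r)=\ln\!\left(\frac{1+a_j}{\sin^2 r+a_j}\right)+\beta\ge \beta\ge 2>1.
\]
Consequently $f_j^2(r)\ge 1$, and for any tangent vector $V$ at any point $(r,\theta,\varphi)$ of $\Sph^2\times\Sph^1$,
\[
g_j(V,V)-g_0(V,V)=\bigl(f_j^2(r)-1\bigr)\bigl(d\varphi(V)\bigr)^2\ge 0.
\]

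Next, given any continuously differentiable curve $c:[0,1]\to \Sph^2\times\Sph^1$, the pointwise inequality $g_j(\dot c,\dot c)\ge g_0(\dot c,\dot c)$ gives $\sqrt{g_j(\dot c,\dot c)}\ge \sqrt{g_0(\dot c,\dot c)}$ at every $t$, so by integrating over $[0,1]$ and using Definition~\ref{DefinitionLengthCurve},
\[
L_j(c)\ge L_0(c).
\]
Finally, taking the infimum over all continuously differentiable curves from $p$ to $q$ and applying Definition~\ref{defn:dj} to both sides yields $d_j(p,q)\ge d_0(p,q)$.

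There is no serious obstacle here: the argument is just the standard fact that a pointwise inequality of Riemannian metric tensors produces the same inequality for Riemannian distances. The only thing worth checking carefully is the bound $f_j\ge 1$, which is where the hypothesis $\beta\ge 2$ (or merely $\beta\ge 1$) is used; the rest follows by direct integration and taking infima.
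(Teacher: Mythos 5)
Your proposal is correct and is exactly the argument the paper intends: the paper simply notes that the lemma follows from the definitions of length and distance together with $f_j\ge\beta\ge 1$, which is precisely your pointwise comparison $g_j(V,V)\ge g_0(V,V)$ propagated through lengths and the infimum. No differences to report.
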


The following two lemmas are well known:

\begin{lem}\label{lem:iso-Pyth}
The isometric product space
on $(\Sph^2\times \Sph^1, d_0)$
satisfies the Pythagorean Theorem:
 \be\label{eq-isom-sqrt}
 d_0((r_p,\theta_p,\varphi_p),(r_q,\theta_q,\varphi_q))=\sqrt{d_{\Sph^2}((r_p,\theta_p),(r_q,\theta_q))^2 + d_{\Sph^1}(\varphi_p,\varphi_q)^2}.
 \ee
 so
 \be \label{eq-isom-r}
 d_0((r_p,\theta_p,\varphi_p),(r_q,\theta_q,\varphi_q)) \ge d_{\Sph^2}((r_p,\theta_p),(r_q,\theta_q))\ge |r_p-r_q|
 \ee
 and
 \be \label{eq-isom-phi}
 d_0((r_p,\theta_p,\varphi_p),(r_q,\theta_q,\varphi_q)) \ge d_{\Sph^{1}}(\varphi_p,\varphi_q).
 \ee
\end{lem}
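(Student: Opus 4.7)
The plan is to establish the Pythagorean identity (\ref{eq-isom-sqrt}) first, and then derive the inequalities (\ref{eq-isom-r}) and (\ref{eq-isom-phi}) as immediate consequences. The key observation is that because $g_0 = g_{\Sph^2} + g_{\Sph^1}$ is block-diagonal with respect to the product splitting, any continuously differentiable curve $c(t) = (\alpha(t), \beta(t))$ in $\Sph^2 \times \Sph^1$ satisfies the pointwise identity $g_0(\dot c, \dot c) = g_{\Sph^2}(\dot\alpha, \dot\alpha) + g_{\Sph^1}(\dot\beta, \dot\beta)$. I would then prove the two halves of (\ref{eq-isom-sqrt}) separately.

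For the upper bound, I would fix $\epsilon > 0$ and choose near-minimizing curves $\alpha:[0,1]\to \Sph^2$ from $(r_p,\theta_p)$ to $(r_q,\theta_q)$ and $\beta:[0,1]\to\Sph^1$ from $\varphi_p$ to $\varphi_q$, each parameterized proportional to arclength so that the speeds $|\dot\alpha|_{g_{\Sph^2}} \equiv L_1$ and $|\dot\beta|_{g_{\Sph^1}} \equiv L_2$ are constants within $\epsilon$ of the respective factor distances. The product curve $c=(\alpha,\beta)$ then has constant $g_0$-speed $\sqrt{L_1^2 + L_2^2}$ and hence $L_{g_0}(c) = \sqrt{L_1^2 + L_2^2}$; letting $\epsilon \to 0$ yields the desired upper bound on $d_0$.

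For the lower bound, given any curve $c=(\alpha,\beta)$ joining the two points, I would apply Minkowski's integral inequality to the nonnegative vector-valued function $t \mapsto (|\dot\alpha(t)|_{g_{\Sph^2}},\, |\dot\beta(t)|_{g_{\Sph^1}})$ on $[0,1]$ to obtain
\[
L_{g_0}(c) = \int_0^1 \sqrt{|\dot\alpha|_{g_{\Sph^2}}^2 + |\dot\beta|_{g_{\Sph^1}}^2}\, dt \ge \sqrt{L_{g_{\Sph^2}}(\alpha)^2 + L_{g_{\Sph^1}}(\beta)^2} \ge \sqrt{d_{\Sph^2}((r_p,\theta_p),(r_q,\theta_q))^2 + d_{\Sph^1}(\varphi_p,\varphi_q)^2},
\]
where the last step uses that $\alpha$ and $\beta$ connect the respective projected endpoints in the factors. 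Taking the infimum over all admissible $c$ completes the proof of (\ref{eq-isom-sqrt}).

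Finally, the inequalities (\ref{eq-isom-r}) and (\ref{eq-isom-phi}) follow immediately from $\sqrt{a^2 + b^2} \ge \max(a,b)$ applied to (\ref{eq-isom-sqrt}), together with the standard spherical fact $d_{\Sph^2}((r_p,\theta_p),(r_q,\theta_q)) \ge |r_p - r_q|$, which holds because the projection $(r,\theta)\mapsto r$ is $1$-Lipschitz with respect to $g_{\Sph^2} = dr^2 + \sin^2 r\, d\theta^2$. The only nontrivial ingredient in the whole argument is the Minkowski inequality step in the lower bound; this is the main (though mild) obstacle, everything else being a direct consequence of the product form of $g_0$.
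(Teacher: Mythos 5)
Your proof is correct. The paper offers no argument for this lemma at all—it is introduced with ``The following two lemmas are well known'' and left unproved—so there is nothing to compare against; your argument (block-diagonal splitting of $g_0$, a constant-speed product curve for the upper bound, Minkowski's integral inequality for the lower bound, and the $1$-Lipschitz projection $(r,\theta)\mapsto r$ for the final estimate) is the standard complete proof of the Pythagorean property of a Riemannian product. The only simplification available is that, since $\Sph^2$ and $\Sph^1$ are compact, minimizing geodesics exist in each factor and the $\epsilon$-approximation in your upper bound is unnecessary, though harmless.
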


\begin{lem}\label{lem:iso-cmpct}
The isometric product space
on $(\Sph^2\times \Sph^1, d_0)$
is compact.
\end{lem}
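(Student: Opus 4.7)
The plan is to reduce compactness of $(\Sph^2\times \Sph^1, d_0)$ to compactness of each factor by showing that $d_0$ generates the standard product topology. Since $(\Sph^2, d_{\Sph^2})$ and $(\Sph^1, d_{\Sph^1})$ are each compact Riemannian manifolds, the result will then follow either from Tychonoff's theorem or, more concretely, by sequential compactness.

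First I would use the Pythagorean identity from Lemma~\ref{lem:iso-Pyth} to show two-sided control: from \eqref{eq-isom-r} and \eqref{eq-isom-phi} one gets
\begin{equation}
\max\bigl\{d_{\Sph^2}((r_p,\theta_p),(r_q,\theta_q)),\, d_{\Sph^1}(\varphi_p,\varphi_q)\bigr\} \;\le\; d_0(p,q),
\end{equation}
while \eqref{eq-isom-sqrt} gives
\begin{equation}
d_0(p,q) \;\le\; d_{\Sph^2}((r_p,\theta_p),(r_q,\theta_q)) + d_{\Sph^1}(\varphi_p,\varphi_q).
\end{equation}
These two inequalities together show that a sequence $p_n = (x_n, \varphi_n)$ converges in $d_0$ if and only if $x_n \to x$ in $(\Sph^2, d_{\Sph^2})$ and $\varphi_n \to \varphi$ in $(\Sph^1, d_{\Sph^1})$.

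Now given any sequence $\{p_n\}=\{(x_n,\varphi_n)\}$ in $\Sph^2\times \Sph^1$, compactness of $(\Sph^2,d_{\Sph^2})$ yields a subsequence with $x_{n_k}\to x_\infty$, and compactness of $(\Sph^1,d_{\Sph^1})$ yields a further subsequence with $\varphi_{n_{k_\ell}}\to \varphi_\infty$. By the equivalence above, $p_{n_{k_\ell}}\to (x_\infty,\varphi_\infty)$ in $d_0$, proving sequential compactness, hence compactness of the metric space.

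There is no substantial obstacle here; this is entirely standard. The only point worth writing out explicitly is the topological equivalence of $d_0$ with the product topology, since all later arguments (for instance the identification of the Gromov--Hausdorff limit as homeomorphic to the isometric product) rely on the fact that $d_0$-convergence is coordinatewise convergence.
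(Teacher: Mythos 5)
Your proof is correct and is the standard argument; the paper states this lemma as well known and omits any proof, so your write-up simply supplies the routine details (the two-sided comparison of $d_0$ with the factor metrics via Lemma~\ref{lem:iso-Pyth}, followed by extraction of convergent subsequences in each factor). Nothing further is needed.
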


The following new lemma will be useful to us later:

 \begin{lem}
\label{LemmaThetaInequality}
Fix $p=(r_p,\theta_p,\varphi_p)\in (\Sph^2 \times \Sph^1)\backslash S$ and choose any
\be
{\delta\in \left(0,\min\left\{\tfrac{r_p}{2}, \tfrac{\pi-r_p}{2}\right\}\right)}
\ee
For any $q\in \Sph^2 \times \Sph^1$ such that $d_0(p,q)<\delta$, we have
\be
d_{\Sph^1}(\theta_p,\theta_q)<\frac{\delta}{\sin (r_p/2)}.
\ee
\end{lem}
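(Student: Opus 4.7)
The strategy is to translate the hypothesis $d_0(p,q) < \delta$ into a length bound on an almost-minimizing curve and then decompose the isometric-product length along the three orthogonal pieces of $g_0 = dr^2 + \sin^2(r)\,d\theta^2 + d\varphi^2$, reading off information about each coordinate separately. The crucial input is that the constraint $\delta < \min\{r_p/2,\,(\pi - r_p)/2\}$ traps any short curve inside a slab of $r$-values where $\sin r$ has a positive lower bound.

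First I would use Definition~\ref{defn:dj} to produce a $C^1$ curve $c:[0,1]\to \Sph^2\times \Sph^1$ from $p$ to $q$ with $L_{g_0}(c) < \delta$, writing $c(t) = (r(t),\theta(t),\varphi(t))$. From the pointwise inequality $\sqrt{\dot r^2 + \sin^2(r)\dot\theta^2 + \dot\varphi^2} \ge |\dot r|$ one gets $\int_0^1 |\dot r|\,dt < \delta$, and hence $|r(t) - r_p| < \delta$ for every $t$. Combined with the hypothesis on $\delta$, this forces $r(t) \in (r_p/2,\,(\pi+r_p)/2)$, so $c$ stays uniformly away from the singular fibres $S$. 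Concavity of $\sin$ on $[0,\pi]$ then gives the uniform lower bound $\sin r(t) \ge \sin(r_p/2)$ throughout the curve in the intended regime (the endpoint value $\sin(r_p/2)$ is the relevant minimum).

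Second, using the complementary inequality $\sqrt{\dot r^2 + \sin^2(r)\dot\theta^2 + \dot\varphi^2} \ge \sin(r)\,|\dot\theta|$ together with this lower bound,
\[
\sin(r_p/2)\int_0^1 |\dot\theta|\,dt \;\le\; \int_0^1 \sin(r(t))\,|\dot\theta|\,dt \;\le\; L_{g_0}(c) \;<\; \delta,
\]
which rearranges to $\int_0^1 |\dot\theta|\,dt < \delta/\sin(r_p/2)$. The conclusion is then immediate from the elementary fact that the total variation of the $\Sph^1$-valued function $\theta(t)$ dominates the intrinsic circular distance between its endpoints, giving $d_{\Sph^1}(\theta_p,\theta_q) \le \int_0^1 |\dot\theta|\,dt < \delta/\sin(r_p/2)$.

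The main (indeed only) technical point is justifying the uniform lower bound $\sin r(t) \ge \sin(r_p/2)$: once $r(t)$ is trapped in $(r_p/2,\,(\pi+r_p)/2)$ by the one-dimensional estimate, concavity of $\sin$ reduces matters to comparing the two endpoint values $\sin(r_p/2)$ and $\sin((\pi+r_p)/2) = \cos(r_p/2)$, and the two-sided constraint on $\delta$ is precisely what ensures the endpoint comparison delivers $\sin(r_p/2)$ as the infimum. Everything else is routine coordinate bookkeeping.
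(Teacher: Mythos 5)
Your strategy is the same as the paper's: trap $r(t)$ in a $\delta$-neighborhood of $r_p$ using the $dr^2$ part of $g_0$, lower-bound $\sin r(t)$ there, and convert the length bound into a bound on $\int_0^1|\dot\theta|\,dt$ via $\sin(r(t))\,|\dot\theta|\le|\dot c|_{g_0}$. The minor deviations (an almost-minimizing curve instead of the geodesic; deriving $|r(t)-r_p|<\delta$ from $\int_0^1|\dot r|\,dt$ rather than from the projection inequality (\ref{eq-isom-r})) are harmless.

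The gap is exactly at the step you flag as the main technical point. On $[r_p/2,(\pi+r_p)/2]$ the two endpoint values of $\sin$ are $\sin(r_p/2)$ and $\cos(r_p/2)$, and $\sin(r_p/2)\le\cos(r_p/2)$ holds if and only if $r_p\le\pi/2$; the two-sided constraint on $\delta$ does not ensure this. For $r_p>\pi/2$ the infimum is $\cos(r_p/2)<\sin(r_p/2)$, and the curve genuinely lives in the offending region: for $r_p=3\pi/4$ one already has $\sin(r_p)=\tfrac{\sqrt2}{2}<\sin(3\pi/8)=\sin(r_p/2)$ at $t=0$. In fact the inequality as stated fails there: with $p=(3\pi/4,0,\varphi_0)$, $q=(3\pi/4,\alpha,\varphi_0)$, $\alpha=1.3\,\delta$ and $\delta$ small, one computes $d_0(p,q)\approx\alpha\sin(3\pi/4)\approx0.92\,\delta<\delta$ while $d_{\Sph^1}(\theta_p,\theta_q)=1.3\,\delta>\delta/\sin(3\pi/8)\approx1.08\,\delta$. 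The paper's proof handles this by first declaring ``by symmetry we may assume $r_p\in(0,\pi/2]$,'' which shrinks the interval to $(r_p/2,3r_p/2)$ where the endpoint comparison does return $\sin(r_p/2)$; but note that the reflection $r\mapsto\pi-r$ turns $\sin(r_p/2)$ into $\sin((\pi-r_p)/2)=\cos(r_p/2)$, so what either argument actually establishes is the bound with $\sin\bigl(\tfrac{1}{2}\min\{r_p,\pi-r_p\}\bigr)$ in the denominator. You should prove that corrected statement (or restrict to $r_p\le\pi/2$ and record the reflected constant for the other case); the corrected constant is still a positive number depending only on $p$, which is all that the downstream application in Lemma~\ref{lem:cont-off-S} needs.
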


\begin{proof}
By symmetry, we may assume  $r_p\in (0,\pi/2]$.
Let $c:[0,1]\to (\Sph^2\times \Sph^1, g_{\Sph^2}+g_{\Sph^1})$ be the geodesic connecting $p$ and $q$. In coordinate we have 
\be
c(t)=(r(t),\theta(t),\varphi(t))\text{ for }t\in [0,1].
\ee

Next, we prove that 
\be \label{eq:next}
r(t) \in (r_p/2,3r_p/2)
\subset (r_p/2,\pi-r_p/2)\subset (0,\pi)
\text{ for }t\in [0,1].
\ee
By (\ref{eq-isom-r}), we have for $t\in[0,1]$
\be
|r_p-r(t) |\le d_{\Sph^2}((r_p,\theta_p),(r(t),\theta(t)))\le d_0(p,c(t))<\delta,
\ee
which by $\delta<r_p/2$ and the triangle inequality implies
\be
r_p/2 <r_p-\delta< r(t)< r_p+\delta<3r_p/2.
\ee
Since $r_p\in (0, \pi/2]$ we have
$$
3r_p/2\le 3\pi/4\le \pi-r_p,
$$
so we have
our claim in (\ref{eq:next}).

By (\ref{eq:next}) and the concavity and symmetry of the sine function
on $(0,\pi)$, we have
\be\label{InequalityRq}
\sin r(t) \geq \left(\sin \left(\tfrac{r_p}{2}\right)
+\sin \left(\pi -\tfrac{r_p}{2}\right) \right)/2
=\sin \left(\tfrac{r_p}{2}\right)
\textrm{ for } t\in[0,1].
\ee

Through direct calculation we have
\begin{equation}
\begin{split}
d_{\Sph^1}(\theta_p,\theta_q) &\leq \int_0^1 |\theta'(t)| \,dt
\leq \frac{1}{\sin (r_p/2)}\int_0^1 \sin r(t) |\theta'(t)| \,dt
\\
&\leq \frac{1}{\sin (r_p/2)}\int_0^1 \sqrt{(r'(t))^2+(\sin r(t) \theta'(t))^2 +(\varphi'(t))^2}dt\\
&= \frac{d_0 (p,q)}{ \sin (r_p/2)}\leq \frac{\delta}{\sin (r_p/2)}
\end{split}
\end{equation}
which completes the claim.
\end{proof}

%%%%%%%%%%%

\section{Pointwise Convergence and Homeomorphism}

In this section, we introduce the metric
space, $(\Sph^2\times \Sph^1,d_\infty)$,
and prove it is homeomorphic to the isometric
product space in Theorem~\ref{thm:ptwise-homeo}.
We will later prove in Theorem~\ref{thm:GH}
that this metric
space is the $GH$ limit
of $(\Sph^2\times \Sph^1,d_j)$.
Recall that $d_j=d_{g_j}$ are the Riemannian
distances as in Definition~\ref{defn:dj}
of the sequence of Riemannian manifolds 
in Example~\ref{ex:sequence}.

\begin{thm}\label{thm:ptwise-homeo}
There is a metric space $(\Sph^2\times \Sph^1,d_\infty)$
defined by taking the pointwise
limit of the monotone increasing sequence
\be \label{eq:mono-lim}
d_\infty(x,y)=\lim_{j\to \infty}d_j(x,y) =
\sup_{j\in {\mathbb N}} d_j(x,y)
\ee 
where $d_j$ are the distance functions,
of the warped product Riemannian manifolds in Example~\ref{ex:sequence} as in Definition~\ref{defn:dj}. 
Furthermore this 
pointwise limit space,
$(\Sph^2\times \Sph^1,d_\infty)$,
is homeomorphic to the
isometric product space,
$(\Sph^2\times \Sph^1,d_0)$,
of Example~\ref{ex:isom-prod}.
\end{thm}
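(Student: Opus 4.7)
The plan is to first verify that $d_\infty$ is a well-defined metric on $\Sph^2\times\Sph^1$, and then establish the homeomorphism by proving the identity map is continuous in both directions.

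For the well-definedness, the sequence $\{d_j(x,y)\}$ is monotone increasing in $j$ by Lemma~\ref{LemDistanceFunctionsInc} and uniformly bounded above by $D_0=(3+2\beta)\pi$ via Proposition~\ref{prop:diamj}, so the supremum (equal to the pointwise limit) in \eqref{eq:mono-lim} exists. Symmetry and the triangle inequality pass to the limit. Positivity follows from the chain $d_\infty(x,y)\ge d_j(x,y)\ge d_0(x,y)$, whose second inequality is Lemma~\ref{lem:d_j-ge-d_0}; the same chain immediately shows that $\mathrm{id}:(\Sph^2\times\Sph^1,d_\infty)\to(\Sph^2\times\Sph^1,d_0)$ is $1$-Lipschitz, which handles one direction of the required continuity.

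For the other direction I need to show that $p_n\to p$ in $d_0$ implies $d_\infty(p_n,p)\to 0$. If $p\notin S$, I apply Lemma~\ref{lem:bound-dj-LE} with $p_2=p$ and take the supremum in $j$:
\[ d_\infty(p_n,p)\le |r_n-r_p| + \sin(r_p)\,d_{\Sph^1}(\theta_n,\theta_p) + f_\infty(r_p)\,d_{\Sph^1}(\varphi_n,\varphi_p), \]
which is legitimate because $f_\infty(r_p)<\infty$ for $r_p\in(0,\pi)$. Lemma~\ref{lem:iso-Pyth} forces the first and third terms to zero as $n\to\infty$, and Lemma~\ref{LemmaThetaInequality} does the same for the middle term.

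The main obstacle is the case $p\in S$, where $f_\infty(r_p)=\infty$ renders the bound above useless. Assume $r_p=0$ (the case $r_p=\pi$ is entirely symmetric). Fix an auxiliary radius $\epsilon\in(0,\pi/2)$. For all $n$ large enough that $r_n<\epsilon$, I will build a piecewise-$C^1$ competitor path from $p_n=(r_n,\theta_n,\varphi_n)$ to $p=(0,\theta_p,\varphi_p)$ in three segments: first radially push $(r_n,\theta_n,\varphi_n)\to(\epsilon,\theta_n,\varphi_n)$, of $g_j$-length $\epsilon-r_n$; then rotate on the fiber $(\epsilon,\theta_n,\varphi_n)\to(\epsilon,\theta_n,\varphi_p)$, of $g_j$-length $f_j(\epsilon)\,d_{\Sph^1}(\varphi_n,\varphi_p)$; then radially collapse $(\epsilon,\theta_n,\varphi_p)\to(0,\theta_n,\varphi_p)$, of $g_j$-length $\epsilon$. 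Because all $\theta$-values are identified at the pole $r=0$ in $\Sph^2$, the endpoint $(0,\theta_n,\varphi_p)$ agrees with $p$. Smoothing the corners adds negligible length, so after taking the supremum in $j$ one obtains
\[ d_\infty(p_n,p)\le 2\epsilon - r_n + f_\infty(\epsilon)\,d_{\Sph^1}(\varphi_n,\varphi_p). \]
For each fixed $\epsilon$ the right side tends to $2\epsilon$ as $n\to\infty$, so $\limsup_n d_\infty(p_n,p)\le 2\epsilon$; letting $\epsilon\to 0^+$ finishes the argument. The essential trick is that the detour exchanges the unbounded warping factor $f_\infty(0)=\infty$ for the bounded $f_\infty(\epsilon)$ at the cost of only $2\epsilon$ in radial length.
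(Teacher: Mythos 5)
Your proof is correct and follows the same overall architecture as the paper's: monotone convergence plus the uniform diameter bound for existence of the limit, the lower bound $d_\infty\ge d_0$ for continuity of the inverse, and a case split at the singular set for continuity of the forward map, with the regular case handled exactly as in Lemma~\ref{lem:cont-off-S} via the three-segment length bound together with Lemma~\ref{LemmaThetaInequality}. The one genuine difference is your treatment of $p\in S$. The paper (Lemma~\ref{lem:cont-in-S}) also detours through the slice $r=\delta$, but it chooses $\delta$ quantitatively as a function of $\epsilon$, which forces it to invoke the asymptotic estimate $f_\infty(\delta)\le\delta^{-1/3}+\beta$ of Lemma~\ref{lem:lnsinx} to guarantee that $f_\infty(\delta)\,\delta\to 0$. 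You instead decouple the detour radius $\epsilon$ from the approximating sequence: for fixed $\epsilon$ the factor $f_\infty(\epsilon)$ is a finite constant multiplying $d_{\Sph^1}(\varphi_n,\varphi_p)\to 0$, so $\limsup_{n}d_\infty(p_n,p)\le 2\epsilon$, and then $\epsilon\to 0^+$. This renders Lemma~\ref{lem:lnsinx} unnecessary for the homeomorphism (the paper still needs it later for the Hausdorff measure computation on $S$), and your path, which never moves in $\theta$ because $\theta$ is irrelevant at the pole, also avoids the separate $\theta$-term the paper must bound. Both arguments are sound; yours is slightly shorter and more qualitative, while the paper's version produces an explicit modulus of continuity $(7+\beta)\delta^{2/3}$.
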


\begin{rmrk}\label{rmrk:d-vs-g}
We
do not yet know how $d_\infty$ 
of Theorem~\ref{thm:ptwise-homeo}
is
related to $g_\infty$ of Example~\ref{ex:limit}.
We will see this relationship
later when we prove
Theorem~\ref{thm:completion}.
\end{rmrk}

We break the proof of Theorem~\ref{thm:ptwise-homeo}
into two propositions that are proven within this section.
First we prove pointwise convergence in Proposition~\ref{prop:ptlim-exists}.  Then we prove various useful lemmas about $d_\infty$
and a couple of
general technical lemmas.
Finally, we prove the homeomorphism in 
Proposition~\ref{prop:ptwise-homeo} using these lemmas.

\subsection{Pointwise Convergence}

First we show that the pointwise limit
in Theorem~\ref{thm:ptwise-homeo}
exists and is a definite metric space:

\begin{prop}\label{prop:ptlim-exists}
The sequence of
Riemannian
distance functions, $d_j=d_{g_j}$,
of Example~\ref{ex:sequence} defined as in Definition~\ref{defn:dj}
are monotone increasing and
converge pointwise as in (\ref{eq:mono-lim})
to a distance
function,
$d_\infty$, on
$\Sph^2\times \Sph^1$
.
\end{prop}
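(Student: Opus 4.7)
The proof is essentially a bookkeeping exercise combining three earlier results, so the plan is short. First I would invoke Lemma~\ref{LemDistanceFunctionsInc} to conclude that for each fixed pair $(p,q)\in \Sph^2\times\Sph^1$ the real sequence $\{d_j(p,q)\}_{j\in\mathbb{N}}$ is monotone non-decreasing. Then I would invoke Proposition~\ref{prop:diamj} to obtain the uniform upper bound $d_j(p,q)\le D_0=(3+2\beta)\pi$ independent of $j$. The monotone convergence theorem for bounded monotone real sequences then guarantees that the pointwise limit
\[
d_\infty(p,q)=\lim_{j\to\infty} d_j(p,q)=\sup_{j\in\mathbb{N}} d_j(p,q)
\]
exists as a finite real number, establishing (\ref{eq:mono-lim}).

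Next I would verify that $d_\infty$ satisfies the axioms of a distance function on $\Sph^2\times\Sph^1$. Symmetry, non-negativity, the vanishing on the diagonal $d_\infty(p,p)=0$, and the triangle inequality all pass to the pointwise limit from the corresponding properties of each Riemannian distance $d_j$, since each of these axioms is expressed by a non-strict inequality or equality that is preserved under taking limits of convergent sequences of real numbers.

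The only axiom that requires a separate argument is definiteness, namely that $d_\infty(p,q)=0$ implies $p=q$. Here I would apply Lemma~\ref{lem:d_j-ge-d_0}, which gives $d_j(p,q)\ge d_0(p,q)$ for every $j$, and therefore
\[
d_\infty(p,q)=\sup_{j\in\mathbb{N}} d_j(p,q)\ge d_0(p,q).
\]
Since $d_0$ is the genuine metric of the isometric product space (Example~\ref{ex:isom-prod}, Lemma~\ref{lem:iso-Pyth}), the right side is strictly positive whenever $p\ne q$, so $d_\infty(p,q)>0$ in that case. This completes the proof. The only potential obstacle, and it is a mild one, is remembering to establish definiteness by comparison with $d_0$ rather than trying to extract it directly from the warped product structure; all other axioms are automatic from the monotone limit.
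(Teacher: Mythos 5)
Your proof is correct and follows the same main line as the paper: monotonicity from Lemma~\ref{LemDistanceFunctionsInc}, the uniform bound $D_0$ from Proposition~\ref{prop:diamj}, and the Monotone Convergence Theorem to get the pointwise limit in (\ref{eq:mono-lim}). The only place you diverge is in verifying that $d_\infty$ is actually a metric: the paper simply cites Lemma 2.5 of Perales-Sormani \cite{PS-Monotone} for this, whereas you check the axioms directly, observing that symmetry, the triangle inequality, and $d_\infty(p,p)=0$ pass to pointwise limits, and obtaining definiteness from the lower bound $d_\infty \ge d_0$ of Lemma~\ref{lem:d_j-ge-d_0}. Your self-contained argument is valid and is essentially what the cited general lemma does internally (there one bounds below by $d_1$, which is already a genuine metric, rather than by $d_0$); the paper's citation buys generality for arbitrary monotone sequences, while your version keeps the paper self-contained for this specific example. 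No gaps.
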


\begin{proof}
Fix $p,q\in \Sph^2\times \Sph^1$.
We have already shown the
$d_j(p,q)$ is a monotone increasing sequence of real numbers in Lemma~\ref{LemDistanceFunctionsInc} that is uniformly bounded above by the uniform upper bound, $D_0$, on the diameter of these spaces as seen in Proposition~\ref{prop:diamj}.
By the Monotone Convergence Theorem of sequences of real numbers we have (\ref{eq:mono-lim}).   

To see that $(\Sph^2\times \Sph^1,d_\infty)$ defines a definite metric space of diameter $\le D_0$, see, for example, Lemma 2.5 of \cite{PS-Monotone} by Perales-Sormani.
\end{proof}

 \subsection{Estimates on Distances}
  
In this section we prove some useful bounds on the
pointwise limit, $d_\infty$, of Theorem~\ref{thm:ptwise-homeo} that we will apply in a few places later.   See Figure~\ref{fig:STW-le}.

\begin{figure}[h] %  figure placement: here, top, bottom, or page
   \centering
\includegraphics[width=3in]{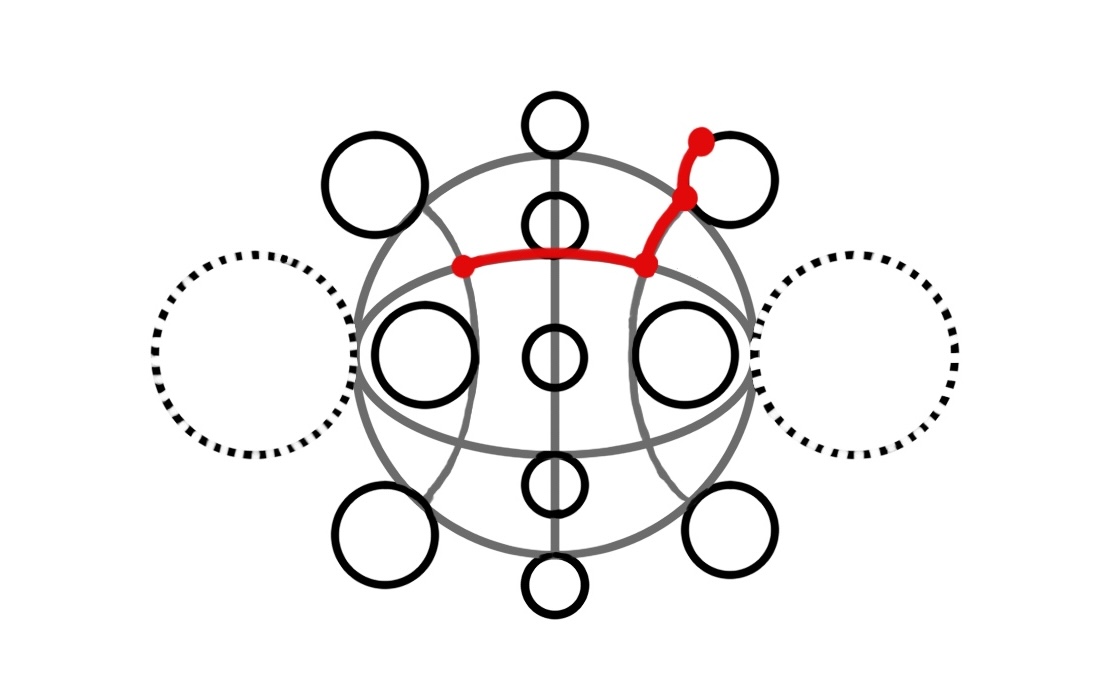} 
   \caption{ The pointwise limit space, $(\Sph^2\times \Sph^1, d_\infty)$, 
   is a metric space in which the distances between
   points, even those in
   the circular fibers, $r^{-1}(0)$ and $r^{-1}(\pi)$, can be estimated
   using Lemma~\ref{LemmaDinfty-cLE}.
   }
   \label{fig:STW-le}
\end{figure}

\begin{lem} \label{LemmaDinfty-cLE}
Given any point $p_1=(r_1,\theta_1,\varphi_1)\in \Sph^2\times \Sph^1$ and any point $p_2=(r_2,\theta_2,\varphi_2)\in \Sph^2\times \Sph^1\setminus S$ so that $r_2\in (0,\pi)$,
we have 
	\be
d_\infty(p_1,p_2) \leq 
		|r_1-r_2|+\sin (r_2)\,d_{{\mathbb S}^1}(\theta_1,\theta_2)+f_\infty(r_2) \,d_{{\mathbb S}^1}(\varphi_1,\varphi_2).
\ee
\end{lem}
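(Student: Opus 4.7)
The plan is to deduce this estimate for $d_\infty$ directly from the analogous estimate for the sequence $d_j$ in Lemma~\ref{lem:bound-dj-LE}, by passing to the limit $j \to \infty$.

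First I would apply Lemma~\ref{lem:bound-dj-LE} to the points $p_1$ and $p_2$, which immediately gives, for every $j \in \mathbb{N}$,
\[
d_j(p_1,p_2) \leq |r_1-r_2| + \sin(r_2)\, d_{\Sph^1}(\theta_1,\theta_2) + f_j(r_2)\, d_{\Sph^1}(\varphi_1,\varphi_2).
\]
Next I would pass to the limit as $j \to \infty$. By Proposition~\ref{prop:ptlim-exists}, the left-hand side converges monotonically to $d_\infty(p_1,p_2)$. For the right-hand side, the hypothesis $r_2 \in (0,\pi)$ is crucial: it guarantees $\sin^2 r_2 > 0$, so from the explicit formula
\[
f_j(r_2) = \ln\!\left(\frac{1+a_j}{\sin^2 r_2 + a_j}\right) + \beta,
\]
and the fact that $a_j \searrow 0$, one has $f_j(r_2) \to f_\infty(r_2) = -2\ln\sin r_2 + \beta$ (this is also a direct consequence of the smooth convergence in Proposition~\ref{prop:warp-conv}, since $p_2 \notin S$).

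Combining these two limits yields the claimed bound on $d_\infty(p_1,p_2)$. There is no real obstacle here: the inequality is preserved under taking limits, and the only subtlety is ensuring the warping factor $f_j(r_2)$ has a finite pointwise limit, which is exactly why the hypothesis $p_2 \notin S$ (equivalently $r_2 \in (0,\pi)$) is imposed. No such assumption is needed on $p_1$ because $p_1$ enters the bound only through the coordinate differences $|r_1-r_2|$, $d_{\Sph^1}(\theta_1,\theta_2)$, and $d_{\Sph^1}(\varphi_1,\varphi_2)$, all of which are finite and $j$-independent.
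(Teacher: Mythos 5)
Your proposal is correct and matches the paper's argument: both start from the curve-concatenation estimate of Lemma~\ref{lem:bound-dj-LE} and pass to the limit $j\to\infty$, the only cosmetic difference being that the paper first replaces $f_j(r_2)$ by $f_\infty(r_2)$ via monotonicity and then limits only the left-hand side, whereas you limit both sides using $f_j(r_2)\to f_\infty(r_2)$. Either route is valid and they are essentially the same proof.
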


\begin{proof}
Recall that in \cite{STW-Extreme}, Sormani-Tian-Wang estimated $d_j(p_1,p_2)$ using Definition~\ref{defn:dj} and a specific curve running between the points
as depicted in Figure~\ref{fig:STW-le}.
The curve is the concatenation of three curves:
\be
d_j(p_1,p_2)\le L_{g_j}(C_1)+L_{g_j}(C_2)
+L_{g_j}(C_3)
\ee
where the curve, $C_1$,
runs radially from
$p_1=(r_1,\theta_1,\varphi_1)$ to a point
$(r_2,\theta_1,\varphi_1)$
, and then $C_2$ runs 
in the $\theta$ direction
to $(r_2,\theta_2,\varphi_1)$ and then $C_3$
runs around the fiber to $p_2=(r_2,\theta_2,\varphi_2)$.
Estimating these lengths using
\be
g_j=dr^2+\sin^2(r)d\theta^2+f_j^2(r)d\varphi^2,
\ee
they got
\be \label{eq:here-bound-dj-LE}
d_j(p_1,p_2) \leq 
		|r_1-r_2|+\sin (r_2)\,d_{{\mathbb S}^1}(\theta_1,\theta_2)+f_j(r_2) \,d_{{\mathbb S}^1}(\varphi_1,\varphi_2).
\ee
By Lemma~\ref{lem:warp-inc}, we know
$f_j(r_2) \le f_\infty(r_2)$.  Substituting this into
(\ref{eq:here-bound-dj-LE}),
we have
\be
d_j(p_1,p_2) \leq 
		|r_1-r_2|+\sin (r_2)\,d_{{\mathbb S}^1}(\theta_1,\theta_2)+f_\infty(r_2) \,d_{{\mathbb S}^1}(\varphi_1,\varphi_2).
\ee
Our lemma follows by taking the pointwise limit as $j\to\infty$ of this last inequality.
\end{proof}

If we have a pair of points which both lie in the singular set, or are close to the singular set, we use the triangle inequality to
estimate the distance between them:

\begin{lem} \label{LemmaDinfty-cLE2}
Given any pair of points, $p=(r_p,\theta_p,\varphi_p)$ and $q=(r_q,\theta_q,\varphi_q)$ in $\Sph^2\times \Sph^1$ and any $r_0\in (0,\pi)$,
we have
\be
d_\infty(p,q)\le
|r_p-r_0|+|r_q-r_0|+
\sin (r_0)\,
d_{{\mathbb S}^1}(\theta_p,\theta_q)+
f_\infty(r_0)\,
d_{{\mathbb S}^1}(\varphi_1,\varphi_2)
\ee
\end{lem}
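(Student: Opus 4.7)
The plan is to reduce this to a triple application of the previous lemma, \lemref{LemmaDinfty-cLE}, by inserting two intermediate points that lie on the $r = r_0$ slice (which is disjoint from the singular set $S$ since $r_0 \in (0,\pi)$). Specifically, I would set
\[
p' = (r_0, \theta_p, \varphi_p) \qquad \text{and} \qquad q' = (r_0, \theta_q, \varphi_q),
\]
and apply the triangle inequality for $d_\infty$ in the form
\[
d_\infty(p,q) \le d_\infty(p,p') + d_\infty(p', q') + d_\infty(q', q).
\]

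Next, I would bound each of the three terms using \lemref{LemmaDinfty-cLE}, noting that in each case the second point has radial coordinate $r_0 \in (0,\pi)$ so the hypothesis is satisfied. The first term gives only $|r_p - r_0|$ since $p$ and $p'$ share the same $\theta$ and $\varphi$ coordinates; similarly the third term gives $|r_q - r_0|$. For the middle term, $p'$ and $q'$ share the radial coordinate $r_0$, so the $|r_1 - r_2|$ contribution vanishes and we are left with exactly
\[
\sin(r_0)\, d_{\Sph^1}(\theta_p, \theta_q) + f_\infty(r_0)\, d_{\Sph^1}(\varphi_p, \varphi_q).
\]
Summing these three estimates yields the claimed inequality.

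There is really no obstacle here — the role of this lemma is purely to handle the case where one or both endpoints lie in (or near) the singular set $S$, where the earlier lemma cannot be applied directly because $f_\infty$ blows up at $r = 0, \pi$. By routing the path through the regular slice $\{r = r_0\}$, we trade the undefined quantity $f_\infty(r_p)$ or $f_\infty(r_q)$ for the finite quantity $f_\infty(r_0)$, at the cost of the two radial excursions $|r_p - r_0|$ and $|r_q - r_0|$. The only thing worth double-checking is that \lemref{LemmaDinfty-cLE} is symmetric enough to be applied to $d_\infty(q', q)$ (where the endpoint in $S$ may be the second argument); this follows from the symmetry $d_\infty(q', q) = d_\infty(q, q')$ of any metric.
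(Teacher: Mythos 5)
Your proposal is correct and follows essentially the same route as the paper: the authors also introduce the intermediate points $p'=(r_0,\theta_p,\varphi_p)$ and $q'=(r_0,\theta_q,\varphi_q)$, apply Lemma~\ref{LemmaDinfty-cLE} three times, and conclude by the triangle inequality. Your remark on the symmetry of $d_\infty$ is a fine point that the paper leaves implicit.
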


\begin{proof}
Given any pair of points, $p=(r_p,\theta_p,\varphi_p)$ and $q=(r_q,\theta_q,\varphi_q)$ in $\Sph^2\times \Sph^1$.
Let $p'=(r_0,\theta_p,\varphi_p)$ and $q'=(r_0,\theta_q,\varphi_q)$. 
By Lemma~\ref{LemmaDinfty-cLE} applied three times
we have,
\begin{eqnarray}
d_\infty(p,p') &\leq &
		|r_p-r_0|+0+0,
        	\\
d_\infty(p',q') &\leq & 
		0+\sin (r_0)\,d_{{\mathbb S}^1}(\theta_p,\theta_q)+f_\infty(r_0)\,
        d_{{\mathbb S}^1}(\varphi_1,\varphi_2),\\
d_\infty(q,q')&\leq &
		|r_q-r_0|+0+0 \,\,\,\le\,\,\, \pi/2.
\end{eqnarray}
By the triangle inequality,
$d_\infty(p,q)$ is bounded above
by the sum of these distances.
\end{proof}

\begin{prop}\label{prop:diam-infty}
The diameter of the pointwise limit metric space
satisfies:
	 \be
\diam_\infty(\Sph^2\times\Sph^1)\leq (3+2\beta)\pi.
	 \ee
\end{prop}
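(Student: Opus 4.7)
The plan is to derive this diameter bound as an immediate consequence of the pointwise convergence combined with the uniform diameter bounds already established for the sequence. First I would fix an arbitrary pair of points $p,q \in \Sph^2 \times \Sph^1$ and note that by Proposition~\ref{prop:diamj}, each Riemannian distance satisfies $d_j(p,q) \le (3+2\beta)\pi$ for every $j \in \mathbb{N}$. Since, by Proposition~\ref{prop:ptlim-exists}, the sequence $d_j(p,q)$ is monotone increasing and converges to $d_\infty(p,q)$ as a supremum, this uniform bound passes to the limit:
\begin{equation}
d_\infty(p,q) \;=\; \sup_{j \in \mathbb{N}} d_j(p,q) \;\le\; (3+2\beta)\pi.
\end{equation}
Taking the supremum over all pairs $p,q \in \Sph^2 \times \Sph^1$ then yields the claimed diameter estimate.

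There is essentially no obstacle in this argument; the genuine work was already carried out in Proposition~\ref{prop:diamj} and Proposition~\ref{prop:ptlim-exists}, and the present proposition is a direct corollary obtained by passing the uniform upper bound through the monotone pointwise limit. As an aside, I would note that a slightly tighter estimate of $(2+\beta)\pi$ is available via Lemma~\ref{LemmaDinfty-cLE2}: choosing $r_0 = \pi/2$ gives $\sin r_0 = 1$ and $f_\infty(\pi/2) = \beta$, and together with $|r_p - \pi/2|, |r_q - \pi/2| \le \pi/2$ and $d_{\Sph^1} \le \pi$ one obtains
\begin{equation}
d_\infty(p,q) \;\le\; \tfrac{\pi}{2} + \tfrac{\pi}{2} + \pi + \beta \pi \;=\; (2+\beta)\pi.
\end{equation}
However, since the stated conclusion only requires the weaker bound matching Proposition~\ref{prop:diamj}, taking the supremum in $j$ is the cleanest and most natural proof, and it also transparently records that $\diam_\infty$ inherits the same explicit constant $D_0 = (3+2\beta)\pi$ already used throughout the sequence.
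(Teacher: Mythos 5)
Your main argument is correct, but it is not the route the paper takes. You pass the uniform bound $d_j(p,q)\le D_0=(3+2\beta)\pi$ from Proposition~\ref{prop:diamj} through the monotone pointwise limit $d_\infty=\sup_j d_j$; this is sound, and in fact it essentially restates the observation already made in the proof of Proposition~\ref{prop:ptlim-exists}, where the limit space is noted to have diameter at most $D_0$. The paper instead proves the bound intrinsically on the limit space: it applies Lemma~\ref{LemmaDinfty-cLE2} with $r_0=\pi/2$, using $|r-\pi/2|\le\pi/2$, $\sin(\pi/2)=1$, $f_\infty(\pi/2)=\beta$, and $\diam(\Sph^1)=\pi$ --- exactly the computation you relegate to your aside, which indeed yields the sharper estimate $(2+\beta)\pi\le(3+2\beta)\pi$. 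What your approach buys is brevity and the transparent inheritance of the same constant $D_0$ used throughout for the sequence; what the paper's approach buys is a self-contained derivation from the newly established distance estimates on $d_\infty$ (not relying on the diameter bound for the $g_j$ imported from \cite{STW-Extreme}), together with a strictly better constant. Either proof is acceptable; your aside shows you see both.
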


\begin{proof}
This follows from Lemma~\ref{LemmaDinfty-cLE2}
taking $r_0=\pi/2$ and using 
\be
|r-\pi/2|\le \pi/2 \qquad \forall r\in [0,\pi],
\ee
and
\be
f_\infty(\pi/2)=-2\ln(\sin(\pi/2))+\beta=\beta,
\ee
and the diameter of
$\mathbb{S}^1$ is $\pi$.
\end{proof}

\begin{lem}\label{lem:dinfty-ge-d_0}
The distance on pointwise limit space
is bounded below by the distances on the isometric product
in Example~\ref{ex:isom-prod}:
\be
d_\infty(p,q)\ge d_0(p,q) \quad \forall 
p,q\in \Sph^2\times \Sph^1.
\ee
\end{lem}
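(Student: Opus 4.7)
The plan is to derive this inequality immediately from Lemma~\ref{lem:d_j-ge-d_0} combined with the pointwise monotone convergence established in Proposition~\ref{prop:ptlim-exists}. The reason this works so cleanly is that $d_\infty$ is defined as the supremum of the $d_j$, and since every $d_j$ already dominates $d_0$, the supremum automatically dominates $d_0$ as well. There is no real obstacle here; the lemma is a one-step corollary of results already in hand.

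More concretely, I would fix an arbitrary pair $p,q \in \Sph^2\times \Sph^1$ and begin by invoking Lemma~\ref{lem:d_j-ge-d_0}, which yields the inequality
\begin{equation}
d_j(p,q) \ge d_0(p,q) \qquad \forall j \in \N.
\end{equation}
By Proposition~\ref{prop:ptlim-exists}, the sequence $d_j(p,q)$ is monotone increasing and converges pointwise to $d_\infty(p,q)$, indeed
\begin{equation}
d_\infty(p,q) = \sup_{j\in \N} d_j(p,q).
\end{equation}
Taking the limit (or equivalently the supremum over $j$) of the previous display preserves the inequality, so $d_\infty(p,q) \ge d_0(p,q)$.

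Since $p$ and $q$ were arbitrary, this establishes the claimed bound on all of $\Sph^2\times \Sph^1$. No delicate analysis of the warping functions or the singular set is required for this statement; the only inputs are the already-proven comparison of the pointwise distances $d_j$ with $d_0$ and the definition of $d_\infty$ as the monotone pointwise limit.
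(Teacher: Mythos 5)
Your proof is correct and follows exactly the paper's argument: the paper also obtains this by taking the pointwise limit of the inequality in Lemma~\ref{lem:d_j-ge-d_0}, using that $d_\infty$ is the monotone pointwise limit of the $d_j$. Your write-up just spells out the one-line argument in more detail.
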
 

\begin{proof}
This follows by taking the pointwise limit of the
estimate in Lemma~\ref{lem:d_j-ge-d_0}.
\end{proof}

When applying Lemma~\ref{LemmaDinfty-cLE2}, the following lemma will be useful to us:

\begin{lem}\label{lem:lnsinx}
For any $m\in \mathbb{N}$, there exists a constant $c_m>0$ such that 
    \be
    f_\infty(x)<x^{-1/m} +\beta
    \quad \forall x\in (0,c_m).
    \ee
\end{lem}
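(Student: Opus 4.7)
The plan is to unpack the definition of $f_\infty$ and reduce the desired inequality to a standard comparison of logarithmic growth with a (negative) power of $x$. Recall that
\begin{equation}
f_\infty(x) = -2\ln(\sin x) + \beta,
\end{equation}
so the claim $f_\infty(x) < x^{-1/m} + \beta$ is equivalent to
\begin{equation}\label{eq:reduction}
-2\ln(\sin x) < x^{-1/m}
\end{equation}
for all $x$ in some small interval $(0, c_m)$. The content of the lemma is therefore that near $0$ the blow-up of $-2\ln(\sin x)$ is dominated by that of $x^{-1/m}$, for arbitrarily large $m$.

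First, I would estimate $\sin x$ from below by a linear function on a small interval, for instance using $\sin x \ge \tfrac{2}{\pi} x$ for $x \in (0, \pi/2]$ (or any similar bound). This gives
\begin{equation}
-2\ln(\sin x) \le -2\ln\!\left(\tfrac{2}{\pi} x\right) = -2\ln x + 2\ln\!\left(\tfrac{\pi}{2}\right),
\end{equation}
so it suffices to verify that $-2\ln x + 2\ln(\pi/2) < x^{-1/m}$ for $x$ sufficiently close to $0$.

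Next, I would compare the two sides via the substitution $y = x^{-1/m}$, which gives $-\ln x = m\ln y$ and reduces the inequality to
\begin{equation}
2m\ln y + 2\ln\!\left(\tfrac{\pi}{2}\right) < y,
\end{equation}
a standard fact about the ratio $(\ln y)/y \to 0$ as $y \to \infty$. Pick any $y_m$ large enough for this to hold, and set $c_m := \min\{y_m^{-m},\, \pi/2\}$; then \eqref{eq:reduction} holds on $(0, c_m)$.

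There is no real obstacle here: the only small subtlety is making sure the linear lower bound on $\sin x$ is valid on the chosen interval, which is handled by shrinking $c_m$ if necessary so that $c_m \le \pi/2$. The rest is the elementary observation $\ln y = o(y)$ at infinity.
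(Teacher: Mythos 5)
Your proof is correct. It reaches the same target inequality $-2\ln(\sin x) < x^{-1/m}$ as the paper, but by a different route: the paper forms the product $a(x) = -2\ln(\sin x)\,x^{1/m}$ and shows $a(x)\to 0$ as $x\to 0^+$ via L'H\^opital's rule, then invokes the definition of the limit to extract $c_m$. You instead replace $\sin x$ by the linear lower bound $\tfrac{2}{\pi}x$ (valid on $(0,\pi/2]$), which converts the problem into the purely logarithmic inequality $-2\ln x + 2\ln(\pi/2) < x^{-1/m}$, and then the substitution $y = x^{-1/m}$ reduces everything to $2m\ln y + 2\ln(\pi/2) < y$ for large $y$, i.e.\ to $\ln y = o(y)$. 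Your version is slightly more elementary (no differentiation needed) and has the small advantage of producing an explicit candidate $c_m = \min\{y_m^{-m}, \pi/2\}$ once a threshold $y_m$ is chosen; the paper's version is shorter to write down because L'H\^opital handles $\sin x$ and the power in one stroke. The only point to be careful about, which you handle correctly, is that $y_m$ must be chosen so the inequality holds for \emph{all} $y \ge y_m$ (not just at $y_m$), which is guaranteed since $y - 2m\ln y$ is eventually increasing and unbounded.
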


\begin{proof}
By the definition of $f_\infty$,
we need only prove that
there exists $c_m>0$ such that
\be
    a(x):=-2\ln(\sin x)x^{\frac{1}{m}}<1
    \quad \forall x\in (0,c_m).
\ee
We take the limit using L'hopital's rule, 
\begin{align*}
\lim_{x\to 0^+}a(x)&=-2\lim_{x\to 0^+}
\dfrac{\ln(\sin x)}{x^{-\frac{1}{m}}}
=-2\lim_{x\to 0^+}
\dfrac{(1/\sin x)\cos x}{\left(-\tfrac{1}{m}\right)
x^{\left(-\frac{1}{m}-1\right)}} 
         \\
&=2m\lim_{x\to 0^+}\left(\dfrac{x}{\sin x}\right)\cdot\lim_{x\to 0^+} \left(x^{\frac{1}{m}}\cos x\right)=0.
\end{align*}
By the definition of limit,
     \be
     \forall \epsilon>0 \,\,\exists \delta_\epsilon>0
     \textrm{ s.t. } |x-0|<\delta_\epsilon
     \implies |a(x)|<\epsilon.
     \ee
     Taking $\epsilon=1$, we choose $c=c_m=\delta_\epsilon$
     to complete the proof.
\end{proof}

\subsection{Homeomorphism}

In this section, we prove 
the homeomorphism part of
Theorem~\ref{thm:ptwise-homeo}
which we state precisely
as follows:

\begin{prop}\label{prop:ptwise-homeo}
        The identity map $F:(\Sph^2\times \Sph^1,d_0)\to(\Sph^2\times \Sph^1,d_\infty)$
        is a homeomorphism
        where $d_0$ and $d_\infty$ are defined
        as in Theorem~\ref{thm:ptwise-homeo}.
\end{prop}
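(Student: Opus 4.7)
The plan is to reduce the proof to continuity of $F$ at each point, then handle the regular and singular points separately. Since $(\Sph^2\times \Sph^1, d_0)$ is compact by Lemma~\ref{lem:iso-cmpct} and $(\Sph^2\times \Sph^1, d_\infty)$ is a metric space---hence Hausdorff---by Proposition~\ref{prop:ptlim-exists}, any continuous bijection between them is automatically a homeomorphism. Because $F$ is the identity map, it is trivially bijective, so the task reduces to verifying that at every $p \in \Sph^2\times \Sph^1$ and every $\epsilon > 0$ there exists $\delta > 0$ with $d_0(p,q) < \delta$ implying $d_\infty(p,q) < \epsilon$.

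For a point $p$ with $r_p \in (0,\pi)$, I would apply Lemma~\ref{LemmaDinfty-cLE} with $p_2 = p$ and $p_1 = q$ to obtain
\[
d_\infty(p,q) \le |r_p - r_q| + \sin(r_p)\, d_{\Sph^1}(\theta_p, \theta_q) + f_\infty(r_p)\, d_{\Sph^1}(\varphi_p, \varphi_q).
\]
By Lemma~\ref{lem:iso-Pyth} both $|r_p - r_q|$ and $d_{\Sph^1}(\varphi_p, \varphi_q)$ are bounded by $d_0(p,q)$, and by Lemma~\ref{LemmaThetaInequality} we have $d_{\Sph^1}(\theta_p, \theta_q) < d_0(p,q)/\sin(r_p/2)$ once $d_0(p,q)$ is sufficiently small. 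Since $f_\infty(r_p)$ and $\sin(r_p/2)$ are finite positive constants away from the singular set, this produces the required $\delta$ by a linear comparison.

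The real difficulty is continuity at a pole, say $r_p = 0$, where $f_\infty(r_p) = +\infty$ and the coordinate $\theta_p$ is indeterminate. Here I would instead invoke Lemma~\ref{LemmaDinfty-cLE2} with a carefully chosen intermediate radius $r_0 \in (0,\pi)$, bounding
\[
d_\infty(p,q) \le |r_p - r_0| + |r_q - r_0| + \pi \sin(r_0) + f_\infty(r_0)\, d_{\Sph^1}(\varphi_p, \varphi_q)
\]
via the trivial bound $d_{\Sph^1}(\theta_p, \theta_q) \le \pi$. The ordering of choices is essential: given $\epsilon$, I would first pick $r_0$ small enough that $2r_0 + \pi\sin(r_0) < \epsilon/2$, thereby freezing $f_\infty(r_0)$ as a finite constant; then I would pick $\delta < r_0$ small enough that $f_\infty(r_0)\,\delta < \epsilon/2$. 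Because $r_q \le d_0(p,q) < \delta < r_0$, Lemma~\ref{lem:iso-Pyth} yields $|r_q - r_0| \le r_0$ and $d_{\Sph^1}(\varphi_p, \varphi_q) < \delta$, so the total estimate falls below $\epsilon$. The case $r_p = \pi$ is symmetric.

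The main obstacle is precisely this singular case, where the blow-up of $f_\infty$ near the poles has to be balanced against the vanishing $\sin r_0$ factor. The crucial feature of Lemma~\ref{LemmaDinfty-cLE2} is that it decouples the radius $r_0$ used in the estimate from $r_p$, at the cost of the extra terms $|r_p - r_0| + |r_q - r_0|$; once this decoupling is available, the rest is routine $\epsilon$-$\delta$ bookkeeping. Note that Lemma~\ref{lem:lnsinx} is not needed here: it suffices to treat $r_0$ as a single parameter fixed ahead of $\delta$.
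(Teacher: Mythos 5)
Your proof is correct, and it differs from the paper's in two genuine ways. First, for the inverse map: the paper proves directly (its Lemma~\ref{lem:inv-cont}) that $F^{-1}$ is $1$-Lipschitz via the inequality $d_0\le d_\infty$ of Lemma~\ref{lem:dinfty-ge-d_0}, whereas you invoke the soft fact that a continuous bijection from a compact space to a Hausdorff space is a homeomorphism; your use of compactness of the \emph{domain} $(\Sph^2\times\Sph^1,d_0)$ avoids any circularity (the paper only deduces compactness of the $d_\infty$-space afterward), so this is legitimate, though the paper's route yields the slightly stronger quantitative statement that $F^{-1}$ is $1$-Lipschitz. Second, and more substantively, at singular points the paper (Lemma~\ref{lem:cont-in-S}) takes the intermediate radius equal to $\delta$ itself and must therefore control the product $f_\infty(\delta)\,\delta$ as $\delta\to 0$, which is exactly what Lemma~\ref{lem:lnsinx} supplies, ending with the explicit modulus $(7+\beta)\,\delta^{2/3}$. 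You instead decouple the two scales: fix $r_0$ first so that $2r_0+\pi\sin(r_0)<\epsilon/2$, which freezes $f_\infty(r_0)$ as a finite constant (positive, since $f_\infty\ge\beta\ge 2$), and only then choose $\delta<\min\{r_0,\epsilon/(2f_\infty(r_0))\}$. This is a cleaner argument that indeed makes Lemma~\ref{lem:lnsinx} unnecessary for the homeomorphism (the paper still needs that lemma later for the Hausdorff-dimension computation); the only thing you give up is an explicit modulus of continuity at the poles, which the proposition does not require. Your treatment of regular points is the same as the paper's.
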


Before we prove Proposition~\ref{prop:ptwise-homeo},
we state and prove three lemmas: Lemma~\ref{lem:cont-off-S},
Lemma~\ref{lem:cont-in-S},
and Lemma~\ref{lem:inv-cont}.

\begin{lem}\label{lem:cont-off-S}
    The function $F$ of Proposition~\ref{prop:ptwise-homeo} is continuous at any point $p\in(\Sph^2\times\Sph^1)\setminus S$.
\end{lem}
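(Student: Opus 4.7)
The plan is to give a direct $\epsilon$-$\delta$ argument. Fix $p = (r_p, \theta_p, \varphi_p) \in (\Sph^2 \times \Sph^1) \setminus S$, so $r_p \in (0,\pi)$. Given $\epsilon > 0$, we must produce $\delta > 0$ so that whenever $d_0(p,q) < \delta$ we have $d_\infty(p,q) < \epsilon$. The main tool will be the one-sided estimate of Lemma~\ref{LemmaDinfty-cLE}, so we first need to ensure that the candidate point $q$ stays away from the singular set $S$, and then bound each of the three terms on the right-hand side.

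First I would use the continuity of $f_\infty$ on the open interval $(0,\pi)$ to pick an interval around $r_p$ on which $f_\infty$ stays bounded. Specifically, set
\be
a = \tfrac{1}{2}\min\{r_p, \pi - r_p\} \quad\text{and}\quad M = \sup_{r \in [r_p - a,\, r_p + a]} f_\infty(r),
\ee
both of which are finite since $[r_p - a, r_p + a] \subset (0,\pi)$ is compact and $f_\infty$ is continuous there. Next, choose
\be
\delta = \min\Big\{ a,\ \tfrac{\epsilon}{3}, \ \tfrac{\epsilon \sin(r_p/2)}{3}, \ \tfrac{\epsilon}{3M} \Big\}.
\ee

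Now suppose $d_0(p,q) < \delta$, with $q = (r_q,\theta_q,\varphi_q)$. By Lemma~\ref{lem:iso-Pyth} we immediately have
\be
|r_p - r_q| \le d_0(p,q) < \delta \le a \quad\text{and}\quad d_{\Sph^1}(\varphi_p,\varphi_q) \le d_0(p,q) < \delta.
\ee
The first inequality places $r_q$ in $[r_p - a, r_p + a] \subset (0,\pi)$, so in particular $q \notin S$ and $f_\infty(r_q) \le M$. Since $\delta \le a \le \min\{r_p/2, (\pi - r_p)/2\}$, Lemma~\ref{LemmaThetaInequality} applies and yields
\be
d_{\Sph^1}(\theta_p, \theta_q) < \frac{\delta}{\sin(r_p/2)}.
\ee
Because $r_q \in (0,\pi)$, the hypothesis of Lemma~\ref{LemmaDinfty-cLE} is satisfied with $p_1 = p$ and $p_2 = q$, giving
\be
d_\infty(p,q) \le |r_p - r_q| + \sin(r_q) \, d_{\Sph^1}(\theta_p,\theta_q) + f_\infty(r_q)\, d_{\Sph^1}(\varphi_p,\varphi_q).
\ee
Using $\sin(r_q) \le 1$ and $f_\infty(r_q) \le M$, and substituting the three estimates above, this is bounded by $\delta + \delta/\sin(r_p/2) + M\delta < \epsilon/3 + \epsilon/3 + \epsilon/3 = \epsilon$, as required.

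The argument has no real obstacle: the only subtle point is ensuring $q \notin S$ so that Lemma~\ref{LemmaDinfty-cLE} is available, which is exactly why the hypothesis $p \notin S$ is essential. The genuinely delicate case, where $p$ lies in the singular set and $f_\infty(r_p) = \infty$, is precisely what Lemma~\ref{lem:cont-in-S} will have to handle separately.
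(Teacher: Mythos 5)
Your proof is correct and follows essentially the same route as the paper: the one-sided estimate of Lemma~\ref{LemmaDinfty-cLE}, combined with Lemma~\ref{LemmaThetaInequality} and the product-metric inequalities of Lemma~\ref{lem:iso-Pyth}, with $\delta$ chosen to kill each of the three terms. The only cosmetic difference is that the paper applies Lemma~\ref{LemmaDinfty-cLE} with the smooth point in the role of $p_2=p$, so only $f_\infty(r_p)$ appears and no supremum $M$ over a compact interval is needed, whereas your choice $p_2=q$ forces the extra (but harmless) steps of verifying $q\notin S$ and bounding $f_\infty(r_q)\le M$.
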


\begin{proof}
Fix $p=(r_p,\theta_p,\varphi_p)\notin S$.
So $r_p\in (0,\pi)$ and  
\be
\bar{d}=\bar{d}_p=\min\{r_p, \pi-r_p\}>0
\ee
By Lemma~\ref{lem:iso-Pyth}, the closed
ball in the isometric
product avoids the singular set,
\be
\bar{B}_{d_0}(p,\bar{d}_p/2)\cap S=\emptyset.
\ee
For any $\epsilon>0$, 
we choose  
\be
\delta=\delta_{\epsilon,p}
=\min\{\bar{d}_p/2,\epsilon/(1+\sigma_p+f_{\infty}(r_p)) \}>0
\ee
where
\be
\sigma_p=\sin(r_p)/\sin(r_p/2).
\ee
Consider any 
\be
q=(r_q,\theta_q,\varphi_q)\in B_{d_0}(p,\delta).
\ee
By the Lemma \ref{LemmaDinfty-cLE},
and then by the properties of $d_0$ in (\ref{eq-isom-r}) and (\ref{eq-isom-phi}) and Lemma~\ref{LemmaThetaInequality},
%and by (\ref{eq-here-finf-mono})
we have
\begin{equation}
 \begin{split}
d_\infty(p,q)&\le |r_p-r_q|+\sin(r_p)d_{\Sph^1}(\theta_p,\theta_q)+f_\infty(r_p)d_{\Sph^1}(\varphi_p,\varphi_q)\\
&\le d_0(p,q)+\sin(r_p)\dfrac{\delta}{\sin(r_p/2)}+f_\infty(r_p) d_0(p,q)\\
&\le \delta+\sigma_p\delta+f_\infty(r_p) \delta\le \epsilon. 
\end{split}
\end{equation}
\end{proof}

\begin{lem}\label{lem:cont-in-S}
The function $F$ of Proposition~\ref{prop:ptwise-homeo} is continuous at $p\in S$.
\end{lem}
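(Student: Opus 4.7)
The plan is to handle $p \in S$ by invoking Lemma~\ref{LemmaDinfty-cLE2}, which estimates $d_\infty(p,q)$ by detouring through a point at an intermediate radius $r_0 \in (0,\pi)$ where $f_\infty(r_0)$ is finite. This sidesteps the blow-up of $f_\infty$ on $S$, at the cost of introducing a free parameter $r_0$ that must be tuned carefully in terms of $\delta$. By the symmetry between $r=0$ and $r=\pi$, it suffices to treat the case $p = (0,\theta_p,\varphi_p) \in r^{-1}(0)$, choosing the $\theta_p$ coordinate arbitrarily since it is degenerate at the pole.

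Fix $\epsilon > 0$ and take $q = (r_q,\theta_q,\varphi_q)$ with $d_0(p,q) < \delta$. From (\ref{eq-isom-r}) and (\ref{eq-isom-phi}) I obtain $r_q < \delta$ and $d_{\Sph^1}(\varphi_p,\varphi_q) < \delta$, while the $\theta$-distance is trivially bounded by $\pi$. Lemma~\ref{LemmaDinfty-cLE2} with parameter $r_0 \in (0,\pi)$ then yields
\[
d_\infty(p,q) \;\le\; r_0 \;+\; |r_q - r_0| \;+\; \pi\sin(r_0) \;+\; f_\infty(r_0)\,\delta.
\]
The first three terms shrink as $r_0 \to 0$ (noting $|r_q - r_0| \le r_0 + r_q < r_0 + \delta$), but the last is delicate because $f_\infty(r_0) \to \infty$ as $r_0 \to 0^+$.

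The main obstacle is precisely this competition between wanting $r_0$ small and wanting $f_\infty(r_0)\,\delta$ small; Lemma~\ref{lem:lnsinx} is exactly what is needed to resolve it. Applying it with $m = 2$ gives $f_\infty(r_0) < r_0^{-1/2} + \beta$ for all $r_0 \in (0,c_2)$. I will choose $r_0 = \delta^{2/3}$, so that for $\delta$ small enough (in particular $\delta < 1$ and $\delta^{2/3} < c_2$) we have $\delta < r_0$ and
\[
f_\infty(r_0)\,\delta \;\le\; (\delta^{-1/3} + \beta)\,\delta \;=\; \delta^{2/3} + \beta\,\delta,
\]
while $r_0$, $|r_q - r_0| < 2r_0$, and $\pi\sin(r_0) \le \pi r_0$ are each $O(\delta^{2/3})$. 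All four terms therefore tend to zero with $\delta$, so $\delta$ sufficiently small in terms of $\epsilon$ forces $d_\infty(p,q) < \epsilon$, which proves continuity of $F$ at $p \in S$.
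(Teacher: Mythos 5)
Your proof is correct and follows essentially the same route as the paper: detour through an intermediate radius $r_0$ to avoid the blow-up of $f_\infty$ on $S$, and use Lemma~\ref{lem:lnsinx} to balance the smallness of $r_0$ against the growth of $f_\infty(r_0)$; your invocation of Lemma~\ref{LemmaDinfty-cLE2} is just the packaged form of the paper's triangle-inequality step with Lemma~\ref{LemmaDinfty-cLE}. The only (immaterial) difference is the parameter choice --- you take $r_0=\delta^{2/3}$ with $m=2$ where the paper takes $r_0=\delta$ with $m=3$ --- and both land on the same $O(\delta^{2/3})$ bound.
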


\begin{proof}
Consider any $p=(r_p,\theta_p,\phi_p)\in S$, so
$r_p=0$ or $r_p=\pi$.  Recall
\be 
f_\infty(r, \theta)=-2\ln( \sin r)+\beta,
\ee
so, without loss of generality, 
we can assume $r_p=0$.  

For any $\epsilon>0$, choose 
\be
\delta=\delta_{\epsilon}=\min\left\{\tfrac{1}{2},c,\left(\tfrac{\epsilon}{7+\beta}\right)^{3/2}\right\}>0,
\ee
where $c=c_3$ is from Lemma~\ref{lem:lnsinx} with $m=3$ which guarantees that
\be \label{here:lnsinx}
f_\infty(\delta) \le \delta^{-\frac{1}{3}}+\beta
%-2 \ln(\sin \delta)\le \dfrac{1}{\delta^{\frac{1}{3}}}.
\ee

Consider any
\be
q=(r_q,\theta_q,\varphi_q)\in B_{d_0}(p,\delta).
\ee
Since $d_0$ is an isometric product metric
and $r_p=0$,
\be
r_q=|r_q-r_p|\le d_0(p,q)<\delta
\ee
and
\be \label{eq:delta-minus-rq}
|\delta-r_q|\le \delta
\ee

By the triangle inequality and
then Lemma \ref{LemmaDinfty-cLE} we have,
\begin{equation}
\begin{split}
d_{\infty}(p,q)&=d_{\infty}((0,\theta_p,\phi_p),(r_q,\theta_q,\varphi_q))\\
&\le d_{\infty}((0,\theta_p,\phi_p),(\delta,\theta_p,\phi_p))+d_{\infty}((\delta,\theta_p,\phi_p),(r_q,\theta_q,\varphi_q))\\
&\le \delta +|r_q-\delta|+\sin(\delta)d_{\Sph^1}(\theta_p,\theta_q)+f_{\infty}(\delta)d_{\Sph^1}(\varphi_p,\varphi_q)
\end{split}
\end{equation}
Since $\sin^2(\delta)\le \delta^2$ and $d_{\Sph^1}(\theta_1,\theta_2)<\pi$, we have the following estimate 
\be |\sin(\delta)d_{\Sph^1}(\theta_1,\theta_2)|\le \pi\d 
\ee
By (\ref{eq-isom-phi}), we have 
\be
d_{\Sph^1}(\varphi_1,\varphi_2)\le d_0(p,q)\le \delta.
\ee
Combining this with (\ref{here:lnsinx})
we have
\be
f_{\infty}(\delta)\,d_{\Sph^1}(\varphi_p,\varphi_q)\le
(\delta^{-1/3} +\beta)\,\delta
=\delta^{2/3}+\beta\delta
\ee
Combining these with 
(\ref{eq:delta-minus-rq})
and then using $\delta<1$, we have
\begin{equation}
d_\infty(p,q)\le  2\delta+\delta\pi+\delta^{2/3}+ \beta\delta 
\,\,\le (7+\beta)\delta^{2/3}\le \epsilon.
\end{equation}
Thus $F$ is continuous at singular points, $p\in S$.
\end{proof}

\begin{lem}\label{lem:inv-cont}
The function $F$ of Proposition~\ref{prop:ptwise-homeo}
is bijective and its inverse is Lipschitz one and thus continuous
everywhere.
\end{lem}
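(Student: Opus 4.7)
The plan is to exploit the fact that $F$ is defined as the identity map on the underlying set $\Sph^2\times\Sph^1$, so only the metric structure changes between source and target. Bijectivity is then immediate and requires no work: $F$ has an obvious set-theoretic inverse, namely the identity map in the reverse direction, $F^{-1}:(\Sph^2\times\Sph^1,d_\infty)\to(\Sph^2\times\Sph^1,d_0)$.

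For the Lipschitz estimate on $F^{-1}$, I would reduce the claim to a direct comparison of the two metrics. Concretely, the inverse $F^{-1}$ being Lipschitz with constant $1$ means
\[
d_0(F^{-1}(p),F^{-1}(q))\le d_\infty(p,q) \qquad \forall p,q\in \Sph^2\times\Sph^1,
\]
and since $F^{-1}$ is the identity on points, this is exactly the inequality
\[
d_0(p,q)\le d_\infty(p,q),
\]
which is precisely the content of Lemma~\ref{lem:dinfty-ge-d_0}. Hence the Lipschitz estimate is obtained by a one-line quotation of an already-established lemma, with no further estimation needed.

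Finally, Lipschitz maps between metric spaces are continuous everywhere, so continuity of $F^{-1}$ follows immediately from the Lipschitz bound. There is essentially no obstacle in this lemma: the genuine analytic work was absorbed into the earlier proofs (the monotonicity $f_j\ge\beta\ge 1$ giving $d_j\ge d_0$, passed to the limit in Lemma~\ref{lem:dinfty-ge-d_0}, and the continuity of $F$ itself proved in Lemma~\ref{lem:cont-off-S} and Lemma~\ref{lem:cont-in-S}). The role of this final lemma is merely to assemble those pieces so that, together with the earlier two continuity lemmas, Proposition~\ref{prop:ptwise-homeo} follows: $F$ is a continuous bijection with continuous inverse, i.e.\ a homeomorphism.
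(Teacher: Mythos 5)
Your proposal is correct and matches the paper's own proof essentially verbatim: bijectivity from $F(p)=p$, the Lipschitz-one bound for $F^{-1}$ as a direct restatement of Lemma~\ref{lem:dinfty-ge-d_0}, and continuity as an immediate consequence. No gaps and no meaningful difference in approach.
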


\begin{proof}
The function $F$ is bijective because $F(p)=p$. By Lemma~\ref{lem:dinfty-ge-d_0},
\be
d_0(F^{-1}(p),F^{-1}(q))
=d_0(p,q)\le
d_\infty(p,q).
\ee
So the inverse is Lipschitz $1$ and thus continuous. 
\end{proof}

We now prove that
$F$ is a homeomorphism:

\begin{proof}[Proof of
Proposition~\ref{prop:ptwise-homeo}]
This follows immediately from the
three lemmas:
Lemma~\ref{lem:cont-off-S},
Lemma~\ref{lem:cont-in-S},
and Lemma~\ref{lem:inv-cont}.
\end{proof}

Finally we combine the pointwise convergence
of the monotone increasing sequence
with the homeomorphism:

\begin{proof}[Proof of 
Theorem~\ref{thm:ptwise-homeo}]
This follows immediately from
Proposition~\ref{prop:ptlim-exists} and  
Proposition~\ref{prop:ptwise-homeo}.
\end{proof}

\section{Gromov-Hausdorff Convergence to the Pointed Limit Space}

In this section we review Gromov-Hausdorff (GH) convergence and then prove the following GH convergence theorem which will later be applied to prove Theorem~\ref{Thm:Main}.

\begin{thm}\label{thm:GH}
The sequence of warped product Riemannian manifolds in Example~\ref{ex:sequence} 
viewed as metric spaces,
$(\Sph^2\times \Sph^1,d_j)$, converges in the uniform and Gromov-Hausdorff sense to the pointed limit space
$(\Sph^2\times \Sph^1,d_\infty)$ which is a compact
metric space homeomorphic to the
isometric product space,
$(\Sph^2\times \Sph^1,d_0)$.
\end{thm}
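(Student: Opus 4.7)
The plan is to apply the monotone convergence theorem of Perales-Sormani from \cite{PS-Monotone}, combined with the standard fact that uniform convergence of distance functions on a fixed set implies Gromov-Hausdorff convergence. Most of the ingredients have already been assembled in the previous two sections.

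First, I would verify the hypotheses needed to invoke the Perales-Sormani result. The sequence $(d_j)$ is monotone increasing by Lemma~\ref{LemDistanceFunctionsInc} and converges pointwise to a bona fide distance function $d_\infty$ by Theorem~\ref{thm:ptwise-homeo}. The limit space $(\Sph^2\times\Sph^1, d_\infty)$ is compact, because the identity map is a homeomorphism from $(\Sph^2\times\Sph^1, d_0)$ to $(\Sph^2\times\Sph^1, d_\infty)$ by Proposition~\ref{prop:ptwise-homeo}, while $(\Sph^2\times\Sph^1, d_0)$ is compact by Lemma~\ref{lem:iso-cmpct}. Applying the Perales-Sormani theorem then yields uniform convergence $d_j \to d_\infty$ as functions on $(\Sph^2\times\Sph^1)\times(\Sph^2\times\Sph^1)$.

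Second, I would derive Gromov-Hausdorff convergence from this uniform convergence. The identity map $\textrm{id}\colon (\Sph^2\times\Sph^1, d_j) \to (\Sph^2\times\Sph^1, d_\infty)$ is a surjection whose distortion equals $\epsilon_j := \sup_{p,q}|d_j(p,q)-d_\infty(p,q)|$, which tends to zero. Hence it furnishes a sequence of $\epsilon_j$-approximations between the two spaces, giving
\begin{equation}
d_{GH}\bigl((\Sph^2\times\Sph^1, d_j),\, (\Sph^2\times\Sph^1, d_\infty)\bigr) \le \epsilon_j \to 0.
\end{equation}

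The main obstacle is whether the cited Perales-Sormani theorem applies verbatim or requires a preliminary continuity check on $d_\infty$. If a direct application is not available, I would substitute a Dini-type argument: each $d_j$ is continuous on $(\Sph^2\times\Sph^1)^2$ equipped with the compact topology induced by $d_0$ (using Lemma~\ref{lem:bound-dj-LE} combined with the triangle inequality as in Lemma~\ref{LemmaDinfty-cLE2} to handle points in or near the singular fibers $r^{-1}(0)\cup r^{-1}(\pi)$), the limit $d_\infty$ is continuous in the same topology by Proposition~\ref{prop:ptwise-homeo}, and monotone pointwise convergence of continuous functions to a continuous function on a compact space upgrades to uniform convergence by Dini's theorem. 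The remainder of the argument then proceeds exactly as above.
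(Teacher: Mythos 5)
Your proposal is correct and follows essentially the same route as the paper: verify the hypotheses of the Perales--Sormani monotone convergence theorem (Theorem~\ref{thm:Riem-short}), establish compactness of $(\Sph^2\times\Sph^1,d_\infty)$ via the homeomorphism with the compact isometric product, and conclude uniform and GH convergence. Your explicit $\epsilon_j$-approximation argument and the Dini-type fallback are redundant with the cited theorem (which already delivers both uniform and GH convergence once compactness of the limit is known) but are harmless and correct.
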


\subsection{Review of Gromov-Hausdorff Convergence}
Recall the following definition of the
Gromov-Hausdorff distance between compact metric spaces, first introduced by 
Edwards in \cite{edwards1975structure} and then rediscovered and
studied extensively by Gromov in \cite{Gromov-text}.

\begin{defn}\label{defn:GH}
Given a pair of metric spaces, $(X_a,d_a)$ and $(X_b,d_b)$, the Gromov-Hausdorff distance
between them is
\be
d_{GH}\left((X_a,d_a),(X_b,d_b)\right)=
\inf\{ d_H^Z\left(f_a(X_a), f_b(X_b)\right)\}
\ee
where the infimum is taken over all common metric spaces, $(Z,d_Z)$, and over all distance preserving maps,
\be
f_a: (X_a,d_a)\to (Z,d_Z)
\textrm{ and } f_b: (X_b,d_b)\to (Z,d_Z).
\ee
Here $d_H^Z$ denotes
the Hausdorff distance,
\be
d_H^Z\left(A,B\right)
=\inf \{R \,:\, A\subset T_R(B)
\textrm{ and } B\subset T_R(A)\}
\ee
between subsets $A,B \subset Z$ which is defined
using tubular neighborhoods of a given radius $R$,
\be
T_R(A)=\{z\in Z\,:\, \exists p\in A
\,s.t.\, d_Z(p,z)<R\}.
\ee
\end{defn} 

The Gromov-Hausdorff distance is definite in the sense that
\be
d_{GH}\left((X_a,d_a),(X_b,d_b)\right)=0
\ee
iff there is a bijection, $\Psi:X_a\to X_b$,
which is distance preserving:
\be
d_b(\Psi(p),\Psi(q))=d_a(p,q)
\quad \forall p,q \in X_a.
\ee
A sequence of compact metric spaces, $(X_j,d_j)$ converges in the Gromov-Hausdorff sense to the
compact metric space $(X_\infty,d_\infty)$
iff
\be
d_{GH}\left((X_j,d_j),(X_\infty,d_\infty)\right)\to 0.
\ee

For those who would like to learn more about 
 the notion of GH convergence we recommend the book of Burago-Burago-Ivanov \cite{BBI-text} and Gromov's original book
 \cite{Gromov-metric}. The only result we need for this paper is the following theorem proven by Perales-Sormani in \cite{PS-Monotone}.

\begin{thm}[Perales-Sormani Monotone to GH]\label{thm:Riem-short}
Given a compact Riemannian manifold, $(M^m,g_0)$,
possibly with boundary, with a monotone increasing sequence of Riemannian 
metric tensors $g_j$ such that
\be\label{eq:mono-g}
g_j(V,V)\ge g_{j-1}(V,V) \qquad \forall V\in TM
\ee
with uniform bounded diameter,
\be\label{eq:diam}
\diam_{g_j}(M)\le D_0.
\ee
Then the induced length distance functions $d_j: M\times M\to [0,D_0]$
are monotone increasing and converge pointwise to
a distance function,
$d_\infty: M\times M\to [0,D_0]$
so that $(M,d_\infty)$ is
a metric space. 

If the metric space $(M, d_\infty)$ is a compact metric space,
then $d_j\to d_\infty$ uniformly and
\be\label{eq:GH}
(M,d_j) \GHto (M,d_\infty).
\ee
\end{thm}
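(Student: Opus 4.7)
The plan is to split the proof into three stages, matching the three claims embedded in the statement: pointwise monotone convergence to a bona fide metric $d_\infty$, uniform convergence under the compactness assumption, and the deduction of Gromov-Hausdorff convergence from uniform convergence.

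For the first stage, the observation is that $g_j \ge g_{j-1}$ on $TM$ gives $L_{g_j}(c) \ge L_{g_{j-1}}(c)$ for any admissible curve $c$ (by monotonicity under the square root and integral), and taking infima yields $d_j \ge d_{j-1}$ pointwise. Coupled with the uniform bound $d_j \le D_0$, each real sequence $\{d_j(p,q)\}$ is monotone and bounded, so it converges to a limit $d_\infty(p,q) \in [0,D_0]$ by the monotone convergence theorem for real sequences. Symmetry and the triangle inequality pass to pointwise limits routinely; for positive definiteness I would use $d_\infty \ge d_0$, which follows from $g_j \ge g_0$ for all $j$. Since $d_0$ is already a metric (arising from the smooth Riemannian structure on the compact manifold $(M,g_0)$), the implication $d_\infty(p,q) = 0 \Rightarrow d_0(p,q) = 0 \Rightarrow p = q$ closes this stage.

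The heart of the argument is the uniform convergence step under compactness of $(M,d_\infty)$. I would set $f_j := d_\infty - d_j$ on $M \times M$; these functions are nonnegative and decrease monotonically to $0$ pointwise by the previous stage. The key observation is that each $d_j$ is $1$-Lipschitz with respect to the product metric $d_\infty \oplus d_\infty$: the ordinary triangle inequality for $d_j$ gives
\[
|d_j(p,q) - d_j(p',q')| \le d_j(p,p') + d_j(q,q') \le d_\infty(p,p') + d_\infty(q,q'),
\]
using $d_j \le d_\infty$ in the last step, and the analogous bound holds with $d_j$ replaced throughout by $d_\infty$. Hence $f_j$ is continuous on the compact space $(M \times M, d_\infty \oplus d_\infty)$, and Dini's theorem yields uniform convergence $d_j \to d_\infty$ on $M \times M$.

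Finally, uniform convergence of distance functions on a common underlying set implies Gromov-Hausdorff convergence. I would use the identity map $\mathrm{id}\colon M \to M$ as a correspondence between $(M,d_j)$ and $(M,d_\infty)$; its distortion equals $\sup_{p,q}|d_j(p,q) - d_\infty(p,q)|$, which tends to zero by the previous stage. The standard bound $d_{GH} \le \tfrac{1}{2}\mathrm{dis}(R)$ on the Gromov-Hausdorff distance via a correspondence $R$ then yields $(M,d_j) \GHto (M,d_\infty)$. The subtlest step is the Dini application: although each $d_j$ is a priori continuous only in its own (possibly coarser) topology, the inequality $d_j \le d_\infty$ upgrades it to continuity in the finer $d_\infty$ topology, which is exactly what is needed for Dini to apply on the compact product $(M \times M, d_\infty \oplus d_\infty)$.
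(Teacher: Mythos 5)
Your proof is correct. One remark on context: the paper does not actually prove Theorem~\ref{thm:Riem-short} itself --- it imports the result as a black box from Perales--Sormani \cite{PS-Monotone} (it is the first half of their monotone-convergence theorem, restated here as Theorem~\ref{thm:Riem}). So there is no in-paper argument to compare against; judged on its own, your three-stage argument is complete and is essentially the standard proof of this kind of statement. The pointwise stage (monotonicity of lengths under $g_j\ge g_{j-1}$, the $D_0$ bound, and positive-definiteness via $d_\infty\ge d_0$) is sound; the decisive step is exactly the one you flag, namely that $d_j\le d_\infty$ makes each $d_j$ $1$-Lipschitz for $d_\infty\oplus d_\infty$, so $d_\infty-d_j$ is a monotone sequence of continuous functions on the compact space $(M\times M, d_\infty\oplus d_\infty)$ decreasing to the continuous function $0$, and Dini gives uniformity; and the passage from uniform convergence to GH convergence via the identity correspondence and $d_{GH}\le\tfrac12\operatorname{dis}(R)$ is standard. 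This also makes transparent why the compactness hypothesis on $(M,d_\infty)$ cannot be dropped (Dini fails without it), consistent with the counterexamples the paper points to in Example 2.6 and Remark 3.8 of \cite{PS-Monotone}.
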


This theorem of Perales-Sormani was inspired by early work of Sormani-Tian towards proving GH convergence of our specific monotone sequence.   It seemed that many aspects of our planned proof did not use anything more than monotonicity.  When Tian chose to leave academia, Sormani decided to prove this more general theorem with Perales and then apply it within this paper.

It is worth noting that without 
compactness of the limit space,
there need not be GH convergence to
the pointwise limit space.  See, for
example, Example 2.6 and Remark 3.8 of \cite{PS-Monotone} by Perales-Sormani.  

\subsection{Proof of GH Convergence}

With our previous results about the
pointwise limit, we are now easily able
to prove GH convergence:
 
\begin{proof}[Proof of Theorem~\ref{thm:GH}]
Since the sequence in Example~\ref{ex:sequence} is monotone increasing we can apply the Perales-Sormani Monotone to GH Convergence Theorem
(stated above as Theorem~\ref{thm:Riem-short}).  We need only show the pointwise
limit space, $(\Sph^2\times\Sph^1,d_\infty)$
is compact to obtain uniform and GH convergence of
$(\Sph^2\times\Sph^1,d_j)$
to $(\Sph^2\times\Sph^1,d_\infty)$.

By our Theorem~\ref{thm:ptwise-homeo}, we know the pointwise limit, 
$(\Sph^2\times\Sph^1,d_\infty)$, is homeomorphic to the isometric product metric space $(\Sph^2\times\Sph^1,d_0)$.
Since $\Sph^2$ and $\Sph^1$ are compact metric spaces, this isometric product space is compact. Since homeomorphisms preserve compactness, our pointwise limit space, $(\Sph^2\times\Sph^1,d_\infty)$, is also compact.   
\end{proof}

\section{The Metric Completion of the 
Smooth Limit Away from the Singular Set 
is the GH Limit}

In this section, we
consider the
extreme limit space of Example~\ref{ex:limit}
with the singular set removed,
\be
((\Sph^2\times\Sph^1)\setminus S,g_\infty)
\ee
as a smooth open Riemannian manifold with 
metric tensor,
$g_\infty$, that is the smooth limit
of $g_j$ of Example~\ref{ex:sequence}
away from the singular set, $S$.   It
has a Riemannian distance function, $d_{g_\infty}$,  
as in Definition~\ref{defn:dj}
defined using curves that avoid the
singular set. This defines 
a metric space,
\be\label{eq:open-space}
(\Sph^2\times\Sph^1\setminus S,d_{g_\infty}).
\ee

In contrast, we have the compact
metric space,
$(\Sph^2\times\Sph^1,d_{\infty})$,
which is defined in Theorem~\ref{thm:ptwise-homeo}
and proven in Theorem~\ref{thm:GH}
to be the $GH$ limit of
$(\Sph^2\times\Sph^1\setminus S,d_j)$
where $d_j=d_{g_j}$ as in Definition~\ref{defn:dj}).

Here
we review the notion of
a metric completion and prove the metric completion part of Theorem~\ref{Thm:Main}
which we state 
precisely here as
Theorem~\ref{thm:completion}.

\begin{thm}\label{thm:completion}
The metric completion of
the extreme limit space,
$(\Sph^2\times\Sph^1\setminus S,d_{g_\infty})$,
of Example~\ref{ex:limit} 
is isometric to the $GH$ limit,
$(\Sph^2\times\Sph^1,d_\infty)$,
of the sequence
$(\Sph^2\times\Sph^1,d_j)$ of
Example~\ref{ex:sequence}.
In addition the singular set $S$
inside this limit space has Hausdorff dimension $1$ and 
$\mathcal{H}^1_{d_\infty}(S)=\infty$
and
\be
{\mathcal{H}}^3_{d_\infty}(\Sph^2\times \Sph^1, d_\infty)=(2\pi)^2(2\beta+4-2\ln 4).
\ee
\end{thm}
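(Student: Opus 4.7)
The plan is to prove Theorem~\ref{thm:completion} in three parts corresponding to its three assertions; the technical core lies in Part~1.

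\emph{Part 1 (metric completion identification).} I would first show $d_\infty = d_{g_\infty}$ on $(M\setminus S)\times(M\setminus S)$. The direction $d_\infty \le d_{g_\infty}$ is immediate from monotonicity $g_j \le g_\infty$ (Lemma~\ref{lem:warp-inc}): for any curve $c$ avoiding $S$, $L_j(c) \le L_{g_\infty}(c)$, so $d_j(p,q) \le L_{g_\infty}(c)$, and one passes to $j\to\infty$ followed by the infimum over $c$. For the reverse I would prove a local version: for $p\in M\setminus S$ and $q$ in a sufficiently small neighborhood of $p$, any $g_j$-minimizing curve from $p$ to $q$ stays in a compact $K\subset M\setminus S$ for $j$ large. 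The main obstacle is precisely this confinement; the mechanism is a cost comparison, since any dip toward $S$ incurs extra radial length without reducing the $\varphi$-cost $\int f_j(r)|\varphi'|\,dt$ (whose integrand is minimized at $r=\pi/2$ and blows up near $S$), so an explicit reference curve routed through $r=\pi/2$ built via Lemma~\ref{LemmaDinfty-cLE} outperforms any dipping trajectory for $j$ large. Once $g_j$-minimizers lie in $K$, smooth convergence $g_j\to g_\infty$ on the compact $K$ gives $d_j(p,q)=d_{j,K}(p,q)\to d_{g_\infty,K}(p,q)=d_{g_\infty}(p,q)$ (the last equality since the $g_\infty$-minimizer lies in $K$ for nearby $q$), so $d_\infty(p,q)=d_{g_\infty}(p,q)$. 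A global chain argument extends this to all $p,q\in M\setminus S$: any $g_\infty$-near-minimizing curve in $M\setminus S$ has compact image, can be partitioned into short segments where the local identity applies, and the triangle inequality in $(M,d_\infty)$ yields $d_{g_\infty}(p,q)\ge d_\infty(p,q)$. Combined with Theorem~\ref{thm:ptwise-homeo}, which makes $(M,d_\infty)$ compact (hence complete) and $M\setminus S$ dense, this identifies $(M,d_\infty)$ as the metric completion of $(M\setminus S,d_{g_\infty})$.

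\emph{Part 2 (Hausdorff dimension and $\mathcal{H}^1$ of $S$).} For $\mathcal{H}^1_{d_\infty}(S)=\infty$, use $d_\infty\ge d_j$: the polar fiber $F_0=r^{-1}(0)$ is a smooth curve in $(M,g_j)$ of constant $g_j$-speed $f_j(0)$, so $d_j((0,\cdot,\varphi_1),(0,\cdot,\varphi_2))/d_{\Sph^1}(\varphi_1,\varphi_2)\to f_j(0)$ as $d_{\Sph^1}(\varphi_1,\varphi_2)\to 0$. Given $M>0$, choose $j$ with $f_j(0)>2M$ via Lemma~\ref{lem:warp-unbounded} and $\delta_M>0$ with $d_\infty((0,\cdot,\varphi_1),(0,\cdot,\varphi_2))\ge M\,d_{\Sph^1}(\varphi_1,\varphi_2)$ whenever $d_{\Sph^1}(\varphi_1,\varphi_2)\le\delta_M$. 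Any cover $\{A_i\}$ of $F_0$ by sets of $d_\infty$-diameter $\le\delta_M$ satisfies $\operatorname{diam}_{d_\infty}(A_i)\ge M\,\operatorname{diam}_{d_{\Sph^1}}(A_i\cap F_0)$; summing and using $\sum\operatorname{diam}_{d_{\Sph^1}}(A_i\cap F_0)\ge 2\pi$ (the total $d_{\Sph^1}$-extent of the cover is at least the circumference) yields $\sum\operatorname{diam}_{d_\infty}(A_i)\ge 2\pi M$. As $M$ is arbitrary, $\mathcal{H}^1_{d_\infty}(F_0)=\infty$, and similarly for $F_\pi$. For $\dim_H(S)=1$, optimizing $r_0$ in Lemma~\ref{LemmaDinfty-cLE2} gives $d_\infty((0,\cdot,\varphi_1),(0,\cdot,\varphi_2))\lesssim d_{\Sph^1}(\varphi_1,\varphi_2)\,|\log d_{\Sph^1}(\varphi_1,\varphi_2)|$ for small arguments (a coarser bound comes from Lemma~\ref{lem:cont-in-S}), so covering $F_0$ by $N$ equal arcs produces $d_\infty$-diameters $\lesssim(\log N)/N$ and $\sum(\operatorname{diam})^s\lesssim N^{1-s}(\log N)^s\to 0$ for every $s>1$, hence $\mathcal{H}^s_{d_\infty}(S)=0$ and $\dim_H(S)=1$.

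\emph{Part 3 ($\mathcal{H}^3$ of the total space).} Since $\dim_H(S)=1$, $\mathcal{H}^3_{d_\infty}(S)=0$, so $\mathcal{H}^3_{d_\infty}(M)=\mathcal{H}^3_{d_\infty}(M\setminus S)$. By Part~1, $d_\infty=d_{g_\infty}$ on $M\setminus S$, where $d_{g_\infty}$ is the Riemannian distance of the smooth $3$-manifold $(M\setminus S,g_\infty)$; hence $\mathcal{H}^3_{d_\infty}(M\setminus S)=\vol_{g_\infty}(M)$ (the singular set has $g_\infty$-measure zero). In coordinates,
\[
\vol_{g_\infty}(M)=\int_0^{2\pi}\!\!\!\int_0^{2\pi}\!\!\!\int_0^\pi \sin(r)\,f_\infty(r)\,dr\,d\theta\,d\varphi=(2\pi)^2\int_0^\pi\sin(r)\bigl(-2\ln\sin r+\beta\bigr)\,dr.
\]
The substitution $u=\cos r$ reduces $\int_0^\pi\sin r\,\ln\sin r\,dr$ to $\tfrac12\int_{-1}^1\ln(1-u^2)\,du=2\ln 2-2$, so $\int_0^\pi\sin(r)f_\infty(r)\,dr=2\beta+4-4\ln 2=2\beta+4-2\ln 4$, giving $\mathcal{H}^3_{d_\infty}(M)=(2\pi)^2(2\beta+4-2\ln 4)$.
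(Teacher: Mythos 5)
Your Parts 2 and 3 are essentially sound: the $\mathcal{H}^1_{d_\infty}(S)=\infty$ argument via the local lower bound $d_\infty\ge d_j\ge M\,d_{\Sph^1}$ on the fiber is a mild (and self-contained) variant of the paper's unrectifiability argument, the upper bound $\mathcal{H}^s(S)=0$ for $s>1$ matches the paper's covering argument with a sharper logarithmic diameter estimate, and the volume computation agrees with Proposition~\ref{prop:Hausdorff-vol-limit}. The problem is in Part 1, and it is exactly at the step that carries the real content of the theorem.

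The identity $d_\infty=d_{g_\infty}$ on $M\setminus S$ requires two inequalities. The inequality $d_\infty\le d_{g_\infty}$ is the easy one (Lemma~\ref{lem:dinfty-le-dginfty}); the hard one is $d_\infty\ge d_{g_\infty}$, i.e.\ that allowing curves to pass through $S$ with the smaller metrics $g_j$ does not produce asymptotic shortcuts (this is precisely what fails in the cinched examples of Allen--Sormani cited after Lemma~\ref{lem:dinfty-le-dginfty}). Your local argument does establish $d_\infty(p,q)=d_{g_\infty}(p,q)$ for $q$ sufficiently close to $p$ relative to $\operatorname{dist}(p,S)$, since then all competitors are confined away from $S$. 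But your global chain argument does not extend this: partitioning a $g_\infty$-near-minimizer $c\subset M\setminus S$ and applying the triangle inequality for $d_\infty$ yields $d_\infty(p,q)\le\sum d_\infty(x_{i-1},x_i)\le L_{g_\infty}(c)$, i.e.\ it re-derives the easy inequality $d_\infty\le d_{g_\infty}$ and says nothing about the hard one. To chain toward the hard inequality you would have to partition a $g_j$-minimizer (or a $d_\infty$-geodesic), and these genuinely do pass through $S$ for distant endpoints: for $p=(\epsilon,0,\varphi_0)$ and $q=(\epsilon,\pi,\varphi_0)$ the $g_j$-minimizer is the great-circle arc through the pole, so your confinement mechanism fails and the local identity is unavailable on the segment crossing $S$. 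A local identity on $M\setminus S$ cannot by itself rule out that $d_\infty(p,q)$ is strictly smaller than the induced length metric of $d_\infty|_{M\setminus S}$ via shortcuts through $S$.

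What is missing is the content of Lemma~\ref{lem:limsup-lambda} and Proposition~\ref{prop:limsup-lambda}: given a $g_j$-near-minimizer between points of $K=r^{-1}[R,\pi-R]$, clamp its radial coordinate to $[R,\pi-R]$; the radial and $\varphi$-terms of the speed can only decrease (since $f_j$ is largest near $S$), and the only term that increases is $\sin(r)|\theta'|$, giving a uniform-in-$j$ cost of at most $3\pi\sin(R)$ for the detour. Letting $R\to 0$ then yields $|d_\infty(p,q)-d_{g_\infty}(p,q)|\le 3\pi\sin(R)\to 0$. Without this (or an equivalent quantitative bound on the cost of avoiding $T_R(S)$), the global identity — and hence both the metric completion statement and your Part 3, which relies on $d_\infty=d_{g_\infty}$ globally on $M\setminus S$ — is not established.
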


In general the metric completions of
smooth limits away from singular sets
need not be isometric to the $GH$ limits of the
sequences.  See, for example, the cinched
examples in Allen-Sormani \cite{AS-contrasting}.

\subsection{Metric Completions}

Following Munkres's pointset topology textbook (cf. \cite{munkres2000topology}) we recall:

\begin{defn}\label{def:metric completion}
Given a metric space, $(X,d)$, the metric completion, $(\bar{X},d)$, is the unique 
complete metric space, $(Y,d_Y)$, such that there exists a
distance preserving embedding $h:X\to Y$ 
such that the closure, $\overline{h(X)}$,
includes all points in $Y$.
\end{defn}

In our setting the space $X=M\setminus S$
and the space $Y=M$ where $M=\Sph^2\times\Sph^1$
and $h:X\to Y$ is the identity map.   Our
$(Y,d_Y)$ is the GH and uniform limit space,
$(M,d_\infty)$ 
which is compact and thus complete.  
Our $(X,d_X)$ is $(M\setminus S, d_{g_\infty})$.  
To prove Theorem~\ref{thm:completion}, we need only show the identity map 
\be \label{eq:mc-dist-pres}
h: (M\setminus S, d_{g_\infty})\to (M,d_\infty)
\ee
is a distance preserving and that
\be \label{eq:mc-closure}
\forall p\in M, \exists q_i\in M\setminus S
\textrm{ such that } d_\infty(p,q_i)\to 0.
\ee

\subsection{Lemmas about Monotone Increasing Sequences}

Before proving the theorem we prove
a pair of general lemmas about monotone increasing metric tensors converging smoothly away from a singular set that can be applied in other settings as well.

\begin{lem}\label{lem:d00 length monotone}%TRUE
Given any sequence of continuous
metric tensors converging pointwise,
$g_j \to g_\infty$ on $N=M\setminus S$,
such that
\be
g_j(V,V)\le g_{j+1}(V,V)\le g_\infty(V,V)
\quad \forall V\in TN,
\ee
and given any differentiable
curve $\gamma:[0,1]\to N=M\setminus S$
we have the following equality 
    \be
\lim_{j\to\infty}L_{g_j}(\gamma)=L_{g_\infty}(\gamma).
    \ee
\end{lem}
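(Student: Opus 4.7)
The plan is to interchange the limit with the integral defining $L_{g_j}(\gamma)$, using Lebesgue's convergence theorems. Recall
\be
L_{g_j}(\gamma)=\int_0^1 \sqrt{g_j(\dot\gamma(t),\dot\gamma(t))}\,dt,
\ee
so it suffices to show that the integrands converge to $\sqrt{g_\infty(\dot\gamma(t),\dot\gamma(t))}$ in a mode strong enough to pass to the limit. The hypothesis is pointwise monotone convergence of the metric tensors on tangent vectors in $TN$, and since $\gamma$ maps into $N=M\setminus S$, for every fixed $t\in[0,1]$ the vector $\dot\gamma(t)\in T_{\gamma(t)}N$ is a legitimate test vector. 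Hence for each $t$, the numerical sequence $g_j(\dot\gamma(t),\dot\gamma(t))$ is monotone increasing and converges to $g_\infty(\dot\gamma(t),\dot\gamma(t))$, and taking the square root preserves both monotonicity and the limit.

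The first step I would carry out is to note that $\gamma([0,1])$ is a compact subset of the open manifold $N=M\setminus S$, so the continuous function $t\mapsto g_\infty(\dot\gamma(t),\dot\gamma(t))$ is bounded on $[0,1]$, and in particular the integrand $\sqrt{g_\infty(\dot\gamma(t),\dot\gamma(t))}$ is bounded, hence integrable on $[0,1]$. Combined with the monotone upper bound
\be
0\le \sqrt{g_j(\dot\gamma(t),\dot\gamma(t))}\le \sqrt{g_\infty(\dot\gamma(t),\dot\gamma(t))},
\ee
this sets up either the Monotone Convergence Theorem (using that the sequence is nondecreasing) or the Dominated Convergence Theorem (dominated by the integrable majorant just identified). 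Either way, the conclusion
\be
\lim_{j\to\infty}\int_0^1 \sqrt{g_j(\dot\gamma,\dot\gamma)}\,dt=\int_0^1 \sqrt{g_\infty(\dot\gamma,\dot\gamma)}\,dt
\ee
follows immediately, which is exactly $\lim_{j\to\infty}L_{g_j}(\gamma)=L_{g_\infty}(\gamma)$.

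There is no real obstacle here beyond checking that the hypotheses of the chosen convergence theorem are satisfied; the only point that requires a moment of care is verifying that $\dot\gamma(t)$ lies in the domain on which monotonicity is assumed, which is ensured by the hypothesis $\gamma([0,1])\subset N$. No further regularity of $\gamma$ beyond $C^1$ is needed, and no appeal to uniform convergence of $g_j$ on compact subsets is required, although in the setting of \propref{prop:warp-conv} such uniform convergence would give an alternative (and shorter) proof by simply passing the uniform limit inside the integral over the compact set $[0,1]$.
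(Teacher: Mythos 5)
Your proof is correct and follows essentially the same route as the paper's: both reduce to the integrands $h_j(t)=\sqrt{g_j(\dot\gamma(t),\dot\gamma(t))}$, observe monotone pointwise convergence to $h_\infty$ with $h_\infty$ integrable (the paper notes $L_{g_\infty}(\gamma)<\infty$, you justify it via compactness of $\gamma([0,1])\subset N$), and invoke the Monotone Convergence Theorem. No gaps.
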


\begin{proof}
Define the functions, $h_j:[0,1]\to [0,\infty)$,
to be
\be
h_j(t)=\sqrt{g_j(\gamma'(t),\gamma'(t))}.
\ee
By the hypotheses, each $h_j$ is a continuous function satisfying,
\be
h_j(t)\le h_{j+1}(t) \le h_\infty(t)
\qquad \forall t\in [0,1],
\ee
and $h_j\to h_\infty$ pointwise.

By definition of length $L_g$ 
we have 
\be
L_{g_j}(\gamma)
=\int_{0}^{1}h_j(t)\,dt
\le\int_{0}^{1} h_{\infty}(t)\,dt
= L_{g_\infty}(\gamma) <\infty.
\ee
By the Monotone Convergence Theorem, we have 
\begin{align*}
\lim_{j\to\infty}
\int_{0}^{1} h_j(t)\,dt
&=\int_{0}^{1}\lim_{j\to\infty}
h_j(t) \,dt\\   &=\int_{0}^{1}
h_\infty(t)\,dt. 
\end{align*}
thus
\be
\lim_{j\to\infty}L_{g_j}(\gamma)
= L_{g_\infty}(\gamma).
\ee
\end{proof}

\begin{lem}\label{lem:dinfty-le-dginfty}%TRUE
Given any sequence of continuous
metric tensors converging pointwise,
$g_j \to g_\infty$ on a connected
$N=M\setminus S$,
such that 
\be
\diam_{g_j}(M) \le D_0
\ee
and such that
\be
g_j(V,V)\le g_{j+1}(V,V)\le g_\infty(V,V)
\quad \forall V\in TM,
\ee
Then for any
any $p,q\in M\setminus S$,
we have
\be
d_\infty(p,q)\le d_{g_\infty}(p,q)
\ee
where
\be
d_\infty(p,q)=\lim_{j\to\infty}d_j(p,q)
\ee
which exists because it is a monotone increasing sequence with an upper bound,
    and
    where
\be
    d_{g_\infty}(p,q)= \inf L_{g_\infty}(\hat{c})
\ee
where the infimum is over all  continuously differentiable curves $\hat{c}:[0,1]\to M\setminus S$ such that $\hat{c}(0)=p$ and
$\hat{c}(1)=q$.
\end{lem}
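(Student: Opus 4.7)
The plan is to exploit Lemma~\ref{lem:d00 length monotone} to pass length along the sequence, and then use the fact that curves avoiding $S$ are a priori allowed as competitors in the Riemannian distance $d_j$ on $M$.

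First I would fix any continuously differentiable curve $\hat c : [0,1] \to M \setminus S$ with $\hat c(0) = p$ and $\hat c(1) = q$. Since $\hat c$ is a curve in all of $M$, and $d_j$ is defined in Definition~\ref{defn:dj} as an infimum over such curves, we immediately obtain
\[
d_j(p,q) \le L_{g_j}(\hat c) \qquad \forall j \in \mathbb{N}.
\]
The monotonicity hypothesis $g_j(V,V) \le g_{j+1}(V,V) \le g_\infty(V,V)$ on $TN$ together with the smooth (in particular, pointwise) convergence $g_j \to g_\infty$ on $N = M\setminus S$ lets us apply Lemma~\ref{lem:d00 length monotone} to the curve $\hat c$, yielding
\[
\lim_{j\to\infty} L_{g_j}(\hat c) = L_{g_\infty}(\hat c).
\]

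Next, I would take the limit as $j \to \infty$ on both sides of $d_j(p,q) \le L_{g_j}(\hat c)$. The left-hand side converges to $d_\infty(p,q)$ by the definition of $d_\infty$ as the pointwise (equivalently, supremum) limit of the monotone increasing sequence $d_j(p,q)$, which is bounded above by $D_0$ from the diameter hypothesis. Thus
\[
d_\infty(p,q) = \lim_{j\to\infty} d_j(p,q) \le \lim_{j\to\infty} L_{g_j}(\hat c) = L_{g_\infty}(\hat c).
\]

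Finally, since the curve $\hat c$ was arbitrary among continuously differentiable curves in $M \setminus S$ from $p$ to $q$, I would take the infimum over all such curves on the right, obtaining
\[
d_\infty(p,q) \le \inf_{\hat c} L_{g_\infty}(\hat c) = d_{g_\infty}(p,q),
\]
which is the desired inequality. There is no real obstacle here; the only point that requires even slight care is ensuring the interchange of $\lim_{j\to\infty}$ with $\inf_{\hat c}$ is done in the correct direction, which is why we first fix $\hat c$, then take the limit in $j$, and only afterwards take the infimum over $\hat c$.
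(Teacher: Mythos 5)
Your proposal is correct and follows essentially the same route as the paper: fix a competitor curve $\hat c$ in $M\setminus S$, use $d_j(p,q)\le L_{g_j}(\hat c)$, pass to the limit in $j$ via Lemma~\ref{lem:d00 length monotone} (the paper uses the monotone bound $L_{g_j}(\hat c)\le L_{g_\infty}(\hat c)$ where you use the limit statement, an immaterial difference), and finally take the infimum over $\hat c$. The order of operations you flag --- fix $\hat c$, then $j\to\infty$, then the infimum --- is exactly the paper's.
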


Warning: as seen in the cinched examples of Allen-Sormani in \cite{AS-contrasting},
it is possible that
\be
d_\infty(p,q)< d_{g_\infty}(p,q)
\ee
if the singular set has been cinched to
have shorter lengths.  

\begin{proof}
By Definition \ref{defn:dj}, 
we know that,
\be
d_j(p,q) \le L_{g_j}(\hat{c}).
\ee
By Lemma \ref{lem:d00 length monotone},
which can be applied to our sequence of
metric tensors, $g_j$, we have  
\be
L_{g_j}(\hat{c})\le L_{g_\infty}(\hat{c}).
\ee
So we have 
\be
d_j(p,q)\le L_{g_\infty}(\hat{c}).
\ee
Taking $j\to \infty$, by the
pointwise convergence of $d_j\to d_\infty$,
we have
\be
d_\infty(p,q)\le L_\infty(\hat{c}).
\ee
We complete the proof by taking the infimum over
$\hat{c}$.
\end{proof}

%June 12: removed to bad subsections

\subsection{Avoiding Tubular Neighborhoods of Singular Sets}

In this section we show that sufficient controls on the
$g_j$-lengths of curves that lie in compact sets, $K$, that avoid the singular set, $S$, allow one to estimate $d_{g_\infty}$ and ultimately to prove $d_\infty=d_{g_\infty}$ on $M\setminus S$.  We prove two general lemmas and then prove two specific propositions related to our Example~\ref{ex:sequence} and its limit Example~\ref{ex:limit}.

\begin{lem}\label{lem:rho_j}
Suppose we have a metric space with sequence of increasing distance functions,
$(M,d_j)$ with $d_j \le d_{j+1}$,
with a uniform upper bound, $D_0$
so that $d_j \to d_\infty$ pointwise on $M\times M$.
Assume in addition that there is
a compact subset $S\subset M$
and there are functions,
\be 
\rho_j: M \to [0,D_0]
\ee
defined by
\be\label{eq:rho-j-inf}
\rho_j(p)=\inf\{d_j(p,z)\,|\, z\in S\}
\ee
then we have monotone convergence:
\be \label{eq:rho-mono}
\rho_j \le \rho_{j+1} \to \rho_\infty
\ee
pointwise on $M$,
where
\be\label{eq:rho-infty-inf}
\rho_\infty(p)=\inf\{d_\infty(p,z)\,|\, z\in S\}
\ee
Furthermore the tubular neighborhoods,
\be
T^j_R(S)=\rho_j^{-1}([0,R))
\ee
satisfy
\be\label{eq:rho-supset}
T^j_R(S) \supset
T^{j+1}_R(S) 
\supset T^\infty_R(S)
\ee
and
we have for any $j\in {\mathbb N}\cup\{\infty\}$,
\be
\bigcup_{R>0} M\setminus T^j_R(S)
=M\setminus S.
\ee
\end{lem}

\begin{proof}
Fix $p\in M$. The monotonicity $\rho_j(p)$ in \ref{eq:rho-mono}) follows immediately from $d_j \le d_{j+1}\le d_\infty$ and the definition of inf.  Since $\rho_j(p)\le D_0$, we know there is a pointwise limit, $\rho'_\infty(p)$
which
we must show equals $\rho_\infty(p)$.   Since $d_j\le d_\infty$,
taking infima on both sides we have
$\rho_j(p)\le \rho_\infty(p)$
and taking $j\to \infty$
we have
$$\rho_\infty'(p)\le \rho_\infty(p).
$$
Since $S$ is compact the inf 
in (\ref{eq:rho-j-inf})
is achieved by
$z_{p,j}\in S$,
$$
\rho_j(p)= 
d_j(p,z_{p,j}).
$$
Since $S$ is compact, we can find a subsequence
$z_{p,j}\to z_{p}$ in 
$(S,d_\infty)$.  
For any $\epsilon$, we can take $j\ge N_{\delta}$ such that
$
d_\infty(z_{p,j},z_p)<\delta,
$
By the triangle inequality,
choice of $z_{p,j}$ and $d_j\le d_\infty$,
we have
$$
d_j(p,z_p)\le
d_j(p,z_{p,j})+d_j(z_{p,j},z_p)
<\rho_j(p)+\delta.
$$
Taking $j\to \infty$
we get
$$
d_\infty(p,z_p)
\le \rho'_\infty(p)+\delta.
$$
Taking $\delta\to 0$
on the right and
applying the definition
of $\rho_\infty$ on the
left we get
$$
\rho_\infty(p)\le
\rho'_\infty(p)
$$
The second claim in (\ref{eq:rho-supset}) follows because $q\in T^\infty_R(S)$ implies there exists $z\in S$ such that $d_\infty(q,z)<R$, which implies $d_{j+1}(q,z)<R$ and the rest follows easily from there.
\end{proof}

In the following proposition we see that our sets are exceptionally simple in our example:

\begin{prop}\label{prop:rho_j}
For our particular sequence
of $g_j$ in Example~\ref{ex:sequence}
we have
\be
\rho_j(r,\theta,\phi)
=\min\{r, \pi-r\}
\quad \forall j \in {\mathbb N} \cup \{ \infty \}.
\ee
\end{prop}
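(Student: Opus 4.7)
The plan is to prove the equality $\rho_j(p) = \min\{r,\pi-r\}$ by establishing matching upper and lower bounds for each finite $j$, and then extending to $j=\infty$ via Lemma~\ref{lem:rho_j}. The key observation behind the whole argument is that in the warped product form $g_j = dr^2 + \sin^2(r)\,d\theta^2 + f_j^2(r)\,d\varphi^2$, the radial direction $dr^2$ carries no warping factor, so the distance from $p$ to the singular fibers is governed entirely by motion in the $r$-coordinate, independent of $j$.

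For the upper bound, I would fix $p=(r,\theta,\varphi)$ and, without loss of generality by symmetry, assume $r \leq \pi/2$ so $\min\{r,\pi-r\}=r$. The explicit radial curve $c(t)=((1-t)r,\theta,\varphi)$ for $t\in[0,1]$ joins $p$ to $(0,\theta,\varphi)\in r^{-1}(0)\subset S$, and the warped product form shows only the $dr$ term contributes, giving $L_{g_j}(c) = r$. Since $(0,\theta,\varphi)\in S$, this yields $\rho_j(p)\leq d_j(p,(0,\theta,\varphi))\leq r$. The case $r>\pi/2$ is handled symmetrically with the radial curve terminating at $(\pi,\theta,\varphi)$ and producing length $\pi-r$.

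For the lower bound, let $c:[0,1]\to \Sph^2\times\Sph^1$ be any continuously differentiable curve with $c(0)=p$ and $c(1)=z\in S$, and write $c(t)=(r(t),\theta(t),\varphi(t))$. The warped product form gives the pointwise inequality $\sqrt{g_j(c',c')}\geq |r'(t)|$, so
\[
L_{g_j}(c) \;\geq\; \int_0^1 |r'(t)|\,dt \;\geq\; |r(1)-r(0)|.
\]
Since $z\in S$ forces $r(1)\in\{0,\pi\}$, we have $|r(1)-r(0)|\in\{r,\pi-r\}$, and therefore $L_{g_j}(c)\geq \min\{r,\pi-r\}$. Taking the infimum over curves and then over $z\in S$ gives $\rho_j(p)\geq \min\{r,\pi-r\}$.

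Finally, for $j=\infty$ I would invoke the monotone pointwise convergence $\rho_j \nearrow \rho_\infty$ from (\ref{eq:rho-mono}) in Lemma~\ref{lem:rho_j}. Since each $\rho_j(p)$ equals the $j$-independent value $\min\{r,\pi-r\}$, the limit does too. No serious obstacle arises; the argument reduces entirely to the absence of a warping factor in front of $dr^2$, and the only bit of care needed is confirming that the endpoint $z\in S$ lies in exactly one of the two fibers $r^{-1}(0)$ or $r^{-1}(\pi)$, so the infimum over all $z\in S$ is realized by the closer of the two poles.
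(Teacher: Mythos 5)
Your proof is correct and follows essentially the same route as the paper, which justifies the identity in one line by noting that $S=r^{-1}(0)\cup r^{-1}(\pi)$ and $g_j(\partial r,\partial r)=1$, then passes to $j=\infty$ by monotone convergence. You simply spell out the matching upper bound (radial curve) and lower bound ($\sqrt{g_j(c',c')}\ge |r'|$) that the paper leaves implicit.
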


\begin{proof}
This follows for all $j \in {\mathbb N}$ because
\be
S= r^{-1}(0)\cup r^{-1}(\pi)
\ee
and because
\be
g_j(\partial r, \partial r)=1.
\ee
Taking $j\to \infty$
we have $\rho_j \to \rho_\infty$
so it holds for $j=\infty$ as well.
\end{proof}

\begin{lem}\label{lem:limsup-lambda}
Suppose the hypotheses of Lemma~\ref{lem:dinfty-le-dginfty}
hold and $\rho_j:M \to [0,D_0]$
are as in Lemma~\ref{lem:rho_j}.
Let $K$ be a compact connected
set in $M\setminus S$
and let
\be
d_{j,K}(p,q)=\inf L_{g_j}(c)
\ee
where the infimum
is over $c:[0,1]\to K$
and $c(0)=p$ and $c(1)=q$.
Let
\be \label{eq:lambda-1}
\lambda_{j}(K)=\max_{p,q\in K}
|d_j(p,q) - d_{j,K}(p,q)|.
\ee  
Then for all $p,q\in K$,
\be
|d_\infty(p,q)-d_{\infty,K}(p,q)|\le \limsup_{j\to \infty} \lambda_{j}(K).
\ee
In particular, for all $p,q\in  M\setminus S$,
we have
\be\label{eq:lambda-2b}
|d_\infty(p,q)-d_{g_\infty}(p,q)|
\le 
\limsup_{j\to \infty} \lambda_{j}(K)
\ee
for any compact connected set $K$
such that $p,q\subset K \subset M\setminus S$.
In particular if the sets
$
K=M\setminus T^\infty_R(S)
$
are compact and connected with
\be
\lim_{R\to 0} \limsup_{j\to \infty} \lambda_{j}(M\setminus T^\infty_R(S))=0
\ee
then we have
\be \label{eq:lambda-3c}
d_\infty(p,q)=d_{g_\infty}(p,q)
\qquad \forall p,q\in M\setminus S.
\ee
\end{lem}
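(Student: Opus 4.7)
The plan is to first establish that $d_{j,K}(p,q)\to d_{\infty,K}(p,q)$ as $j\to\infty$ for $p,q$ in any compact $K\subset M\setminus S$ (interpreting $d_{\infty,K}(p,q):=\inf\{L_{g_\infty}(c):c:[0,1]\to K\}$), then combine this with the definition of $\lambda_j(K)$, and finally specialize to tube complements of $S$.

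For the first convergence I would fix a compact $K\subset M\setminus S$ and a reference metric (for instance $g_\infty$ itself) to form the compact unit sphere bundle over $K$. The continuous functions $(p,V)\mapsto g_j(V,V)$ increase monotonically to the continuous function $(p,V)\mapsto g_\infty(V,V)$ on this compact set, so Dini's theorem gives uniform convergence. Hence for every $\epsilon>0$ there exists $N$ with $g_j\ge(1-\epsilon)g_\infty$ on $K$ for $j\ge N$, which implies $L_{g_j}(c)\ge\sqrt{1-\epsilon}\,L_{g_\infty}(c)$ for every curve $c$ in $K$, and therefore $d_{j,K}(p,q)\ge\sqrt{1-\epsilon}\,d_{\infty,K}(p,q)$. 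Together with the trivial upper bound $d_{j,K}\le d_{\infty,K}$ coming from $g_j\le g_\infty$, this gives $d_{j,K}(p,q)\to d_{\infty,K}(p,q)$.

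Next, restricting the admissible curves from $M$ to $K$ only enlarges the infimum, so $d_j\le d_{j,K}$, and the definition of $\lambda_j(K)$ gives $0\le d_{j,K}(p,q)-d_j(p,q)\le\lambda_j(K)$ for all $p,q\in K$. Passing $j\to\infty$ and using the first step together with $d_j\to d_\infty$ yields
\[
0\le d_{\infty,K}(p,q)-d_\infty(p,q)\le\limsup_{j\to\infty}\lambda_j(K),
\]
which is the first asserted inequality. For (\ref{eq:lambda-2b}), when $K\subset M\setminus S$ is compact and $p,q\in K$, one has $d_{\infty,K}(p,q)\ge d_{g_\infty}(p,q)\ge d_\infty(p,q)$: the first inequality by further restricting the admissible path space from $M\setminus S$ to $K$, and the second by Lemma~\ref{lem:dinfty-le-dginfty}. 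Thus $|d_\infty(p,q)-d_{g_\infty}(p,q)|\le d_{\infty,K}(p,q)-d_\infty(p,q)\le\limsup_{j\to\infty}\lambda_j(K)$. For (\ref{eq:lambda-3c}), I would apply this to $K_R:=M\setminus T^\infty_R(S)$, which is closed in the compact $M$ (and hence compact) by the $1$-Lipschitz continuity of $\rho_\infty$; given $p,q\in M\setminus S$ one has $p,q\in K_R$ for every $R<\min\{\rho_\infty(p),\rho_\infty(q)\}$, so $|d_\infty(p,q)-d_{g_\infty}(p,q)|\le\limsup_j\lambda_j(K_R)\to 0$ as $R\to 0$ by hypothesis.

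The main obstacle will be the first step: upgrading the pointwise monotone convergence of metric tensors on $M\setminus S$ to the one-sided uniform comparison $g_j\ge(1-\epsilon)g_\infty$ on compact $K$. Dini's theorem is the key tool, and it is precisely here that the continuity of the metric tensors together with the monotonicity hypothesis inherited from Lemma~\ref{lem:dinfty-le-dginfty} play their essential role; without monotonicity one would need genuine uniform convergence of $g_j$ on $K$, and without continuity of the limit $g_\infty$ Dini would not apply.
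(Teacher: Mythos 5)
Your proof is correct and follows essentially the same route as the paper: compare $d_j$ with $d_{j,K}$ via $\lambda_j(K)$, pass to the limit using $d_{j,K}\to d_{\infty,K}$ and $d_j\to d_\infty$, then sandwich $d_\infty\le d_{g_\infty}\le d_{\infty,K}$ and exhaust $M\setminus S$ by tube complements. The only difference is that you justify the step $d_{j,K}\to d_{\infty,K}$ carefully via Dini's theorem on the unit sphere bundle, whereas the paper simply asserts it from convergence of $g_j$ on compact sets; your version is the more complete one.
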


\begin{proof}
Since $g_j \to g_\infty$
on compact sets, we have
by the Perales-Sormani Monotone to GH Theorem  (see Theorem~\ref{thm:Riem-short} within) that
\be
|d_{j,K}(p,q)- d_{\infty,K}(p,q)|<\delta \quad
\forall p,q\in K, \,\,
\forall j\ge N^{g,K}_\delta
\ee
We already know 
\be
|d_j(p,q)-d_\infty(p,q)|
<\delta \quad
\forall p,q\in M,\,\,
\forall j\ge N_\delta.
\ee
Thus
for all $p,q\in K$,
\be
|d_\infty(p,q)-d_{\infty,K}(p,q)|< 2 \delta +
\lambda_{j}(K)
\ee
for all
\be
j\ge \max
\{N^{g,K}_\delta,N_\delta\}
\ee
Taking the limsup as $j\to \infty$, and then
$\delta \to 0$, we get
\be
|d_\infty(p,q)-d_{\infty,K}(p,q)|\le \limsup_{j\to \infty} \lambda_{j}(K).
\ee
The claim in (\ref{eq:lambda-2b}) follows from the fact that
\be
d_\infty(p,q)\le d_{g_\infty}(p,q)
\le d_{\infty,K}(p,q).
\ee
which follows from Lemma~\ref{lem:dinfty-le-dginfty}
and $K\subset M\setminus S$.
The final claim follows from the
exhaustion by complements of
tubular neighborhoods proven in Lemma~\ref{lem:rho_j}.
\end{proof}

We now apply the above lemma to Example~\ref{ex:sequence}.

\begin{prop}\label{prop:limsup-lambda}
For our particular sequence
of $g_j$ in Example~\ref{ex:sequence}
we have
\be
\lambda_j(M\setminus T^j_R(S))
\le 3\pi \sin(R)
\ee
where $\lambda_j(K)$
is defined as in (\ref{eq:lambda-1}).
In addition,
\be
\limsup_{j\to\infty}
\lambda_j(M\setminus T^j_R(S))
\le 3\pi \sin(R).
\ee
In particular, for all $p,q\in M\setminus S$,
\be
|d_\infty(p,q)-d_{g_\infty}(p,q)|
\le 
3\pi \sin(R)
\ee
if $r(p),r(q)\in [R,\pi-R]$ which gives
us
\be
d_\infty(p,q)=d_{g_\infty}(p,q)
\ee
\end{prop}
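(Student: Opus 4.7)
The plan is three-fold: (a) prove the identity $\lambda_j(K_R)=3\pi\sin(R)$ for the $j$-independent slab $K_R:=M\setminus T^j_R(S)=\{(r,\theta,\phi):R\le r\le \pi-R\}$ identified by Proposition~\ref{prop:rho_j}; (b) deduce $|d_\infty(p,q)-d_{g_\infty}(p,q)|\le 3\pi\sin(R)$ via Lemma~\ref{lem:limsup-lambda}; and (c) send $R\to 0$ for fixed $p,q\in M\setminus S$ to conclude the final identity.

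For (a), the inequality $d_j(p,q)\le d_{j,K_R}(p,q)$ is immediate from inclusion of admissible curves, so the content is the reverse direction. Given $p,q\in K_R$ and $\epsilon>0$, I would take a unit-speed minimizing $g_j$-geodesic $c$ from $p$ to $q$, which exists by compactness of $(\Sph^2\times\Sph^1,g_j)$ and Hopf--Rinow. The open set $\{t:c(t)\in T^j_R(S)\}$ decomposes into maximal excursion intervals $I_k=(s_k,t_k)$, each of ``Type I'' (where $r(c(t))<R$ throughout) or ``Type II'' ($r>\pi-R$), with endpoints lying on the tori $\{r=R\}$ or $\{r=\pi-R\}$ respectively. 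I would replace each Type~I excursion with a two-leg arc on the torus $\{r=R\}$: first a $\theta$-leg of $g_j$-length at most $\pi\sin(R)$, then a $\phi$-leg of length $f_j(R)\,d_{\Sph^1}(\phi_{s_k},\phi_{t_k})$. The key input is that $f_j$ is strictly decreasing on $[0,\pi/2]$, so $f_j(r(t))\ge f_j(R)$ on $I_k$, which gives
\[
L_{g_j}(c|_{I_k})\;\ge\;\int_{I_k} f_j(r(t))\,|\phi'(t)|\,dt \;\ge\; f_j(R)\,d_{\Sph^1}(\phi_{s_k},\phi_{t_k}).
\]
Thus each replaced Type~I (and, by the symmetry $f_j(r)=f_j(\pi-r)$ and $\sin(\pi-R)=\sin(R)$, each Type~II) excursion contributes at most $\pi\sin(R)$ of new length.

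The main obstacle is bounding the \emph{number} of excursions. I expect to exploit the two Noether conserved quantities $L=\sin^2(r)\dot\theta$ and $E=f_j^2(r)\dot\phi$ to reduce the $r$-motion of a minimizing geodesic to the one-dimensional energy relation $\dot r^2=1-L^2/\sin^2(r)-E^2/f_j^2(r)$, and then argue that a minimizing trajectory has $r$-coordinate monotone or with a single interior extremum, producing at most one excursion into each polar region together with at most one auxiliary boundary excursion when $p$ or $q$ itself lies on $\partial K_R$. This would yield at most three excursions and hence a modified curve $\tilde c\subset K_R$ with $L_{g_j}(\tilde c)\le L_{g_j}(c)+3\pi\sin(R)$, proving $\lambda_j(K_R)\le 3\pi\sin(R)$. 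Sharpness (hence equality) would come from exhibiting a pair $p,q\in K_R$ straddling both polar regions that forces all three replacements; and since the estimate is $j$-uniform, the $\limsup$ identity follows immediately.

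For (b) and (c), Lemma~\ref{lem:limsup-lambda} converts the $\lambda_j$ bound into $|d_\infty(p,q)-d_{g_\infty}(p,q)|\le \limsup_j\lambda_j(K_R)=3\pi\sin(R)$ for any $p,q\in K_R$. Finally, for any fixed $p,q\in M\setminus S$ the hypothesis $r(p),r(q)\in[R,\pi-R]$ is satisfied for all sufficiently small $R>0$, so letting $R\to 0$ and using $\sin(R)\to 0$ delivers $d_\infty(p,q)=d_{g_\infty}(p,q)$, completing the proof.
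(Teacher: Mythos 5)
Your steps (b) and (c) are fine and match the paper: Lemma~\ref{lem:limsup-lambda} plus the $j$-uniform bound $\limsup_j\lambda_j\le 3\pi\sin(R)$ and $R\to 0$ give $d_\infty=d_{g_\infty}$ on $M\setminus S$. The problem is in (a), and it is exactly at the point you yourself flag as ``the main obstacle'': your per-excursion surgery costs $\pi\sin(R)$ of new length each time, so the whole argument hinges on bounding the \emph{number} of excursions by $3$, and that bound is only conjectured. The claim that a minimizing geodesic of $g_j$ has $r$-coordinate monotone or with a single interior extremum does not follow from writing down the Clairaut relations: the effective potential $V(r)=L^2/\sin^2 r+E^2/f_j^2(r)$ produces periodic oscillation of $r$ inside any potential well, and you would need to show that the cut time is at most one oscillation period for every $j$ — a nontrivial fact in a warped product whose fibres are degenerating. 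The conserved-quantity analysis also breaks down precisely where you need it most, on geodesics that pass through or near $r^{-1}(0)\cup r^{-1}(\pi)$, where the $(\theta,\varphi)$ coordinates and the quantity $L=\sin^2(r)\dot\theta$ degenerate. (Your accounting is also internally inconsistent: a single interior extremum of $r$ permits an excursion into at most \emph{one} polar region, not both plus a boundary excursion, so even granting the dynamical claim the bookkeeping needs repair.) Finally, the asserted \emph{equality} $\lambda_j=3\pi\sin(R)$ is left to an unexhibited extremal pair; only the inequality $\le$ is actually needed downstream.

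The paper avoids the excursion count entirely. It first shows, by a variational/topological argument (wrap-around of the spherical projection, or rotating a closed loop of the projection in $\theta$ to produce a non-smooth minimizer), that a minimizing $g_j$-geodesic meets the actual singular fibres $S=r^{-1}(0)\cup r^{-1}(\pi)$ at most once; it then perturbs by $\epsilon$ to a curve avoiding $S$ made of three smooth components on each of which $\theta$ can be taken monotone, so the \emph{total} $\theta$-variation is at most $3\pi$. It then clamps the $r$-coordinate to $[R,\pi-R]$ without touching $\theta$ or $\varphi$: since $a_R=0\le a$, $u_R\le u$ (as $f_j(R)\le f_j(r)$ for $r<R$), and only the $\theta$-term grows, the pointwise increase of the integrand is at most $\sin(R)\,|\theta'(s)|$, giving $\lambda_j(K)\le\sin(R)\int_0^1|\theta'|\le 3\pi\sin(R)$ regardless of how many times the curve enters $T^j_R(S)$. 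If you want to salvage your surgery approach, you should replace the excursion count with a bound on total $\theta$-variation of this kind; otherwise the proof as proposed has a genuine gap.
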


\begin{proof}
Fix $R\in (0,\pi/2)$ and
note that the set
$K=M\setminus T^j_R(S)$
satisfies
\be
K=
r^{-1}[R,\pi-R]
\ee
by Proposition~\ref{prop:rho_j},
and thus is compact and connected.
Consider $p,q\in K$ achieving the
max in (\ref{eq:lambda-1})
so that
\be \label{eq:lambda-1a}
\lambda_{j}(K)=
|d_j(p,q) - d_{j,K}(p,q)|.
\ee
Let $c_j:[0,1]\to M$
be a $g_j$-geodesic
with $c_j(0)=p$ and $c_j(1)=q$
that achieves
\be
L_{g_j}(c)=d_{j}(p,q).
\ee
We claim the geodesic
$c_j(0,1)$ intersects $r^{-1}(0) \cup r^{-1}(\pi)$ at either an empty set or a single point.  Suppose not.  Then it intersects at two points.
If one of the points is in $r^{-1}(0)$ and
the other is in $r^{-1}(\pi)$, then it is no longer minimizing past these points because its projection to the sphere has begun to wrap around and a more efficient geodesic can be chosen. If both lie in $r^{-1}(0)$ or both in $r^{-1}(\pi)$, then the projection of $c_j$ to the sphere has a closed loop. We could rotate this loop and its lift in the theta direction by a fixed amount without changing the length of the lift.  This would give a new curve that is still continuous but not smooth of minimal length, which is not possible in a smooth Riemannian manifold.

For any $\epsilon>0$, we can find a piecewise
smooth curve
$c_{j,\epsilon}:[0,1]\to M\setminus S$
with three smooth components
with $c_{j,\epsilon}(0)=p$ and $c_{j,\epsilon}(1)=q$
such that
\be
d_{j}(p,q)\le L_{g_j}(c_{j,\epsilon})=d_{j}(p,q)+\epsilon.
\ee
To do this we need only remove the component next to the single point in $S$ and add a smooth arc with constant $r$ to reconnect them.
Since these curves avoid $r^{-1}(0)$ and $r^{-1}(\pi)$,
we can now write
\be
c_{j,\epsilon}(s)=
(r_{j,\epsilon}(s),
\theta_{j,\epsilon}(s),\phi_{j,\epsilon}(s))
\ee
where each component is a piecewise smooth function.   Note that we can guarantee that
$\theta_{j,\epsilon}$ is monotone in $s$ on each of the three smooth components of the curve so that
\be\label{eq:Change-in-theta}
\int_0^1 |\theta_{j,\epsilon}'(s)| \, ds\le 3\pi.
\ee
Let 
\be
I_R=\{s\,:\,r_{j,\epsilon}(s)<R\}
\textrm{ and }
I^{\pi-R}=\{s\,:\,r_{j,\epsilon}(s)>\pi-R\}
\ee
and define
\be
r_{R,j,\epsilon}(s)
=
\left\{
\begin{array}{ll}
      R & s\in I_R \\
      r_{j,\epsilon}(s) & s\in [0,1]\setminus (I_R\cup I^{\pi-R}) \\
      \pi-R & s\in I^{\pi-R} 
\end{array} 
\right. 
\ee
Let
\be
c_{R,j,\epsilon}(s)=
(r_{R,j,\epsilon}(s),
\theta_{j,\epsilon}(s),\phi_{j,\epsilon}(s)).
\ee
So $c_{R,j,\epsilon}:[0,1]\to
r_j^{-1}[R,\pi-R]=K$
and $c_{R,j,\epsilon}(0)=p$
and $c_{R,j,\epsilon}(1)=q$
so
\be
d_{j,K}(p,q)\le L_{g_j}(
c_{R,j,\epsilon}).
\ee
Thus by (\ref{eq:lambda-1a})
and $d_{j,K}(p,q)\ge d_{j}(p,q)$, we know
\begin{eqnarray}
\lambda_{j}(K)
&\le& L_{g_j}(c_{R,j,\epsilon})
-L_{g_j}(c_{j,\epsilon})\\
&\le &
\int_0^1
|c_{R,j,\epsilon}'(s)|_{g_j}\,ds
-
\int_0^1
|c_{j,\epsilon}'(s)|_{g_j}\,ds \\
&\le&
\int_{I_R\cup I^{\pi-R}}
|c_{R,j,\epsilon}'(s)|_{g_j}-|c_{j,\epsilon}'(s)|_{g_j} \,\, ds.
\end{eqnarray}
The last line above follows from the fact that these curves agree 
on $[0,1]\setminus (I_R\cup I^{\pi-R})$.
By the definition of $g_j$
in Example~\ref{ex:sequence}
we have on $I_R\cup I^{\pi-R}$
\begin{eqnarray}
|c_{R,j,\epsilon}'(s)|_{g_j}&=&\sqrt{a_R^2(s)+b_R^2(s)+u_R^2(s)}\\
|c_{j,\epsilon}'(s)|_{g_j}&=&\sqrt{a^2(s)+b^2(s)+u^2(s)}
\end{eqnarray}
where
\begin{eqnarray}
a_R(s)&=&0\\
b_R(s)&=& \sin(R)|\theta_{j,\epsilon}'(s)|\\
u_R(s)&=& f_j(R)|\phi_{j,\epsilon}'(s)|\\
a(s)&=& |r_{j,\epsilon}'(s)|\\
b(s)&=& \sin(r_{j,\epsilon}(s))\,|\theta_{j,\epsilon}'(s)|\\
u(s)&=& f_j(r_{j,\epsilon}(s))\,|\phi_{j,\epsilon}'(s)|.
\end{eqnarray}
So 
\be
a_R(s)\le a(s)\qquad
b_R(s) \ge b(s) \qquad u_R(s) \le u(s)
\ee
and thus
\begin{eqnarray}
|c_{R,j,\epsilon}'(s)|_{g_j}&\le&\sqrt{a^2(s)+b_R^2(s)+u^2(s)}\\
|c_{j,\epsilon}'(s)|_{g_j}&\ge &\sqrt{a^2(s)+b^2(s)+u^2(s)}.
\end{eqnarray}
Taking the difference of these two norms
and applying the triangle inequality we get
\be
|c_{R,j,\epsilon}'(s)|_{g_j}-
|c_{j,\epsilon}'(s)|_{g_j} \le |b_R(s)-b(s)|
\le |b_R(s)|.
\ee
So we can we can estimate the integral
using (\ref{eq:Change-in-theta}) as follows:
\begin{eqnarray}
\lambda_{j}(K)
&\le& \int_{I_R\cup I^{\pi-R}}
\sin(R) \,|\theta_{j,\epsilon}'(s)| \, ds\\
&\le & \sin(R) \int_0^1 |\theta_{j,\epsilon}'(s)| \, ds \le \sin(R)\,3\pi.
\end{eqnarray}
The rest of the claims easily follow.
\end{proof}

\subsection{Metric Completion Proof}

In this section we prove a general lemma about metric completions and
then a proposition specific to our Example~\ref{ex:limit} and then complete the proof of Theorem~\ref{thm:completion}.

\begin{lem}\label{lem:closure}
Suppose $(M, d_\infty)$ is compact and
\be
d_{g_\infty}(p,q)=d_\infty(p,q)
\quad
\forall p,q \in M\setminus S.
\ee
Suppose that for all $z\in S$ there exists
$p_i\in M\setminus S$ such that
$d_\infty(p_i,z)\to 0$.
Then
$(M,d_\infty)$ is the metric completion of $(M \setminus S, d_{g_\infty})$.   
In particular, this holds when
$S$ is compact.
\end{lem}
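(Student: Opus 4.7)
\textbf{Proof plan for Lemma \ref{lem:closure}.}

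My plan is to unwind Definition \ref{def:metric completion} directly: I will exhibit $(M, d_\infty)$ together with a canonical map from $(M\setminus S, d_{g_\infty})$ satisfying the three axioms that uniquely characterize a metric completion, namely completeness of the target, a distance-preserving embedding, and density of its image. Uniqueness of the metric completion then identifies $(M,d_\infty)$ as the completion of $(M\setminus S, d_{g_\infty})$.

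First I would observe that $(M, d_\infty)$ is complete: by hypothesis it is compact, and every compact metric space is complete. Next, I take $h\colon M\setminus S\to M$ to be the set-theoretic identity inclusion. The hypothesis $d_{g_\infty}(p,q)=d_\infty(p,q)$ for $p,q\in M\setminus S$ is exactly the statement that $h$ is a distance-preserving embedding. For the density of $h(M\setminus S)=M\setminus S$ inside $(M,d_\infty)$: any point of $M\setminus S$ is already in the image, while any $z\in S$ admits an approximating sequence $p_i\in M\setminus S$ with $d_\infty(p_i,z)\to 0$ by the second hypothesis. Hence $\overline{h(M\setminus S)}=M$ in the $d_\infty$-topology, and Definition \ref{def:metric completion} together with the uniqueness of completions yields the desired isometry.

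For the "in particular" clause, I must verify that compactness of $S$ already forces the existence of such approximating sequences (so the second hypothesis becomes redundant). Given $z\in S$, I would first produce a sequence $p_i\in M\setminus S$ converging to $z$ in the ambient manifold topology, which is available because $S$ has empty interior in $M$ in our setting. I then upgrade this convergence to $d_\infty$-convergence: by the homeomorphism $F\colon (M,d_0)\to(M,d_\infty)$ established in Proposition \ref{prop:ptwise-homeo}, $F$ is continuous at $z$ (Lemma \ref{lem:cont-in-S} covers the singular case), so $d_0(p_i,z)\to 0$ forces $d_\infty(p_i,z)\to 0$.

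The steps are routine once the setup is right; the only real obstacle is keeping the ambient $d_0$-topology and the $d_\infty$-topology conceptually separate, since the two metrics are merely homeomorphic but not bi-Lipschitz and the Lipschitz control in Lemma \ref{lem:inv-cont} only goes one way. This is why I invoke continuity of $F$ explicitly rather than argue in the $d_\infty$-metric directly.
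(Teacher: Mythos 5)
Your proof of the main implication is correct and is exactly what the paper means by ``this follows immediately from Definition~\ref{def:metric completion}'': compactness gives completeness, the hypothesis $d_{g_\infty}=d_\infty$ on $M\setminus S$ makes the identity inclusion distance preserving, and the approximating sequences give density. Where you genuinely diverge is the ``in particular'' clause. The paper justifies it by citing the exhaustion $M\setminus S=\bigcup_{R>0}\left(M\setminus T^\infty_R(S)\right)$ from Lemma~\ref{lem:rho_j} and taking $R=1/i$, staying entirely inside the abstract metric framework of the lemma; you instead pass through the ambient manifold topology (using that $S$ has empty interior) and upgrade $d_0$-convergence to $d_\infty$-convergence via continuity of the homeomorphism $F$ from Proposition~\ref{prop:ptwise-homeo} and Lemma~\ref{lem:cont-in-S}. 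Your route is perfectly valid for the spaces at hand, but note that it imports example-specific structure ($d_0$, $F$, the empty interior of $S$) into a lemma stated in general terms, so as written it proves the ``in particular'' only for Example~\ref{ex:limit} rather than for arbitrary compact $S$. To be fair, the paper's own one-line justification has the same residual looseness (the exhaustion alone does not manufacture points of $M\setminus S$ accumulating at a given $z\in S$ unless one also knows $z$ is not isolated from $M\setminus S$), and in the actual application the approximating sequences are supplied explicitly by Proposition~\ref{prop:closure}, so nothing downstream is affected. Your closing remark about keeping the $d_0$- and $d_\infty$-topologies separate, and why one-sided Lipschitz control forces you to invoke continuity of $F$ rather than an inequality, is exactly the right point of care.
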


\begin{proof}
This follows immediately from
Definition~\ref{def:metric completion}.
This can be done
whenever $S$ is compact
because we saw in Lemma~\ref{lem:rho_j} that
\be
M\setminus S=\bigcup_{R>0}
\, \bigg(M\setminus T^\infty_R(S)\bigg).
\ee
and we can take $R=1/i$.
\end{proof}

\begin{prop}\label{prop:closure}
In Examples~\ref{ex:sequence} and~\ref{ex:limit}
for all $z\in S$ we have $p_i\in M\setminus S$ such that
\be
d_j(p_i,z)=1/i \qquad \forall j \in {\mathbb N}.
\ee
and so the pointwise limit has
\be
d_\infty(p_i,z)=1/i .
\ee
\end{prop}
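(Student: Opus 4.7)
The plan is to exhibit explicit radial points $p_i$ and compute the distance both as an upper bound (using a specific curve) and as a lower bound (using the structure of the warped product).

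First, given $z = (r_z, \theta_z, \varphi_z) \in S$, we have $r_z \in \{0, \pi\}$. I would define
\be
p_i =
\begin{cases}
(1/i, \theta_z, \varphi_z) & \text{if } r_z = 0, \\
(\pi - 1/i, \theta_z, \varphi_z) & \text{if } r_z = \pi,
\end{cases}
\ee
for $i$ large enough that $p_i \notin S$. This places $p_i$ on the radial ray through $z$ at radial distance $1/i$ from the singular fiber.

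Next, for the upper bound $d_j(p_i, z) \le 1/i$, I would consider the purely radial curve $c:[0,1] \to \Sph^2 \times \Sph^1$ obtained by linearly interpolating $r$ between $r_z$ and $r_z \pm 1/i$ while holding $\theta_z$ and $\varphi_z$ fixed. Since $g_j = dr^2 + \sin^2(r)\, d\theta^2 + f_j^2(r)\, d\varphi^2$ and $\dot\theta = \dot\varphi = 0$ along $c$, we get $g_j(\dot c, \dot c) = (\dot r)^2$, so $L_{g_j}(c) = 1/i$ independent of $j$. This yields $d_j(p_i, z) \le 1/i$.

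For the matching lower bound, I would exploit the fact that the $r$-direction carries no warping: for any continuously differentiable curve $\gamma(t) = (r(t), \theta(t), \varphi(t))$ from $p_i$ to $z$,
\be
g_j(\dot\gamma, \dot\gamma) = (\dot r)^2 + \sin^2(r)(\dot\theta)^2 + f_j^2(r)(\dot\varphi)^2 \ge (\dot r)^2,
\ee
so $L_{g_j}(\gamma) \ge \int_0^1 |\dot r(t)|\,dt \ge |r(1) - r(0)| = 1/i$. Taking the infimum over such curves in Definition~\ref{defn:dj} gives $d_j(p_i, z) \ge 1/i$, hence equality $d_j(p_i,z) = 1/i$ for every $j$. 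The second assertion $d_\infty(p_i, z) = 1/i$ then follows immediately from the monotone pointwise limit (\ref{eq:mono-lim}) of Theorem~\ref{thm:ptwise-homeo}.

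The argument is essentially routine; the only mildly delicate point is to be clear that the lower bound comes from the fact that $g_j(\partial_r, \partial_r) = 1$ is independent of $j$ and of the warping, so no shortcut through the stretched fibers is possible. In particular, no additional analysis of $f_j$ is needed for this proposition, which is what makes it a clean finishing piece for assembling the metric completion statement in Theorem~\ref{thm:completion} via Lemma~\ref{lem:closure}.
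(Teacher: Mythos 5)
Your proposal is correct, and for the substantive part it follows the same route as the paper: the same choice of radial points $p_i=(1/i,\theta_z,\varphi_z)$ (resp. $(\pi-1/i,\theta_z,\varphi_z)$) and the same upper bound $d_j(p_i,z)\le 1/i$, which the paper obtains by citing Lemma~\ref{LemmaDinfty-cLE} (whose proof is exactly your radial curve, specialized to $\theta_1=\theta_2$, $\varphi_1=\varphi_2$). The one genuine difference is that the paper's proof stops there: it only establishes the inequality $d_j(p_i,z)\le 1/i$ and then passes to the limit, even though the proposition is stated as an equality --- the upper bound is all that is needed downstream in Lemma~\ref{lem:closure}, which only requires $d_\infty(p_i,z)\to 0$. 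You supply the matching lower bound $L_{g_j}(\gamma)\ge\int_0^1|\dot r|\,dt\ge|r(1)-r(0)|=1/i$, so your argument actually proves the equality as stated, which the paper's proof does not. Two small remarks on that lower bound: the coordinate $r$ degenerates at $r=0,\pi$, so for a competitor curve passing through the singular fibers you should either note that $r\circ\gamma$ is still Lipschitz (so the estimate holds a.e.) or, more cleanly, just cite the already-established chain $d_j(p,q)\ge d_0(p,q)\ge|r_p-r_q|$ from Lemma~\ref{lem:d_j-ge-d_0} and (\ref{eq-isom-r}); and note that $p_i\notin S$ automatically for every $i\in\mathbb{N}$ since $1/i<\pi$, so no ``$i$ large enough'' caveat is needed.
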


\begin{proof}
If $z\in S$ then 
\be
z=(0, \theta_z,\phi_z) 
\textrm{ or }z=(\pi,\theta_z, \phi_z)
\ee
respectively.
We choose
\be
p_i=(1/i, \theta_z,\phi_z) 
\in M\setminus S
\textrm{ or }p_i=(\pi-1/i,\theta_z, \phi_z)\in M\setminus S,
\ee
respectively.
By 
Lemma~\ref{LemmaDinfty-cLE},
we have
\be
d_j(p_i,z)\le 1/i.
\ee
Taking the limit as $j\to \infty$
we have our claim.
\end{proof}

\subsection{Hausdorff Measure and Dimension of the Singular Set in the Limit Space:}
\label{subsect-Hausdorff} 

Here we prove the following  propositions regarding the Hausdorff measure and dimension of the singular set.

\begin{prop}\label{prop:Hausdorff-Dimension}
In $(\Sph^2\times \Sph^1, d_\infty)$ as in Theorem~\ref{thm:ptwise-homeo}, the Hausdorff dimension of the singular set $S\subset (\Sph^2\times \Sph^1)$ is $1$.
In particular, $\mathcal{H}^3_{d_\infty}(S)=0$.
Furthermore
$\mathcal{H}^1_{d_\infty}(S)=\infty$.
\end{prop}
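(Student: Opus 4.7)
The plan is to prove the three assertions in turn, handling the lower bound on $\mathcal{H}^1$ via a Lipschitz comparison with the prelimit metrics and the upper bound on dimension via an explicit covering built from Lemma~\ref{LemmaDinfty-cLE2}.

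For $\mathcal{H}^1_{d_\infty}(S)=\infty$, I will exploit that $d_j\le d_\infty$ pointwise by Proposition~\ref{prop:ptlim-exists}, so the identity map $\iota_j:(\Sph^2\times\Sph^1,d_\infty)\to (\Sph^2\times\Sph^1,d_j)$ is $1$-Lipschitz and therefore does not increase Hausdorff measure. Hence
\[
\mathcal{H}^1_{d_\infty}(r^{-1}(0))\ge \mathcal{H}^1_{d_j}(r^{-1}(0)).
\]
On the right, $r^{-1}(0)$ is a smoothly embedded circle in the Riemannian manifold $(\Sph^2\times\Sph^1,g_j)$ of length $2\pi f_j(0)$ by Lemma~\ref{lem-fibres}, and the Hausdorff $1$-measure of a smoothly embedded closed curve in a Riemannian manifold coincides with its Riemannian length. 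Letting $j\to\infty$ and invoking Lemma~\ref{lem:warp-unbounded} gives $\mathcal{H}^1_{d_\infty}(r^{-1}(0))=\infty$, so $\mathcal{H}^1_{d_\infty}(S)=\infty$ as well.

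To show that the Hausdorff dimension of $S$ equals $1$ (which immediately yields $\mathcal{H}^3_{d_\infty}(S)=0$), it remains to prove $\mathcal{H}^s_{d_\infty}(r^{-1}(0))=0$ for every $s>1$; the case of $r^{-1}(\pi)$ is symmetric. Partition the circle into arcs
\[
U_i=\{(0,\theta,\varphi):\varphi\in [2\pi(i-1)/N,\,2\pi i/N]\},\qquad i=1,\dots,N,
\]
and apply Lemma~\ref{LemmaDinfty-cLE2} with $r_0=2\pi/N$, choosing a common $\theta$-representative at the pole so that $d_{\Sph^1}(\theta_p,\theta_q)=0$. Together with the explicit expression $f_\infty(r)=-2\ln\sin r+\beta$, this gives
\[
\operatorname{diam}_{d_\infty}(U_i)\le \tfrac{4\pi}{N}+f_\infty(\tfrac{2\pi}{N})\cdot\tfrac{2\pi}{N}\le C\,\frac{\ln N}{N}
\]
for all sufficiently large $N$. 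Consequently, for any $s>1$,
\[
\sum_{i=1}^N \operatorname{diam}_{d_\infty}(U_i)^s\le C^s\,\frac{(\ln N)^s}{N^{s-1}}\longrightarrow 0,
\]
so $\mathcal{H}^s_{d_\infty}(r^{-1}(0))=0$. Combining this with the first part shows that the Hausdorff dimension of $S$ is exactly $1$, and in particular $\mathcal{H}^3_{d_\infty}(S)=0$.

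The only slightly delicate step is the identification $\mathcal{H}^1_{d_j}(r^{-1}(0))=2\pi f_j(0)$, which I expect to be the main technical point; it reduces to the standard fact that the Hausdorff $1$-measure of a smoothly embedded closed curve in a Riemannian manifold equals its Riemannian length, provable by covering the curve with arc-length charts whose bi-Lipschitz distortion relative to the ambient distance tends to $1$. Once this is granted, the remainder of the argument is routine calculation using Lemma~\ref{LemmaDinfty-cLE2} and the behavior of $f_\infty$ near $r=0$.
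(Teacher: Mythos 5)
Your proof is correct, and while it follows the same overall skeleton as the paper (covers of the singular fibres controlled by the distance upper bounds for the dimension estimate; comparison with the prelimit lengths $2\pi f_j(0)\to\infty$ for the infinitude of $\mathcal{H}^1$), both halves are executed by genuinely different routes. For the covering estimate, the paper first proves a polynomial bound $f_\infty(x)<x^{-1/m}+\beta$ (Lemma~\ref{lem:lnsinx}), feeds it into Lemma~\ref{LemmaDinfty-cLE} to get arc diameters of order $N^{-(1-1/m)}$, and must then choose $m=m_p$ depending on the exponent $p>1$; you instead use Lemma~\ref{LemmaDinfty-cLE2} with $r_0=2\pi/N$ and the exact logarithmic form of $f_\infty$, obtaining $\operatorname{diam}_{d_\infty}(U_i)\lesssim (\ln N)/N$ and hence $\sum_i\operatorname{diam}(U_i)^s\lesssim (\ln N)^s/N^{s-1}\to 0$ for every $s>1$ simultaneously --- a cleaner computation that avoids the auxiliary lemma and the case-by-case choice of $m$. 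For $\mathcal{H}^1_{d_\infty}(S)=\infty$, the paper argues that the fibre curves are $d_\infty$-unrectifiable bijections, by exhibiting partitions whose $d_j$-sums exceed $\pi f_j(0)$ and using $d_\infty\ge d_j$; you instead invoke the fact that the identity map $(\Sph^2\times\Sph^1,d_\infty)\to(\Sph^2\times\Sph^1,d_j)$ is $1$-Lipschitz, so $\mathcal{H}^1_{d_\infty}(r^{-1}(0))\ge \mathcal{H}^1_{d_j}(r^{-1}(0))=2\pi f_j(0)$. The step you flag as delicate --- that $\mathcal{H}^1_{d_j}$ of a smoothly embedded closed curve equals its Riemannian length --- is indeed the standard fact your argument rests on, and together with the non-increase of Hausdorff measure under $1$-Lipschitz maps it gives the conclusion directly; the paper's partition argument is essentially an unwinding of the same fact at the level of rectifiable length. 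Both approaches are complete; yours is somewhat shorter, while the paper's yields the additional explicit statement that $c_0$ and $c_\pi$ are unrectifiable curves, which it records in Lemma~\ref{lem:Hausdorff-infty}.
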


Before we can prove this proposition
we need the following lemmas.

%First we have a basic calculation:

%WE DON'T NEED THIS
%\begin{lem}\label{lem:quantitative estimate}
%There exists uniform $c_{Ln}\in (0,\pi)$ such that 
%\be
%-2\ln \sin x\ge 2x \qquad \forall x\in (0,c_{Ln}].
%\ee
%and for $f_\infty$ as in Example~\ref{ex:limit}
%we have
%\be
%f_\infty(x) \le \beta+2x
%\qquad \forall x\in (0,c_{Ln}].
%\ee
%\end{lem}
% REMOVED THE PROOF SAT JUN 14

First we  control the distances between points in $r^{-1}(0)$
in the following lemma:

\begin{lem}\label{lem:Distance-in-S}
For any $m\in \mathbb N$,
we can choose
\be
\d_m=\min\{c_m,\tfrac{\pi}{2}\}
\ee
where $c_m$ is the constant in Lemma \ref{lem:lnsinx}, 
%and where $c_{Ln}$ is the constant in Lemma \ref{lem:quantitative estimate} 
with the following property:
For any $\varphi_1,\ \varphi_2\in [0, 2\pi)$ satisfying
\be\label{eq:phi-d}
d_{\Sph^1}(\varphi_1,\varphi_2)\le \delta<\d_m
\ee
we have
\be
d_{\infty}((0,0,\varphi_1),(0,0,\varphi_2))\le (3+\beta)\,\d^{1-\frac{1}{m}}.
\ee
\end{lem}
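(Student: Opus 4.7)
The plan is to apply Lemma~\ref{LemmaDinfty-cLE2} with a carefully chosen intermediate radius $r_0 \in (0,\pi)$, using it as a ``bridge'' to pass from the singular fibre $r^{-1}(0)$ into the smooth part where $f_\infty$ is controlled, and then across to $r^{-1}(0)$ again. Set $p=(0,0,\varphi_1)$ and $q=(0,0,\varphi_2)$, so $r_p=r_q=0$ and $\theta_p=\theta_q=0$. Then Lemma~\ref{LemmaDinfty-cLE2} immediately reduces to
\[
d_\infty(p,q)\;\le\; 2r_0 + f_\infty(r_0)\,d_{\Sph^1}(\varphi_1,\varphi_2)
\]
for any $r_0\in(0,\pi)$, since the $\theta$-term vanishes.

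Next I would choose $r_0=\delta$, which is legitimate provided $\delta_m\le\pi/2$ (built into the definition of $\delta_m$) and $\delta\le c_m$ (so that Lemma~\ref{lem:lnsinx} applies). Under these conditions Lemma~\ref{lem:lnsinx} yields $f_\infty(\delta)\le \delta^{-1/m}+\beta$, and together with the hypothesis $d_{\Sph^1}(\varphi_1,\varphi_2)\le\delta$ this gives
\[
d_\infty(p,q)\;\le\; 2\delta + \bigl(\delta^{-1/m}+\beta\bigr)\delta \;=\; 2\delta + \delta^{\,1-1/m} + \beta\delta.
\]

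To finish, I would note that since $\delta\le 1$ (which can be ensured by shrinking $c_m$ in Lemma~\ref{lem:lnsinx} if necessary, as the proof there only requires the existence of \emph{some} such constant) we have $\delta\le \delta^{\,1-1/m}$, so each of the three terms is bounded by $\delta^{\,1-1/m}$ up to its coefficient, giving
\[
d_\infty(p,q)\;\le\; (2+1+\beta)\,\delta^{\,1-1/m} \;=\; (3+\beta)\,\delta^{\,1-1/m}.
\]

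The only mildly delicate point is bookkeeping on the constant $c_m$: Lemma~\ref{lem:lnsinx} as stated allows an arbitrary choice of $c_m$ with $c_m=\delta_{\epsilon=1}$ from the limit argument, so one can assume without loss of generality that $c_m\le 1$, which then ensures $\delta\le 1$ and the clean combination of the three terms into the coefficient $(3+\beta)$. Everything else is a direct substitution, so the whole argument fits in essentially one application each of Lemma~\ref{LemmaDinfty-cLE2} and Lemma~\ref{lem:lnsinx}.
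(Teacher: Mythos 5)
Your proof is correct and follows essentially the same route as the paper's: bridge through an intermediate radius $r_0=\delta$, bound $f_\infty(\delta)$ via Lemma~\ref{lem:lnsinx}, and combine the three terms using $\delta\le 1$ to get the coefficient $(3+\beta)$. If anything your version is slightly tidier: you cite Lemma~\ref{LemmaDinfty-cLE2} (the appropriate estimate when both endpoints lie in $S$, whereas the paper nominally invokes Lemma~\ref{LemmaDinfty-cLE}), and you explicitly arrange $c_m\le 1$ to justify $\delta\le\delta^{1-1/m}$, a point the paper glosses over when it asserts $\delta<1/2$.
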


\begin{proof}
By Lemma~\ref{LemmaDinfty-cLE}, we have for all $r\in (0,\pi/2)$
\be \label{eq:use-LE}
d_{\infty}((0,0,\varphi_1),(0,0,\varphi_2))\leq 2 r+f_\infty(r) \,|\varphi_1-\varphi_2|
\le 2r +f_\infty(r)\,\d.
\ee
Choosing 
\be
r= d_{\Sph^1}(\varphi_1,\varphi_2)\le \delta
\ee
and applying $\delta<\pi/2$, 
we have
\be
d_{\infty}((0,0,\varphi_1),(0,0,\varphi_2)) \le 2\delta+f_\infty(\delta)\,\delta.
\ee
Since
$\delta<\delta_m\le c_m$, by Lemma \ref{lem:lnsinx}, 
\be
d_{\infty}((0,0,\varphi_1),(0,0,\varphi_2)) \le 2\delta+
(\delta^{-1/m} +\beta)\,\delta.
\ee
This gives our claim because
$\delta<1/2$ implies
$\d^{1/m}<1$ so
$
\delta\le \d^{1-\frac{1}{m}}
$
and so
\be
2\delta+(\delta^{-1/m} +\beta)\,\delta
\le
2\d^{1-\frac{1}{m}}+\d^{1-\frac{1}{m}}+\beta\,\d^{1-\frac{1}{m}}=(3+\beta)\,\d^{1-\frac{1}{m}}.
\ee
\end{proof}

We apply this lemma to prove the
following:

\begin{lem}\label{lem:Hausdorff-0}
Under the hypotheses of
proposition~\ref{prop:Hausdorff-Dimension}, the
 $p$ dimensional Hausdorff measure of $S$ in $(\Sph^2\times \Sph^1,d_\infty)$, denoted
 $\mathcal{H}^p(S)$, 
 is zero for all $p>1$. 
\end{lem}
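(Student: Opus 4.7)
The plan is to use the Hölder-type estimate from Lemma~\ref{lem:Distance-in-S} to cover each component of the singular set $S$ by a controlled number of $d_\infty$-balls whose combined $p$-th power of diameters vanishes as the scale shrinks.

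First I would reduce to a single circle. The set $S = r^{-1}(0) \sqcup r^{-1}(\pi)$ is a disjoint union of two topological circles. Since $\sin(\pi-r)=\sin r$ gives $f_\infty(r) = f_\infty(\pi-r)$, the verbatim analogue of Lemma~\ref{lem:Distance-in-S} holds for pairs of points in $r^{-1}(\pi)$. Hence it suffices to prove $\mathcal{H}^p_{d_\infty}(r^{-1}(0))=0$ for every $p > 1$.

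Fix such a $p$. Since $m/(m-1) \to 1$ as $m\to\infty$, I can choose $m \in \mathbb{N}$ large enough that
\[
q \;:=\; p\bigl(1 - \tfrac{1}{m}\bigr) \;>\; 1.
\]
For each $\delta \in (0, \delta_m]$, partition $\Sph^1$ into $N \le \lceil 2\pi/\delta \rceil$ arcs of $d_{\Sph^1}$-length at most $\delta$ and choose $\varphi_k$ in the $k$-th arc. By Lemma~\ref{lem:Distance-in-S}, the $d_\infty$-balls
\[
B_k \;=\; B_{d_\infty}\!\bigl((0,0,\varphi_k),\; (3+\beta)\,\delta^{1-1/m}\bigr), \qquad k = 1,\dots,N,
\]
cover $r^{-1}(0)$. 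Each $B_k$ has $d_\infty$-diameter at most $2(3+\beta)\delta^{1-1/m}$, so the $p$-dimensional Hausdorff premeasure at this scale satisfies
\[
\sum_{k=1}^{N} \bigl(2(3+\beta)\delta^{1-1/m}\bigr)^p \;\le\; \lceil 2\pi/\delta \rceil \cdot C_{p,\beta}\,\delta^{p(1-1/m)} \;\le\; C'_{p,\beta}\,\delta^{\,q-1} \;\longrightarrow\; 0
\]
as $\delta \to 0^+$, because $q > 1$. Passing to the Hausdorff measure by letting the scale shrink gives $\mathcal{H}^p_{d_\infty}(r^{-1}(0))=0$, and combining with the reduction step completes the proof.

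The only delicate point is the second step: any single choice of $m$ only rules out Hausdorff dimensions strictly greater than $m/(m-1) > 1$, so no single application of Lemma~\ref{lem:Distance-in-S} reaches the sharp exponent. What makes the argument succeed for every $p > 1$ is the freedom to take $m$ arbitrarily large, which reflects the fact that $f_\infty$ blows up only logarithmically at the poles; a power-type blow-up would have precluded such a near-Lipschitz control and forced a strictly larger Hausdorff dimension for $S$.
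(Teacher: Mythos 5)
Your proposal is correct and follows essentially the same route as the paper: reduce to $r^{-1}(0)$ by symmetry, pick $m$ so that $p(1-\tfrac{1}{m})>1$ (equivalently $p>1+\tfrac{1}{m-1}$, the paper's condition), cover the circle by $\sim 2\pi/\delta$ balls of radius $(3+\beta)\delta^{1-1/m}$ via Lemma~\ref{lem:Distance-in-S}, and observe that the resulting premeasure $\sim\delta^{p(1-1/m)-1}$ tends to $0$. The only differences are cosmetic (a general mesh $\delta$ in place of the paper's evenly spaced points $2\pi i/N$, and an explicit remark on $f_\infty(r)=f_\infty(\pi-r)$).
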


\begin{proof}
 We need only prove that the
 $p$ dimensional Hausdorff measure of $S$, denoted
 $\mathcal{H}^p(S)$, is zero for all $p>1$.  
 Since $S=r^{-1}(0)\cup r^{-1}(\pi)$, and
 we have symmetry, we need only show
 \be
 \mathcal{H}^p(r^{-1}(0))=0
 \qquad \forall p>1.
 \ee
 Recall that 
 \be
 r^{-1}(0)=\{(0,0,\phi)\,:\,\phi\in \Sph^1\}\subset \Sph^2\times \Sph^1.
 \ee

For any $p>1$, there exist 
a natural number $m=m_p>1$ 
such that
\be\label{eq:choose-mp}
1+\tfrac{1}{m-1}<p
\ee

For any $\delta<\d_m$,
where $\d_m$ is defined as in Lemma \ref{lem:Distance-in-S},
we choose $N=N_\delta\ge N_m$ 
sufficiently large
that
\be
2\pi/N\le \delta
\ee 
.

Choose a collection of evenly spaced
points 
\be
p_i=(0,0,2\pi i/N) \in r^{-1}(0).
\ee
We can estimate
the Hausdorff measure,
$
\mathcal{H}^p(r^{-1}(0))=
\lim_{\delta\to 0}
\mathcal{H}^p_\delta(S)
$,
using balls of radius $\le \delta$
about these evenly spaced points.

By Lemma \ref{lem:Distance-in-S},
\be\label{eq:r_N-above}
r_N= d_\infty(p_i,p_{i+1})/2
\le (3+\beta)\left(2\pi N^{-1}\right)^{1-\frac{1}{m}}
\ee
Then the
closed $d_\infty$-balls of radius
$r_N$
centered on these points 
cover $r^{-1}(0)$:
\be
r^{-1}(0)\subset \bigcup_{j=1}^{N}
\bar{B}_{p_j}(r_N).
\ee
Thus there is a constant, $C_p$ such that
\be
\mathcal{H}_{\delta}^p(r^{-1}(0))
\le C_p \sum_{i=1}^N r_N^p
=C_p N r_N^p.
\ee 

By our estimate on $r_N$ in (\ref{eq:r_N-above}),
\begin{eqnarray}
\mathcal{H}_{\delta}^p(r^{-1}(0))
&\le&
C_p N (3+\beta)^p \left(2\pi N^{-1}\right)^{(1-\tfrac{1}{m})p}\\
&\le&
C_p (3+\beta)^p
\left(2\pi\right)^{(1-\tfrac{1}{m})p}
N^{1-p+p/m}.
\end{eqnarray}
To estimate the Hausdorff measure we
must take $\delta\to 0$ which requires us to take $N\to \infty$.

We need only show that
\be
\lim_{N\to \infty}
N^{1-p+\tfrac{p}{m}}=0.
\ee
So we need only show
\be
0>1-p+\tfrac{p}{m}.
\ee
Since our choice of
$m=m_p$ at the top
in (\ref{eq:choose-mp}),
is positive, we need only show
\be
0>m-mp+p
\ee
which follows from
$mp-p>m$ and $p(m-1)>m$.
Since our choice of $m=m_p>1$,
we need only show
\be
p>m/(m-1)
\ee
which follows from
our choice of
$m=m_p$ 
in (\ref{eq:choose-mp}).
 Hence $\mathcal{H}^p(S)=0$ for all $p>1$.
\end{proof}

\begin{lem}\label{lem:Hausdorff-infty}
Under the hypotheses of Proposition~\ref{prop:Hausdorff-Dimension},
the singular set, $S$, has Hausdorff dimension one in $(\Sph^2\times \Sph^1,d_\infty)$.   
However
\be\label{eq:unrect}
\mathcal{H}^1_{d_\infty}(r^{-1}(0))=\infty
\textrm{ and }
\mathcal{H}^1_{d_\infty}(r^{-1}(\pi))=\infty.
\ee
In fact the continuous curves,
 $c_r:\Sph^1\to r^{-1}(0)$,
 defined by
 $c_r(t)=(r,0,t)$ for $r=0$
 and for $r=\pi$
 are unrectifiable bijections.
\end{lem}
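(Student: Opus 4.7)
The three assertions split naturally into (i) $\dim_H S = 1$, (ii) $\mathcal{H}^1_{d_\infty}(r^{-1}(0)) = \mathcal{H}^1_{d_\infty}(r^{-1}(\pi)) = \infty$, and (iii) the curves $c_0,c_\pi$ are unrectifiable bijections. The plan is to prove (ii) first, deduce (i) immediately from (ii) and Lemma~\ref{lem:Hausdorff-0}, and then handle (iii) by a parallel length-comparison argument. The central tool is the monotonicity $d_j \le d_\infty$ from Proposition~\ref{prop:ptlim-exists}: since $\diam_{d_j}(U) \le \diam_{d_\infty}(U)$ for every $U \subset M$, every admissible $d_\infty$-cover of a subset $A$ by pieces of diameter at most $\epsilon$ is automatically an admissible $d_j$-cover, and summing the larger $d_\infty$-diameters dominates summing the smaller $d_j$-diameters. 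Passing to the limit in $\epsilon$ would yield
\be
\mathcal{H}^p_{d_\infty}(A)\;\ge\;\mathcal{H}^p_{d_j}(A)\qquad \forall\, A\subset M,\ \forall\, p\ge 0,\ \forall\, j\in\N.
\ee

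For step (ii) I would apply this with $A=r^{-1}(0)$ and $p=1$. In the smooth Riemannian manifold $(M,g_j)$ the fiber $r^{-1}(0)$ is a smoothly embedded closed curve parametrized by $c_0$ from Lemma~\ref{lem-fibres}, and the standard identification of the Hausdorff $1$-measure of a smooth simple closed curve in a Riemannian manifold with its arc-length gives $\mathcal{H}^1_{d_j}(r^{-1}(0))=L_{g_j}(c_0)=2\pi f_j(0)$. Combining with the displayed inequality and letting $j\to\infty$ via Lemma~\ref{lem:warp-unbounded} yields $\mathcal{H}^1_{d_\infty}(r^{-1}(0))=\infty$; the identical argument at the south pole gives $\mathcal{H}^1_{d_\infty}(r^{-1}(\pi))=\infty$. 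Assertion (i) is then immediate: Lemma~\ref{lem:Hausdorff-0} gives $\mathcal{H}^p_{d_\infty}(S)=0$ for every $p>1$, while (ii) forces $\mathcal{H}^1_{d_\infty}(S)\ge \mathcal{H}^1_{d_\infty}(r^{-1}(0))=\infty$, so by definition $\dim_H S=1$.

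For step (iii), the maps $c_0$ and $c_\pi$ are bijections onto $r^{-1}(0)$ and $r^{-1}(\pi)$ respectively because the $\theta$ coordinate collapses at the poles, and they are continuous into $(M,d_\infty)$ by Theorem~\ref{thm:ptwise-homeo}. Unrectifiability would follow from the same monotonicity principle applied at the level of metric lengths rather than Hausdorff measures: for any partition $\{t_i\}$ of $\Sph^1$,
\be
\sum_i d_\infty\!\left(c_r(t_i),c_r(t_{i+1})\right)\;\ge\;\sum_i d_j\!\left(c_r(t_i),c_r(t_{i+1})\right),
\ee
so taking the supremum over partitions gives $L_{d_\infty}(c_r)\ge L_{d_j}(c_r)=L_{g_j}(c_r)=2\pi f_j(r)$ for $r\in\{0,\pi\}$, which tends to $\infty$ as $j\to\infty$ by Lemma~\ref{lem:warp-unbounded}. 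The only nontrivial appeal to external theory is the classical identification of the Hausdorff $1$-measure of a smoothly embedded simple closed curve with its Riemannian arc-length, used once in step (ii); I do not anticipate a real obstacle since everything else reduces to monotonicity of distances and results already established earlier in the paper.
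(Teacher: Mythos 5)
Your proof is correct. The engine is the same as the paper's --- the monotonicity $d_j\le d_\infty$ combined with $L_{g_j}(c_r)=2\pi f_j(0)\to\infty$ from Lemmas~\ref{lem-fibres} and~\ref{lem:warp-unbounded} --- but you organize the deductions in the opposite order. The paper proves unrectifiability of $c_0$ and $c_\pi$ first, by choosing for each $j$ a partition whose $d_j$-partition sum exceeds $\pi f_j(0)$ and bounding the corresponding $d_\infty$-sum from below by it, and only then deduces $\mathcal{H}^1_{d_\infty}(r^{-1}(0))=\infty$ from the classical fact that an injective continuous curve of infinite length has image of infinite $\mathcal{H}^1$-measure. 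You instead prove the measure statement directly, via the comparison $\mathcal{H}^p_{d_\infty}(A)\ge\mathcal{H}^p_{d_j}(A)$ (valid since $d_j\le d_\infty$ shrinks diameters, so $d_\infty$-admissible covers are $d_j$-admissible with smaller diameter sums) together with the identification $\mathcal{H}^1_{d_j}(r^{-1}(0))=2\pi f_j(0)$ in the smooth manifolds, and then give the unrectifiability argument separately; your partition-sum argument for part (iii) is essentially identical to the paper's. Your route has the minor advantage that the Hausdorff-measure conclusion does not pass through the length-equals-$\mathcal{H}^1$ identification in the non-smooth limit space, only in the smooth approximants; both identifications are standard facts about simple arcs in metric spaces, so the two proofs are equally rigorous and of comparable length.
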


\begin{proof}
We know that for $r=0$ and $r=\pi$
that $c_r$ is a bijective homomorphism, by Proposition~\ref{prop:ptwise-homeo}.
By Lemma~\ref{lem-fibres},
we know
\be
L_{g_j}(c_r)=2\pi f_j(0)
\ee
which means $c_r$ is $d_j$-rectifiable
and
\be
2\pi f_j(0)=\sup \sum_{i=1}^N
d_j(c_r(t_{i-1}),c_r(t_i))
\ee
where the supremum is over all
$N\in {\mathbb N}$ and over all
partitions,
\be
P=\{t_0,t_1,...,t_N\}
\ee
such that
\be
0=t_0<t_1<\cdots< t_N=2\pi.
\ee
Thus for each $j\in {\mathbb N}$,
there is $N_j\in {\mathbb N}$,
and a partition, 
\be
P_j=\{t_{j,0},t_{j,1},...,t_{j,N}\}
\ee
such that
\be
\pi f_j(0)< \sum_{i=1}^N
d_j(c_r(t_{j,i-1}),c_r(t_{j,i})).
\ee
Since $d_\infty(p,q)\ge d_j(p,q)$
for all $p,q\in \Sph^2\times\Sph^1$
we have
\be
\pi f_j(0)< \sum_{i=1}^N
d_\infty(c_r(t_{j,i-1}),c_r(t_{j,i})).
\ee

We claim $c_r$ is not $d_\infty$-rectifiable.  Assume on the
contrary that $c_r$
has finite $d_\infty$-rectifiable length, $L_{d_\infty}(c_r)$. Then
it is larger than the sum and we have
\be
\pi f_j(0)<L_{d_\infty}(c_r).
\ee
However $\lim_{j\to \infty}f_j(0)=\infty$, so this is a 
contradiction.

Since $c_0$ and $c_\pi$ are
$d_\infty$-unrectifiable bijections 
their images have unbounded Hausdorff measure of dimension one as claimed
in (\ref{eq:unrect}).
Thus
${\mathcal H}^1(S)=\infty$.
Combining this with Lemma~\ref{lem:Hausdorff-0},
we see that the Hausdorff dimension
of the singular set is $1$.
\end{proof}

We can now prove Proposition~\ref{prop:Hausdorff-Dimension}:

\begin{proof}
This follows immediately from
Lemma~\ref{lem:Hausdorff-0}
and 
Lemma~\ref{lem:Hausdorff-infty}.
\end{proof}

\subsection{Volume of the Limit Space}

We can now find the volume of the limit space:

\begin{prop} \label{prop:Hausdorff-vol-limit}
The metric space $(\Sph^2\times\Sph^1, d_\infty)$ as in Theorem~\ref{thm:ptwise-homeo} has 
volume,
\be
{\mathcal{H}}^3_{d_\infty}(\Sph^2\times \Sph^1, d_\infty)=(2\pi)^2(2\beta+4-2\ln 4).
\ee
\end{prop}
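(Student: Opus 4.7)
The plan is to reduce the computation of $\mathcal{H}^3_{d_\infty}$ to a classical Riemannian volume integral on the smooth open manifold $(\Sph^2\times\Sph^1\setminus S,g_\infty)$ of Example~\ref{ex:limit}, and then to evaluate that integral explicitly.

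First I would invoke Proposition~\ref{prop:Hausdorff-Dimension} to discard the singular set: since the Hausdorff dimension of $S$ in $(\Sph^2\times\Sph^1,d_\infty)$ is $1$, we have $\mathcal{H}^3_{d_\infty}(S)=0$ and hence
\be
\mathcal{H}^3_{d_\infty}(\Sph^2\times\Sph^1)=\mathcal{H}^3_{d_\infty}\bigl((\Sph^2\times\Sph^1)\setminus S\bigr).
\ee
Next, on $M\setminus S$, Proposition~\ref{prop:limsup-lambda} shows that $d_\infty=d_{g_\infty}$, so the metric on the open set agrees with the length metric induced by the smooth Riemannian tensor $g_\infty$. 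Since $g_\infty$ is a smooth Riemannian metric on the open manifold $(\Sph^2\times\Sph^1)\setminus S$, the classical identification of $3$-dimensional Hausdorff measure (with respect to the Riemannian distance) with the Riemannian volume applies locally in coordinate charts; hence
\be
\mathcal{H}^3_{d_\infty}\bigl((\Sph^2\times\Sph^1)\setminus S\bigr)=\vol_{g_\infty}\bigl((\Sph^2\times\Sph^1)\setminus S\bigr).
\ee

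Then I would compute the Riemannian volume directly from the warped product structure. With $g_\infty=dr^2+\sin^2(r)\,d\theta^2+f_\infty^2(r)\,d\varphi^2$, the volume form is $\sin(r)\,f_\infty(r)\,dr\wedge d\theta\wedge d\varphi$, so
\be
\vol_{g_\infty}=\int_0^{2\pi}\!\!\int_0^{2\pi}\!\!\int_0^{\pi}\sin(r)\,f_\infty(r)\,dr\,d\theta\,d\varphi=(2\pi)^2\int_0^{\pi}\sin(r)\bigl(-2\ln\sin r+\beta\bigr)\,dr.
\ee
Since $\int_0^{\pi}\sin r\,dr=2$, the result follows once I evaluate $\int_0^{\pi}\sin r\,\ln\sin r\,dr$. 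By the substitution $u=\cos r$ this reduces to $\int_0^1\ln(1-u^2)\,du=\int_0^1\ln(1-u)\,du+\int_0^1\ln(1+u)\,du=-1+(2\ln 2-1)=2\ln 2-2$, so
\be
(2\pi)^2\int_0^{\pi}\sin(r)\bigl(-2\ln\sin r+\beta\bigr)\,dr=(2\pi)^2\bigl(-2(2\ln 2-2)+2\beta\bigr)=(2\pi)^2(2\beta+4-2\ln 4),
\ee
which is the asserted value.

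The only subtle step is the second one, namely justifying $\mathcal{H}^3_{d_\infty}=\vol_{g_\infty}$ on the open set $M\setminus S$, because $d_\infty$ was defined as a pointwise limit on the larger space $M$ and not \emph{a priori} as a length metric on the open set. Proposition~\ref{prop:limsup-lambda} removes this subtlety by identifying $d_\infty$ with $d_{g_\infty}$ on $M\setminus S$; once that is in hand the equality with the Riemannian volume is standard (it may be verified locally in charts where $g_\infty$ is smooth and the Riemannian distance is bi-Lipschitz to any fixed background Euclidean chart, using the area/coarea-type identity between Hausdorff measure and Riemannian volume). Everything else is a direct integration.
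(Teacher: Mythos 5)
Your proof follows the same route as the paper: discard $S$ using $\mathcal{H}^3_{d_\infty}(S)=0$ from Proposition~\ref{prop:Hausdorff-Dimension}, identify $d_\infty$ with $d_{g_\infty}$ on the complement via Proposition~\ref{prop:limsup-lambda}, equate the Hausdorff measure with the Riemannian volume on the smooth open manifold, and integrate. The only difference is that you evaluate $\int_0^{\pi}\sin r\,\ln\sin r\,dr$ explicitly (correctly), whereas the paper cites Lemma 3.3 of the Sormani--Tian--Wang paper for the value of $\vol_{g_\infty}$.
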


\begin{proof}
Since ${\mathcal H}^3_{d_\infty}(S)=0$
we know
\be
{\mathcal{H}}^3_{d_\infty}(\Sph^2\times \Sph^1, d_\infty)=
{\mathcal{H}}^3_{d_\infty}((\Sph^2\times \Sph^1)\setminus S, d_\infty).
\ee
By Proposition~\ref{prop:limsup-lambda} we have
\be
d_\infty(p,q)=d_{g_\infty}(p,q)
\quad \forall p,q \in (\Sph^2\times \Sph^1)\setminus S.
\ee
so
\be
{\mathcal{H}}^3_{d_\infty}((\Sph^2\times \Sph^1)\setminus S, d_\infty)=
\vol_{g_\infty}((\Sph^2\times \Sph^1)\setminus S).
\ee
By Lemma 3.3 in \cite{STW-Extreme} by Sormani-Tian-Wang which is proven
by integration, we have 
\be
\vol_{g_\infty}((\Sph^2\times \Sph^1)\setminus S)=(2\pi)^2(2\beta+4-2\ln 4).
\ee
\end{proof}

\subsection{Proof of our Main Theorem}

We can now prove Theorem~\ref{thm:completion}
and then apply it to complete the proof 
of Theorem~\ref{Thm:Main}

\begin{proof}[Proof of Theorem~\ref{thm:completion}]
In Theorem~\ref{thm:ptwise-homeo}
we saw that $(\Sph^2\times \Sph^1,d_\infty)$ is homeomorphic to the isometric product, $\Sph^2\times \Sph^1$, and is thus compact.
In Theorem~\ref{thm:GH} we proved
$(\Sph^2\times \Sph^1,d_\infty)$
is the GH limit of $(\Sph^2\times \Sph^1,d_j)$.
In Proposition~\ref{prop:limsup-lambda} we saw that
\be
d_\infty(p,q)=d_{g_\infty}(p,q)
\quad \forall p,q \in (\Sph^2\times \Sph^1)\setminus S.
\ee
In Proposition~\ref{prop:closure} we saw that all $p\in \Sph^2\times\Sph^1$
lie in the closure of $(\Sph^2\times \Sph^1)\setminus S$.   
Thus by Definition~\ref{def:metric completion}
$(\Sph^2\times \Sph^1, d_\infty)$
is the metric completion of
$((\Sph^2\times \Sph^1)\setminus S, d_{g_\infty})$.
The two claims regarding Hausdorff measures follow from
Proposition~\ref{prop:Hausdorff-vol-limit}
and Proposition~\ref{prop:Hausdorff-Dimension}.
\end{proof}

We can now prove Theorem~\ref{Thm:Main} by combining our three theorems:

\begin{proof}[Proof of Theorem~\ref{Thm:Main}]
This is an immediate consequence of Theorem~\ref{thm:ptwise-homeo}, Theorem~\ref{thm:GH},
and Theorem~\ref{thm:completion}
proven above. 
\end{proof}

\section{Intrinsic Flat Convergence}

In this section, we prove the
volume preserving intrinsic flat ($\mathcal{VF}$)
convergence of our sequence.
Intrinsic flat convergence of oriented Riemannian manifolds and more general integral current spaces was introduced by
by Sormani-Wenger in \cite{SW-JDG} using the deep theory of integral currents and Lipschitz tuples on metric spaces developed by Ambrosio-Kirchheim in \cite{AK}.

\begin{thm}\label{thm:VF}
If we view the sequence of Riemannian
manifolds $(\Sph^2\times \Sph^1,g_j)$
of Example~\ref{ex:sequence}
as integral current spaces,
\be
(\Sph^2\times \Sph^1, d_j, [[\Sph^2\times \Sph^1]]),
\ee
then they converge in the volume preserving intrinsic flat sense to 
$
(M_\infty, d_\infty, T_\infty)
$
where $d_\infty$ is the distance on the
Gromov-Hausdorff limit, $(\Sph^2\times \Sph^1,d_\infty)$, and where
\be \label{eq:set-Riem-1}
M_\infty=\set_{d_\infty}(T_\infty)\subset \Sph^2\times \Sph^1 \textrm{ with closure }
\overline{M}_\infty=\Sph^2\times \Sph^1.
\ee
and where
$T_\infty$ is an 
integral current on $(\Sph^2\times \Sph^1,d_\infty)$ such
that $T_\infty=[[\Sph^2\times \Sph^1]]$ viewed
as an integral current on $(M,d_j)$ which means
for any $j \in {\mathbb N}$:
\be
T_\infty(\pi_0,...,\pi_3)=\int_{\Sph^2\times \Sph^1\setminus S}
\pi_0 d\pi_1\wedge\cdots \wedge \pi_3
\ee
for any tuple of Lipschitz functions,
\be
\pi_i: (\Sph^2\times \Sph^1, d_j)\to \mathbb{R}
\ee
where $\pi_0$ is bounded.
\end{thm}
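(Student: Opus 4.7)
The strategy is to invoke a Perales--Sormani monotone-to-intrinsic-flat convergence result from \cite{PS-Monotone}, whose key hypotheses are already verified throughout the paper: monotonicity of $g_j$ (Lemma~\ref{lem:warp-inc}), uniform diameter bound (Proposition~\ref{prop:diamj}), uniform volume bound (Proposition~\ref{prop-vol-sequence}), and Gromov--Hausdorff convergence to a compact limit (Theorem~\ref{thm:GH}).

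First I would construct the candidate limit current $T_\infty$ on $(\Sph^2\times \Sph^1, d_\infty)$. Since Proposition~\ref{prop:limsup-lambda} gives $d_\infty = d_{g_\infty}$ on the smooth open manifold $(\Sph^2\times\Sph^1)\setminus S$, and Proposition~\ref{prop:Hausdorff-Dimension} gives $\mathcal{H}^3_{d_\infty}(S) = 0$, the standard integral current $[[(\Sph^2\times\Sph^1)\setminus S]]$ associated to $g_\infty$ extends to a finite-mass integer-rectifiable current on the compact metric space $(\Sph^2\times\Sph^1, d_\infty)$, whose canonical set is exactly $(\Sph^2\times\Sph^1)\setminus S$ with closure $\Sph^2\times\Sph^1$ by Proposition~\ref{prop:closure}. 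The integration formula is independent of whether one uses $d_j$ or $d_\infty$, because all these distances induce the same smooth structure on $M\setminus S$ and because $d_j \le d_\infty$ guarantees that every $d_\infty$-Lipschitz function is $d_j$-Lipschitz with the same constant.

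The main step is the intrinsic flat distance bound. Following the monotone construction of Perales--Sormani, I would embed all the $(\Sph^2\times\Sph^1, d_j)$ together with $(\Sph^2\times\Sph^1, d_\infty)$ into a common metric space via a mapping-cylinder-type construction (using $d_j \le d_\infty$ to make the identity $1$-Lipschitz), producing distance-preserving maps into a single ambient $Z$. The flat norm of the difference between the pushed-forward currents is then dominated by the mass of a filling built in the tubular neighborhood of $S$, and this filling mass is controlled by the volume difference $\vol_{g_\infty}((\Sph^2\times\Sph^1)\setminus S) - \vol_{g_j}(\Sph^2\times\Sph^1)$ together with a contribution localized near $S$. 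The smooth convergence $g_j \to g_\infty$ on compact subsets of $M\setminus S$ (Proposition~\ref{prop:warp-conv}), combined with $\mathcal{H}^3_{d_\infty}(S)=0$ and the exhaustion by complements of tubular neighborhoods from Lemma~\ref{lem:rho_j}, forces this quantity to zero as $j\to\infty$.

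Finally, the volume-preserving part is immediate: $\mathbf{M}(T_j) = \vol_{g_j}(\Sph^2\times\Sph^1)$ converges monotonically to $\vol_{g_\infty}((\Sph^2\times\Sph^1)\setminus S) = (2\pi)^2(2\beta+4-2\ln 4) = \mathbf{M}(T_\infty)$ by Proposition~\ref{prop:Hausdorff-vol-limit}, giving the $\mathcal{VF}$-convergence from the $\mathcal{F}$-convergence. The hardest part will be the quantitative filling estimate near $S$: one must carefully exploit the monotone framework of \cite{PS-Monotone} to produce an explicit filling of vanishing mass, for which the control on distortion near $S$ provided by $\lambda_j$ in Proposition~\ref{prop:limsup-lambda}, together with the explicit metric tensor structure of Example~\ref{ex:sequence}, should be the essential input.
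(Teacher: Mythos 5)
Your proposal follows the paper's proof exactly: the paper simply verifies the hypotheses of the Perales--Sormani monotone convergence theorem from \cite{PS-Monotone} (monotonicity of $g_j$, the uniform diameter bound, compactness of the pointwise limit already established for the GH convergence in Theorem~\ref{thm:GH}, the volume bound of Proposition~\ref{prop-vol-sequence}, orientability, and the absence of boundary) and invokes that theorem as a black box. Your second and third paragraphs, which sketch the construction of $T_\infty$ and a filling estimate near $S$, are re-deriving the content of the cited theorem rather than anything the paper itself proves (and note one slip there: since $d_j\le d_\infty$, it is $d_j$-Lipschitz functions that are automatically $d_\infty$-Lipschitz with the same constant, not the reverse), but the application itself is correct and identical to the paper's.
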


We begin with a review of
$\mathcal{VF}$ convergence
and integral currents and
then prove key lemmas before
completing the proof of this theorem.

\subsection{A Theorem about Intrinsic Flat Convergence}

Due to limited space, we have no room to review the Geometric Measure Theory needed to truly understand
this part of the paper.   

For those who would like to 
learn enough about the notion of volume preserving intrinsic flat ($\mathcal{VF}$) convergence and integral current spaces to understand the statements of our Theorem~\ref{thm:VF}
and the theorem below, we recommend reading the background in the paper of Perales-Sormani \cite{PS-Monotone}.  
The only result about volume preserving intrinsic flat convergence that we need for this paper is the
following theorem proven by Perales-Sormani in \cite{PS-Monotone}:

\begin{thm}[Perales-Sormani-Monotone-Convergence] \label{thm:Riem}
Given a compact Riemannian manifold, $(M^m,g_0)$,
possibly with boundary, with a monotone increasing sequence of Riemannian 
metric tensors $g_j$ such that
\be
g_j(V,V)\ge g_{j-1}(V,V) \qquad \forall V\in TM
\ee
with uniform bounded diameter,
\be
\diam_{g_j}(M)\le D_0.
\ee
Then the induced length distance functions $d_j: M\times M\to [0,D_0]$
are monotone increasing and converge pointwise to
a distance function,
$d_\infty: M\times M\to [0,D_0]$
so that $(M,d_\infty)$ is
a metric space. 

If the metric space $(M, d_\infty)$ is a compact metric space,
then $d_j\to d_\infty$ uniformly and
\be
(M,d_j) \GHto (M,d_\infty).
\ee

If, in addition, $M$ is an oriented Riemannian manifold 
with uniformly bounded total volume,
\be \label{eq:vol}
\vol_j(M)\le V_0 \textrm{ and }\vol_j(\partial M)\le A_0
\ee
then we have volume preserving intrinsic flat convergence
\be \label{eq:VF-Riem}
(M,d_j, [[M]]) \VFto (M_\infty,d_\infty,T_\infty).
\ee
where 
``$T_\infty=[[\Sph^2\times \Sph^1]]$ viewed
as an integral current structure
on $(\Sph^2\times \Sph^1,d_j)$"
and $M_\infty$ satisfies
\be \label{eq:set-Riem}
M_\infty=\set_{d_\infty}(T_\infty)\subset M \textrm{ with closure }
\overline{M}_\infty=M.
\ee
This means
\be
T_\infty(\pi_0, \pi_1,...,\pi_3)
=\int_{\Sph^2\times \Sph^1}
\pi_0\, d\pi_1\wedge d\pi_2\wedge d\pi_3
\ee
for any tuple of Lipschitz functions
\be\label{eq:Lip-j}
\pi_i: (\Sph^2\times \Sph^1,d_j)\to {\mathbb R}
\ee
with $\pi_0$ bounded.   
\end{thm}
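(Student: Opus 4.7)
The plan is to prove the theorem in stages corresponding to the three conclusions: pointwise convergence to a metric, uniform/GH convergence, and the $\mathcal{VF}$ statement.

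First I would handle the pointwise convergence.  For any piecewise $C^1$ curve $c$ the integrand $\sqrt{g_j(\dot c,\dot c)}$ is monotone increasing in $j$, so $L_{g_j}(c)\le L_{g_{j+1}}(c)$; taking infima over curves between fixed endpoints gives $d_j(p,q)\le d_{j+1}(p,q)$.  Combined with the uniform bound $d_j(p,q)\le\diam_{g_j}(M)\le D_0$, the monotone convergence theorem for real sequences produces a pointwise limit $d_\infty:M\times M\to[0,D_0]$.  The symmetry and triangle inequality for $d_\infty$ pass to the limit from each $d_j$.  Definiteness is the one subtle point: if $p\neq q$ then $d_\infty(p,q)\ge d_1(p,q)>0$ because $(M,g_1)$ is a smooth Riemannian manifold, so $d_\infty$ separates points.

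Second, under the assumption that $(M,d_\infty)$ is compact I would obtain uniform convergence via Dini's theorem.  The inequalities
\begin{equation}
|d_j(p_1,q_1)-d_j(p_2,q_2)|\le d_j(p_1,p_2)+d_j(q_1,q_2)\le d_\infty(p_1,p_2)+d_\infty(q_1,q_2)
\end{equation}
show each $d_j$ is continuous on the compact product $(M,d_\infty)\times(M,d_\infty)$, and $d_\infty$ is obviously continuous there.  Since $\{d_j\}$ is a monotone sequence of continuous functions converging pointwise to a continuous function on a compact space, Dini gives uniform convergence.  GH convergence then follows from the standard observation that if $\sup_{p,q}|d_j(p,q)-d_\infty(p,q)|<\varepsilon$ then the identity map $(M,d_\infty)\to(M,d_j)$ is an $\varepsilon$-isometry with $\varepsilon$-dense image, which forces $d_{GH}((M,d_j),(M,d_\infty))\le\varepsilon$.

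Third, for the $\mathcal{VF}$ statement I would first define $T_\infty$ by the displayed integral formula on $m$-tuples of $d_j$-Lipschitz functions $(\pi_0,\ldots,\pi_m)$ with $\pi_0$ bounded; this is well-posed because any $d_j$-Lipschitz function is automatically $d_\infty$-Lipschitz with the same constant (as $d_j\le d_\infty$) and because $[[M]]$ is a classical integer rectifiable current on the smooth $M$, so Rademacher differentiability in the fixed smooth structure makes the integrand measurable and the volume bound $\vol_j(M)\le V_0$ makes it integrable.  I would verify the Ambrosio-Kirchheim axioms (multilinearity, continuity under pointwise convergence of Lipschitz tuples, locality, finite mass, boundary having finite mass via $\vol_j(\partial M)\le A_0$) by pulling them back from $[[M]]$ on the smooth manifold, and identify $\set_{d_\infty}(T_\infty)$ as the set of positive lower-density points, proving $M_\infty\subset M$ with $\overline{M}_\infty=M$ using the fact that the smooth interior contributes full $d_\infty$-density in a neighborhood of every non-collapsed point.

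The main obstacle is showing $d_{\mathcal{F}}\bigl((M,d_j,[[M]]),(M_\infty,d_\infty,T_\infty)\bigr)\to 0$.  My plan here is to embed both current spaces into the same ambient metric space $Z_j=M\times[0,\varepsilon_j]$, where $\varepsilon_j:=\sup_{p,q}|d_\infty(p,q)-d_j(p,q)|\to 0$ by Dini, and where $Z_j$ carries the metric $\bar d_j$ that restricts to $d_j$ on $M\times\{0\}$, to $d_\infty$ on $M\times\{\varepsilon_j\}$, and is defined as the induced length metric of a natural interpolation on the product so that the inclusions $\iota_0:(M,d_j)\hookrightarrow Z_j$ and $\iota_1:(M,d_\infty)\hookrightarrow Z_j$ are distance preserving (this requires checking that the interpolating metric is consistent with the uniform gap $\varepsilon_j$, which is the main technical bookkeeping).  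Then the cylinder current $[[M\times[0,\varepsilon_j]]]$ has mass bounded by $\varepsilon_j(V_0+A_0)$ and its boundary is $\iota_{1\#}T_\infty-\iota_{0\#}[[M]]$ up to lateral terms controlled by $\vol_j(\partial M)\le A_0$, giving
\begin{equation}
d_{\mathcal{F}}\bigl((M,d_j,[[M]]),(M_\infty,d_\infty,T_\infty)\bigr)\le\varepsilon_j(V_0+A_0)\to 0.
\end{equation}
Volume preservation then reduces to $\mathbf{M}_{d_j}([[M]])=\vol_{g_j}(M)\nearrow\vol_{g_\infty}(M)=\mathbf{M}_{d_\infty}(T_\infty)$, where the equality on the right uses that the monotone limit of the volume forms is the $g_\infty$-volume form on the smooth locus and that the singular set carries no $T_\infty$-mass by the density characterization of $\set_{d_\infty}(T_\infty)$.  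Handling this bridge construction carefully enough that both inclusions are genuinely distance preserving and that the slice masses match on the nose is where I expect the essential work to live.
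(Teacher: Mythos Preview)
This theorem is not proven in the paper at all. It is quoted from Perales--Sormani \cite{PS-Monotone} and used as a black box; the paper says explicitly that ``the only result about volume preserving intrinsic flat convergence that we need for this paper is the following theorem proven by Perales--Sormani in \cite{PS-Monotone}'' and, just after, ``If one accepts Perales-Sormani-Monotone-Convergence Theorem as a black box, then you can still read the proof below.'' So there is no in-paper argument against which to compare your proposal.

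Assessing your outline on its own terms: your first two stages are correct and standard. Monotonicity of $g_j$ gives monotonicity of lengths and hence of $d_j$, the diameter bound gives a finite pointwise limit, definiteness comes from $d_\infty\ge d_1$, and the Dini argument on the compact product $(M,d_\infty)\times(M,d_\infty)$ is exactly the right way to upgrade pointwise to uniform convergence; uniform convergence then gives GH convergence via the identity as an $\varepsilon$-isometry. This matches what one finds in \cite{PS-Monotone}.

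For the $\mathcal{VF}$ stage your cylinder/bridge sketch is a reasonable plan and you honestly flag the hard step (arranging an interpolating metric on $M\times[0,\varepsilon_j]$ so that both end inclusions are genuinely distance preserving while the cylinder mass is $O(\varepsilon_j)$). One point to tighten: in the volume-preservation step you write $\vol_{g_j}(M)\nearrow\vol_{g_\infty}(M)=\mathbf{M}_{d_\infty}(T_\infty)$, but the hypotheses do not give you a limiting tensor $g_\infty$, and even defining it as the pointwise monotone limit you must argue separately that the Ambrosio--Kirchheim mass of $T_\infty$ with respect to $d_\infty$ coincides with $\lim_j\vol_{g_j}(M)$. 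That identification is not automatic from the integral formula you wrote down, since the formula is stated for $d_j$-Lipschitz tuples while mass in $(M,d_\infty)$ is a supremum over $d_\infty$-Lipschitz tuples, a strictly larger class.
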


Note that we have already applied the top part of this theorem, which we called the Perales-Sormani Monotone to GH Theorem (stated as Theorem~\ref{thm:Riem-short} above) to prove GH convergence of our sequence.   We avoided the full statement then because intrinsic flat convergence part is somewhat complicated, and we wanted to ensure the first part of the paper was easy to ready for those only interested only in GH limits.   

This section is more difficult but can be followed easily after reading the background section of \cite{PS-Monotone} by Perales-Sormani.   One might even read that entire paper first to truly understand what is proven here.   If one accepts Perales-Sormani-Monotone-Convergence Theorem as a black box, then you can still read the proof below.

\subsection{Proof of $\mathcal{VF}$ Convergence to the Extreme Limit Space}

In this section we will prove
the intrinsic flat convergence.

\begin{proof}[Proof of Theorem~\ref{thm:VF}]
We have already proven every hypothesis in the first paragraph of the Perales-Sormani Monotone Convergence Theorem (stated as
Theorem~\ref{thm:Riem} above)
to prove our Gromov-Hausdorff convergence in Theorem~\ref{thm:GH}.  Sormani-Tian-Wang proved a uniform upper bound on $\vol_{g_j}(\Sph^2\times\Sph^1)$
in \cite{STW-Extreme} (which we reviewed in Proposition~\ref{prop-vol-sequence}).  Our manifold,
$\Sph^2\times\Sph^1$, has no boundary.  Thus, we have all the hypotheses of Theorem~\ref{thm:Riem}.  So we conclude volume preserving intrinsic flat convergence to 
\be
(M_\infty, d_\infty, T_\infty)
\ee
where
$M_\infty$ satisfies (\ref{eq:set-Riem}) and
``$T_\infty=[[\Sph^2\times \Sph^1]]$ viewed
as an integral current structure
on $(\Sph^2\times \Sph^1,d_j)$". 
\end{proof}

%%%%%%%%%%

\section{Testing Notions of Nonnegative Scalar Curvature}
\label{Sect-Open}

Now that we have a strong understanding of the metric properties of our extreme limit space, this space can be used to test various proposed notions of generalized nonnegative scalar curvature.  Here we suggest two particular notions to check, but others may be checked on this extreme example as well.  

\begin{rmrk} It would be interesting to explore whether our extreme limit space of Example~\ref{ex:limit} has nonnegative scalar curvature in the sense of prism rigidity as studied by Gromov and Chao Li in 
\cite{Gromov-Dirac} and
\cite{li2019positive}.   Since it is known that the extreme limit space is not flat anywhere, one would need only show there are no prisms satisfying the hypotheses of Gromov's prism property.   Note that it has already been shown there are no such prisms in the sequence of Example~\ref{ex:sequence} because they have positive scalar curvature, so this might be helpful towards completing a proof.   The fact that we have $C^0$ convergence on some regions should also be helpful.
\end{rmrk}

\begin{rmrk}
It would be interesting to see if one could define a Ricci flow starting from our extreme limit space of Example~\ref{ex:limit} and see if it flows 
into smooth spaces of positive scalar curvature.   One way to do this might be to find the Ricci flow of the
sequence of Riemannian manifolds in Example~\ref{ex:sequence} which definitely flow into smooth manifolds of positive scalar curvature.   Then try to take a limit of these flows and show they converge smoothly for $t>0$ even though
they only converge in the GH or SWIF or uniform sense as $t\to 0$.   If this works, then our extreme limit space has
generalized nonnegative scalar curvature in the sense defined by Bamler and Burkhardt-Guim in \cite{Bamler-Gromov} and \cite{Burkhardt_Guim_2019}
\end{rmrk}

%%%%%%%%%%%

\bibliographystyle{plain}
\bibliography{2023-Sormani.bib}

\end{document}